\numberwithin{equation}{section}
\newtheorem{theorem}[subsection]{Theorem}
\newtheorem{corollary}[subsection]{Corollary}
\newtheorem{lemma}[subsection]{Lemma}
\newtheorem{proposition}[subsection]{Proposition}
\theoremstyle{definition}
\newtheorem{definition}[subsection]{Definition}
\newtheorem{remark}[subsection]{Remark}
\newcommand{\cC}{\mathcal{C}}
\newcommand{\cD}{\mathcal{D}}
\newcommand{\cE}{\mathcal{E}}
\newcommand{\cI}{\mathcal{I}}
\newcommand{\cM}{\mathcal{M}}
\newcommand{\cO}{\mathcal{O}}
\newcommand{\cP}{\mathcal{P}}
\newcommand{\cS}{\mathcal{S}}
\newcommand{\sset}{\mathrm{sSet}}
\newcommand{\ssetI}{\mathrm{sSet}^{\cI}}
\newcommand{\Ho}{\mathrm{Ho}}
\DeclareMathOperator{\hocolim}{hocolim}
\DeclareMathOperator{\colim}{colim}
\DeclareMathOperator{\const}{const}
\DeclareMathOperator{\Map}{Map}
\newcommand{\ot}{\leftarrow}
\newcommand{\iso}{\cong}
\newcommand{\bld}[1]{{\mathbf{#1}}}
\newcommand{\id}{{\mathrm{id}}}
\newcommand{\Fun}{{\mathrm{Fun}}}
\newcommand{\SMC}{\mathcal{S}{\mathrm{ym}\mathcal{M}\mathrm{on}\mathcal{C}\mathrm{at}_\infty}}
\newcommand{\Cat}{\mathcal{C}\mathrm{at}}
\newcommand{\Fr}{\mathrm{Fr}}
\newcommand{\arxivlink}[1]{\href{http://arxiv.org/abs/#1}{\texttt{arXiv:#1}}}
\title[Presentably symmetric monoidal
\texorpdfstring{$\infty$}{infinity}-categories]{Presentably symmetric
  monoidal \texorpdfstring{$\infty$}{infinity}-categories are
  represented by symmetric monoidal model categories}
\author{Thomas Nikolaus}
\address{Max Planck Institute for Mathematics,
Vivatsgasse 7,
53111 Bonn,
Germany}
\email{thoni@mpim-bonn.mpg.de}
\author{Steffen Sagave} \address{IMAPP, Radboud University Nijmegen, PO Box 9010, 6500 GL Nijmegen, The Netherlands} \email{s.sagave@math.ru.nl}
\date{\today}
\begin{document}
\begin{abstract}
We prove the theorem stated in the title. More precisely, we show the stronger statement that every symmetric monoidal left adjoint functor between presentably symmetric monoidal \texorpdfstring{$\infty$}{infinity}-categories is represented by a strong symmetric monoidal left Quillen functor between simplicial, combinatorial and left proper symmetric monoidal model categories.
\end{abstract}

\subjclass[2010]{55U35; 18G55, 18D10}
\keywords{infinity-category, quasi-category, symmetric monoidal model category}
\maketitle
%\ifdraft{\linenumbers}{} % Creates line numbers with the lineno package.

%%%%%%%%%%%%%%%%%%%%%%%%%%%%%%%%%%%%%%%%%%%%%%%%%%%%%%%%%%%%%%%%%%%%%%%%%%%%%
\section{Introduction}
%%%%%%%%%%%%%%%%%%%%%%%%%%%%%%%%%%%%%%%%%%%%%%%%%%%%%%%%%%%%%%%%%%%%%%%%%%%%
The theory of $\infty$-categories has in recent years become a powerful tool for studying questions in homotopy theory and other branches of mathematics. It complements the older theory of Quillen model categories, and in many application the interplay between the two concepts turns out to be crucial.  In an important class of examples, the relation between $\infty$-categories and model categories is by now completely understood, thanks to work of Lurie \cite[Appendix A.3]{Lurie_HTT} and Joyal \cite{Joyal-quasi-categories} based on earlier results by Dugger \cite{Dugger_combinatorial}: On the one hand, every combinatorial simplicial model category~$\cM$ has an underlying $\infty$-category $\cM_\infty$. This $\infty$-category $\cM_\infty$ is \emph{presentable}, i.e., it satisfies the set theoretic smallness condition of being accessible and has all $\infty$-categorical colimits and limits. On the other hand, every presentable $\infty$-category is equivalent to the $\infty$-category associated with a combinatorial simplicial model category~\cite[Proposition A.3.7.6]{Lurie_HTT}. The presentability assumption is essential here since a sub $\infty$-category of a presentable $\infty$-category is in general not presentable, and does not come from a model category.

In many applications one studies model categories $\cM$ equipped with a symmetric monoidal product that is compatible with the model structure. The underlying $\infty$-category $\cM_\infty$ of such a \emph{symmetric monoidal model category} inherits the extra structure of a \emph{symmetric monoidal $\infty$-category}~\cite[Example 4.1.3.6 and Proposition 4.1.3.10]{Lurie_HA}. Since the monoidal product of $\cM$ is a Quillen bifunctor, $\cM_{\infty}$ is an example of a \emph{presentably symmetric monoidal $\infty$-category}, i.e., a symmetric monoidal $\infty$-category $\cC$ which is presentable and whose associated tensor bifunctor $\otimes\colon \cC \times \cC \to \cC$ preserves colimits separately in each variable. In view of the above discussion, it is an obvious question whether every presentably symmetric monoidal $\infty$-category arises from a combinatorial symmetric monoidal model category. This was asked for example by Lurie~\cite[Remark 4.5.4.9]{Lurie_HA}. The main result of the present paper is an affirmative answer to this question: 

\begin{theorem}\label{main_thm} For every presentably symmetric monoidal $\infty$-category $\cC$, there is a simplicial, combinatorial and left proper symmetric monoidal model category $\cM$ whose underlying symmetric monoidal $\infty$-category is equivalent to $\cC$.
\end{theorem}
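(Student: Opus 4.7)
The plan is to realize $\cC$ as a symmetric monoidal left Bousfield localization of a Day-convolution model structure on a category of simplicial presheaves. I would proceed in three stages.

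First, I would reduce to the case of Day-convolution presheaves. Since $\cC$ is presentable and $\otimes$ commutes with colimits separately in each variable, one can choose a small full sub-$\infty$-category $\cC_0 \subseteq \cC$ that contains the monoidal unit, is closed under $\otimes$, and generates $\cC$ under $\infty$-categorical colimits. Equipping the presheaf $\infty$-category $P(\cC_0)$ with Day convolution makes it a presentably symmetric monoidal $\infty$-category, and the Yoneda extension of the inclusion $\cC_0 \hookrightarrow \cC$ is a symmetric monoidal left adjoint $P(\cC_0) \to \cC$ exhibiting $\cC$ as an accessible symmetric monoidal localization of $P(\cC_0)$.

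Next, I would rectify the small symmetric monoidal $\infty$-category $\cC_0$ to an honest small simplicially enriched strictly symmetric monoidal (permutative) category $S$. Given such an $S$, the projective model structure on $\sset^{S^{\op}}$ together with the Day convolution monoidal product is a simplicial, combinatorial, left proper, symmetric monoidal model category whose underlying symmetric monoidal $\infty$-category is $P(\cC_0)$. Carrying out this rectification is the main technical heart of the argument. I would attack it by interpreting the coherent $E_\infty$-structure on the mapping spaces of $\cC_0$ as a diagram over the category $\cI$ of finite sets and injections, and then invoking a rectification theorem for $\cI$-indexed diagrams (in the spirit of the $\ssetI$ machinery suggested by the paper's notation) to replace it by a strictly commutative model.

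Finally, I would realize the localization $P(\cC_0) \to \cC$ at the model-categorical level by performing a left Bousfield localization of $\sset^{S^{\op}}$ at a set of morphisms. To guarantee that the result is again symmetric monoidal, this set must be closed under tensoring with a set of cofibrant generators; this ensures that the class of local equivalences is monoidal, so that the resulting Bousfield localization remains a simplicial, combinatorial, left proper symmetric monoidal model category. A concluding comparison identifies its underlying symmetric monoidal $\infty$-category with $\cC$.

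I expect the rectification step to be the main obstacle: producing a strictly symmetric monoidal simplicial category whose associated symmetric monoidal $\infty$-category is equivalent to $\cC_0$, in a way compatible with Day convolution and with the subsequent monoidal Bousfield localization. The remaining steps should be a fairly routine extension of the existing monoidal Bousfield localization machinery to the Day-convolution setting.
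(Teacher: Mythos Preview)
Your Steps~1 and~3 match the paper's Propositions~\ref{prop_eins} and~\ref{prop_zwei} essentially verbatim: reduce to $\cP(\cD)$ with Day convolution for a small symmetric monoidal $\infty$-category $\cD$, then perform a monoidal left Bousfield localization. The divergence is entirely in Step~2, and there your proposed implementation has a genuine gap.

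You aim to rectify $\cD$ to a permutative \emph{simplicially enriched} category $S$ and then use enriched presheaves $\sset^{S^{\op}}$ with Day convolution. But the $\ssetI$-machinery you invoke does not output such an object: it takes an $E_\infty$-algebra $M$ in $(\sset_{\mathrm{Joyal}},\times)$ and returns a strictly commutative monoid $M^{\mathrm{rig}}$ in $(\ssetI,\boxtimes)$, i.e.\ an $\cI$-indexed diagram of simplicial sets with a commutative $\boxtimes$-multiplication. This is not an enriched category, and there is no $S^{\op}$ to form presheaves on. The paper sidesteps this entirely by never passing to enriched presheaves. Instead it uses slice categories: the contravariant model structure on $\sset/M$ already models $\cP(\cD)$ via straightening/unstraightening but is not symmetric monoidal; the paper then constructs a new \emph{positive contravariant $\cI$-model structure} on $\ssetI/M^{\mathrm{rig}}$ and a zig-zag of Quillen equivalences linking it to $\sset/M$. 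The slice $\ssetI/M^{\mathrm{rig}}$ inherits the product $(X\to M^{\mathrm{rig}})\boxtimes(Y\to M^{\mathrm{rig}}) = (X\boxtimes Y \to M^{\mathrm{rig}}\boxtimes M^{\mathrm{rig}} \to M^{\mathrm{rig}})$ precisely because $M^{\mathrm{rig}}$ is strictly commutative, and verifying the pushout-product axiom for this slice model structure is itself a nontrivial piece of work (Proposition~\ref{prop:X-boxtimes-preserves} and Corollary~\ref{cor:contravariant-I-monoidal}).

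There is a further subtlety you do not anticipate. Even after one has produced a symmetric monoidal model category whose underlying $\infty$-category is $\cP(\cD)$, it is not automatic that the induced symmetric monoidal structure on $\cP(\cD)$ agrees with Day convolution: a priori one only knows the two monoidal structures live on equivalent underlying $\infty$-categories. The paper handles this by showing that representables are closed under the derived $\boxtimes$ (Lemma~\ref{lemma_rep}) and then proving a uniqueness statement (Lemma~\ref{lastlemmahere}) for endofunctors of $\SMC$ lying over the identity on $\Cat_\infty$, which forces the restriction $\cD \to \Psi(\cD)$ to be symmetric monoidal and hence identifies the two structures via the universal property of Day convolution. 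Your outline would need an analogous argument even if the rectification to a permutative simplicial category could be carried out.
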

One can view this as a rectification result: The a priori weaker and more flexible notion of a symmetric monoidal $\infty$-category, which can encompass coherence data on all layers, can be rectified to a symmetric monoidal category where only coherence data up to degree 2 is allowed. An analogous result in the monoidal (but not symmetric monoidal) case is outlined in~\cite[Remark 4.1.4.9]{Lurie_HA}. The symmetric result is significantly more complicated, as it is generally harder to rectify to a commutative structure than to an associative one. As we will see in Section~\ref{subsec:functoriality} below, the theorem can actually be strengthened to a functorial version stating that symmetric monoidal left adjoint functors are represented by strong symmetric monoidal left Quillen functors.

The strategy of proof for Theorem~\ref{main_thm} is as follows. Using
localization techniques, we reduce the statement to the case of
presheaf categories. By a result appearing in work of
Pavlov--Scholbach~\cite{Pavlov-Scholbach-admissibility}, we can
represent a symmetric monoidal $\infty$-category by an $E_{\infty}$
algebra $M$ in simplicial sets with the Joyal model structure. The
main result of Kodjabachev--Sagave~\cite{Kodjabachev-S_Joyal-I} implies that this
$E_{\infty}$ algebra can be rigidified to a strictly commutative
monoid in the category of diagrams of simplicial sets indexed by
finite sets and injections. We construct a chain of Quillen
equivalences relating the contravariant model structure on $\sset/M$
with a suitable contravariant model structure on objects over the
commutative rigidification of $M$.  The last step provides a symmetric
monoidal model category, and employing a result by
Gepner--Groth--Nikolaus~\cite{GGN} we show that it models the
symmetric monoidal $\infty$-category of presheaves on $M$.

It is also worth noting that our proof of Theorem~\ref{main_thm} does
in fact provide a symmetric monoidal model category $\cM$ with
favorable properties: operad algebras in $\cM$ inherit a model
structure from $\cM$, and weak equivalences of operads induces Quillen
equivalences between the categories of operad algebras; see Theorem~\ref{thm:favorable-properties-of-M} below. In
particular, there is a model structure on the category of commutative
monoid objects in $\cM$ which is Quillen equivalent to the lifted model
structure on $E_{\infty}$ objects in $\cM$ and moreover models the $\infty$-category of commutative algebras in the $\infty$-category represented by $\cM$.  Hence formally $\cM$ behaves very
much like symmetric spectra with the positive model structure.

\subsection{Applications} Our main result allows to abstractly deduce the existence of symmetric monoidal model categories that represent homotopy theories with only homotopy coherent symmetric monoidal structures. For example, it was unknown for a long time if there is a good point set level model for the smash product on the stable homotopy category.  Since a presentably symmetric monoidal $\infty$-category that models the stable homotopy category can be established without referring to such a point set level model for the smash product, the existence of a model category of spectra with good smash product follows from our result. (Explicit constructions of such model categories of course predate the notion of presentably symmetric monoidal $\infty$-categories.)

But there are also examples where the question about the existence of symmetric monoidal models is open. One such example is the category of topological operads. It admits a tensor product, called the \emph{Boardman--Vogt tensor product}, which controls the interchange of algebraic structures. The known symmetric monoidal point set level models for this tensor product cannot be derived, i.e., they do not give rise to a symmetric monoidal model category. However, for the underlying $\infty$-category of $\infty$-operads a presentably symmetric monoidal product is constructed by Lurie~\cite[Chapter~2.2.5]{Lurie_HA}. In this case, our result allows to abstractly deduce the existence of a symmetric monoidal model category modeling operads with the Boardman--Vogt tensor product.

\subsection{Organization} 
In Section~\ref{sec_two} we show that Theorem~\ref{main_thm} and its
functorial enhancement can be reduced to the case of presheaf
categories.  In Section~\ref{sec:contravariant-I} we develop variants
of the contravariant model structure that are compatible with the
rigidification for $E_{\infty}$ quasi-categories recently developed by
Kodjabachev--Sagave~\cite{Kodjabachev-S_Joyal-I}. In the final
Section~\ref{sec:E-infty} we prove that an instance of the
contravariant model structure provides the desired result about
presheaf categories.

% In Section~\ref{sec_two} we show that Theorem~\ref{main_thm} and its
% functorial enhancement can be reduced to the case of presheaf
% categories.  In Section~\ref{sec:contravariant-I} we develop variants
% of the contravariant model structure that are compatible with the
% rigidification for $E_{\infty}$ quasi-categories recently developed by
% Kodjabachev--Sagave~\cite{Kodjabachev-S_Joyal-I}. In the final Section~\ref{sec:E-infty} we prove that an instance of the contravariant model structure provides the desired result about presheaf categories.

\subsection{Acknowledgments} We would like to thank Gijs Heuts, Dmitri Pavlov and Markus Spitzweck for helpful discussions. Moreover, we would like to thank the referee for useful comments.

%%%%%%%%%%%%%%%%%%%%%%%%%%%%%%%%%%%%%%%%%%%%%%%%%%%%%%%%%%%%%%%%%%%%%%%%%%%%%%%%%
\section{Reduction to presheaf categories}\label{sec_two}
%%%%%%%%%%%%%%%%%%%%%%%%%%%%%%%%%%%%%%%%%%%%%%%%%%%%%%%%%%%%%%%%%%%%%%%%%%%%%%%%%
In this section we explain how Theorem~\ref{main_thm} follows from a statement about presheaf categories that will be established in Section~\ref{sec:contravariant-I}.

As defined by Lurie in \cite[Definition 2.0.0.7]{Lurie_HA}, a symmetric monoidal $\infty$-category is a cocartesian fibration of simplicial sets $\cC^{\otimes} \to N(\mathcal{F}in_*)$ satisfying a certain condition. We explain in Proposition~\ref{prop:SMC-as-localization} below that a symmetric monoidal $\infty$-category can be represented by an $E_{\infty}$ algebra in simplicial sets with the Joyal model structure. We also note that by \cite[Example 4.1.3.6]{Lurie_HA}, every symmetric monoidal model category gives rise to a symmetric monoidal $\infty$-category, and every symmetric monoidal left Quillen functor induces a left adjoint symmetric monoidal functor between the respective $\infty$-categories.

Recall that an $\infty$-category $\cC$ is called \emph{presentable} if it is $\kappa$-accessible for some regular cardinal $\kappa$ and admits all small colimits. In that case we can write $\cC$ as an accessible localization of the category of presheaves $\cP(\cC^ \kappa)$ on the full subcategory $\cC^\kappa \subset \cC$ of $\kappa$-compact objects.  Here we denote the category of presheaves on an $\infty$-category $\cD$ as $\cP(\cD) = \Fun(\cD^{op}, \cS)$ where $\cS = N(\mathrm{Kan}^\Delta)$ is the $\infty$-category of spaces obtained as the homotopy coherent nerve of the simplicially enriched category of Kan complexes.  Moreover $\cC^\kappa$ is essentially small. Replacing $\cC^\kappa$ by a small $\infty$-category $\cD$ we see that every presentable $\infty$-category is equivalent to an accessible localization of the category of presheaves $\cP(\cD)$ on some small $\infty$-category $\cD$. For a detailed discussion of presentable $\infty$-categories and accessible localizations we refer the reader to~\cite[Chapter 5.5]{Lurie_HTT}.

To study a symmetric monoidal analogue of this statement, we recall the following terminology from the introduction. % We follow Lurie \cite[Definition 2.0.0.7]{Lurie_HA} in the notion and the terminology concerning symmetric monoidal $\infty$-categories. 
\begin{definition}
A symmetric monoidal $\infty$-category $\cC$ is \emph{presentably symmetric monoidal} if $\cC$ is presentable and the associated tensor bifunctor $\otimes\colon \cC \times \cC \to \cC$ preserves colimits separately in each variable.
\end{definition}

For every symmetric monoidal structure on an
$\infty$-category $\cD$, the $\infty$-category $\cP(\cD)$ inherits a
symmetric monoidal structure which
by~\cite[Corollary~4.8.1.12]{Lurie_HA} is uniquely determined by the
following two properties:
\begin{itemize}
 \item The tensor product makes $\cP(\cD)$ into a presentably symmetric monoidal $\infty$-category.
 \item The Yoneda embedding $j\colon \cD \to \cP(\cD)$ can be extended to a symmetric monoidal functor.
\end{itemize}
We call this structure the \emph{Day convolution symmetric monoidal structure}. It follows from \cite[4.8.1.10(4)]{Lurie_HA} that it has the following universal property: for every presentably symmetric monoidal $\infty$-category $\cE$, the Yoneda embedding ${j\colon \cD \to \cP(\cD)}$ induces an equivalence
\begin{equation*}
\Fun^{L,\otimes}(\cP(\cD), \cE) \to \Fun^\otimes(\cD,\cE).
\end{equation*}
Here $\Fun^\otimes$ denotes the $\infty$-category of symmetric monoidal functors and $\Fun^{L,\otimes}$ denotes the $\infty$-category of functors which are symmetric monoidal and in addition preserve all small colimits 
(or equivalently, which are left adjoint).

In order to state our first structure result for presentably symmetric monoidal $\infty$-categories, let us recall the notion of a symmetric monoidal localization of a symmetric monoidal $\infty$-category $\cC$.  An accessible localization $L\colon \cC \to \cC$ is called \emph{symmetric monoidal} if the full subcategory of local object $\cC^{0} \subseteq \cC$ admits a presentably symmetric monoidal structure such that the induced localization functor $L\colon \cC \to \cC^0$ admits a symmetric monoidal structure. In that case these symmetric monoidal structures are essentially unique. By \cite[Proposition~2.2.1.9]{Lurie_HA}, the localization $L$ is symmetric monoidal precisely if for every local equivalence $X \to Y$ in $\cC$ and every object $Z \in \cC$ the induced morphism $X \otimes Z \to Y \otimes Z$ is also a local equivalence. Note that this condition can be completely checked on the level of homotopy categories. See also \cite[Section 3]{GGN} for a discussion of symmetric monoidal localizations.

\begin{proposition}\label{prop_eins}
Every presentably symmetric monoidal $\infty$-category is an accessible, symmetric monoidal localization of the category of presheaves $\cP(\cD)$ on some small, symmetric monoidal $\infty$-category $\cD$. 
\end{proposition}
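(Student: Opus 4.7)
The plan is to take $\cD$ to be the full subcategory $\cC^{\kappa}\subseteq\cC$ of $\kappa$-compact objects for a sufficiently large regular cardinal $\kappa$, equip it with the restricted symmetric monoidal structure, and invoke the universal property of Day convolution recalled just above.

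The first and only technical step is to choose $\kappa$ so that $\cC$ is $\kappa$-accessible, the unit $\mathbf{1}\in\cC$ is $\kappa$-compact, and the subcategory $\cC^{\kappa}$ is closed under the tensor bifunctor. The existence of such $\kappa$ is a standard consequence of the theory of accessible functors: the bifunctor $\otimes\colon\cC\times\cC\to\cC$ preserves colimits in each variable, hence is accessible, and an accessible functor between presentable $\infty$-categories sends $\kappa$-compact objects to $\kappa$-compact objects for all sufficiently large $\kappa$ (see \cite[5.4.7]{Lurie_HTT}). With such a $\kappa$ fixed, set $\cD=\cC^{\kappa}$; it inherits a symmetric monoidal structure and the inclusion $\iota\colon\cD\hookrightarrow\cC$ is a symmetric monoidal functor into the presentably symmetric monoidal $\infty$-category $\cC$.

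Next, apply the universal property of the Day convolution structure to $\iota$: this yields an essentially unique symmetric monoidal left adjoint
\[
L\colon \cP(\cD)\longrightarrow\cC
\]
whose restriction along the Yoneda embedding $j\colon\cD\to\cP(\cD)$ is $\iota$. Let $R$ be its right adjoint. To conclude that $L$ exhibits $\cC$ as an accessible localization of $\cP(\cD)$, it remains to verify that $R$ is fully faithful, i.e.\ that the counit $LR\to\id_{\cC}$ is an equivalence. For $c\in\cC$, the presheaf $R(c)$ is $d\mapsto\operatorname{Map}_{\cC}(d,c)$; by the co-Yoneda lemma it is the colimit of representables indexed by the slice $\cD_{/c}$. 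Since $L$ preserves colimits and sends representables to their images in $\cC$, one computes $LR(c)\simeq \colim_{d\to c}d$, and this colimit is $c$ because $\cC$ is $\kappa$-accessible and hence generated under $\kappa$-filtered colimits by $\cD=\cC^{\kappa}$. Accessibility of the localization is then automatic, as $R$ is a right adjoint between presentable $\infty$-categories.

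Finally, the localization is symmetric monoidal: $L$ has a symmetric monoidal structure by construction, so if $f\colon X\to Y$ is inverted by $L$ then for any $Z\in\cP(\cD)$ the morphism $L(f\otimes\id_{Z})\simeq L(f)\otimes L(Z)$ is also an equivalence, hence $f\otimes\id_{Z}$ is a local equivalence. By the criterion \cite[2.2.1.9]{Lurie_HA} this is exactly the condition for the localization to be symmetric monoidal. The main (and essentially only) obstacle in this argument is the bookkeeping in the initial choice of $\kappa$ to secure closure of $\cC^{\kappa}$ under $\otimes$; once that is done, everything else follows formally from the universal property of Day convolution and standard facts about presentable $\infty$-categories.
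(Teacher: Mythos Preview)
Your proof is correct and follows essentially the same strategy as the paper's: choose $\kappa$ so that $\cC^{\kappa}$ is a symmetric monoidal subcategory, and use the universal property of Day convolution to extend the inclusion to a symmetric monoidal left adjoint $\cP(\cD)\to\cC$, checking the localization criterion of \cite[2.2.1.9]{Lurie_HA}. You supply a bit more detail than the paper (the argument for full faithfulness of $R$ and the explicit reason tensor preserves local equivalences), while the paper additionally records the step of replacing the essentially small $\cC^{\kappa}$ by an actually small $\cD$.
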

\begin{proof}
Let $\cC$ be a presentably symmetric monoidal $\infty$-category. Choose a regular cardinal $\kappa$ such that $\cC$ is $\kappa$ accessible. By enlarging $\kappa$ we can assume that the $\kappa$-compact objects $\cC^\kappa \subset \cC$ form a full symmetric monoidal subcategory. We can replace $\cC^\kappa$ up to equivalence by a small, symmetric monoidal $\infty$-category $\cD$ since it is essentially small. Then we find that $\cC$ is an accessible localization of $\cP(\cD)$. The inclusion $\cD \simeq \cC^\kappa \to \cP(\cD)$ is by construction symmetric monoidal. We conclude that the localization functor $\cP(\cD) \to \cC$ can be endowed with a symmetric monoidal structure with respect to the Day convolution symmetric monoidal structure, using the universal property of the Day convolution. By the description of symmetric monoidal localizations given above this finishes the proof.
\end{proof}

Following~\cite[Definition 1.21]{barwick_left-right} (or rather~\cite[Corollary 2.7]{barwick_left-right}), we say that a combinatorial model category is \emph{tractable} if it admits a set of generating cofibrations with cofibrant domains. 

Now assume that $\cM$ is a simplicial, combinatorial, tractable and left proper symmetric monoidal model category. Denote the underlying symmetric monoidal $\infty$-category by $\cM_\infty$.  Let $L\colon \cM_\infty \to \cM_\infty$ be an accessible and symmetric monoidal localization. We say that a morphism $f\colon A \to B$ in $\cM$ is
\begin{itemize}
 \item a \emph{local cofibration} if it is a cofibration in the original model structure on~$\cM$,
 \item a \emph{local weak equivalence} if $L(\iota f)$ is an equivalence in $\cM_\infty$ where $\iota f$ denotes the corresponding morphism in $\cM_\infty$, and
 \item a \emph{local fibration} if it has the right lifting property with respect to all morphisms in $\cM$ which are simultaneously a cofibration and a weak equivalence. 
\end{itemize}

\begin{proposition}\label{prop_zwei}
  The above choices of local cofibrations, local fibrations and local weak equivalences define a simplicial, combinatorial, tractable and left proper symmetric monoidal model structure.  The underlying $\infty$-category of this model category $\cM^{loc}$ and the $\infty$-category of local objects $L\cM_\infty \subseteq \cM_\infty$ are equivalent as symmetric monoidal $\infty$-categories.
\end{proposition}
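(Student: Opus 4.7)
The plan is to exhibit $\cM^{loc}$ as a left Bousfield localization of $\cM$ and to invoke the general theory of (symmetric monoidal) left Bousfield localizations developed by Barwick~\cite{barwick_left-right}. Since $L\colon\cM_\infty\to\cM_\infty$ is accessible, there is a small set $S_0$ of morphisms in $\cM_\infty$ such that $L$ is the $\infty$-categorical Bousfield localization at $S_0$; equivalently, the $L$-equivalences are precisely those maps which become equivalences after inverting $S_0$. Using that $\cM$ is tractable and models $\cM_\infty$, I would lift each element of $S_0$ to a cofibration between cofibrant objects in $\cM$, obtaining a small set $S$ of maps in $\cM$.

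The next step is to apply Barwick's existence theorem for left Bousfield localizations of simplicial, tractable, left proper, combinatorial model categories to produce a model structure $L_S\cM$ with the same cofibrations as $\cM$, which is again simplicial, combinatorial, tractable and left proper. To match its weak equivalences with the local weak equivalences in the statement, I would use the standard identification of $(L_S\cM)_\infty$ with the $\infty$-categorical Bousfield localization of $\cM_\infty$ at $S_0$, which by construction agrees with $L\cM_\infty$. Once the cofibrations and weak equivalences coincide with those in the proposition, the local fibrations are then automatic from the lifting-property definition.

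The main obstacle is to show that this localized model structure is symmetric monoidal. Since cofibrations are unchanged, the cofibration half of the pushout-product axiom is inherited from $\cM$, as is the unit axiom. What remains is that the pushout-product of a local trivial cofibration with a cofibration is a local equivalence, and by Barwick's monoidal localization criterion it suffices to check this when one factor lies in $S$ and the other is a generating cofibration of $\cM$. Here one uses crucially that $L$ is a symmetric monoidal localization: tensoring a local equivalence with a cofibrant object yields a local equivalence, which can be verified by passing to $\cM_\infty$ and exploiting that $L$ is compatible with $\otimes$. Combined with left properness, a standard cube argument then upgrades this closure property to the pushout-product statement.

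Finally, the underlying symmetric monoidal $\infty$-category $(\cM^{loc})_\infty$ is the symmetric monoidal Bousfield localization of $\cM_\infty$ at the image of $S$. By the essential uniqueness of symmetric monoidal structures on accessible symmetric monoidal localizations recalled before the proposition, it must coincide with $L\cM_\infty$ as a symmetric monoidal $\infty$-category, completing the argument.
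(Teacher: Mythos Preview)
Your proposal is correct and follows essentially the same approach as the paper. The only packaging difference is that the paper invokes \cite[Proposition~A.3.7.3]{Lurie_HTT} to produce the localized model structure directly from the accessible localization $L$ (so no explicit localizing set $S$ is ever written down), whereas you first extract $S$ and then apply Barwick's existence theorem; for the monoidal part, both arguments use tractability to reduce the pushout-product axiom to the statement that tensoring with a cofibrant object preserves local equivalences, which is then read off from the $\infty$-categorical compatibility of $L$ with $\otimes$, and both conclude by the essential uniqueness of the symmetric monoidal structure on a symmetric monoidal localization.
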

\begin{proof}
  We use \cite[Proposition A.3.7.3]{Lurie_HTT} to conclude that $\cM^{loc}$ exists and that it is a simplicial, combinatorial and left proper model category. By construction, it is a left Bousfield localization of $\cM$.  It remains to verify that the local model structure is symmetric monoidal. Since $\cM$ is tractable, so is $\cM^{loc}$, and it follows from~\cite[Corollary 2.8]{barwick_left-right} that we may assume that both the generating cofibrations of $\cM^{loc}$ and the generating acyclic cofibrations of $\cM^{loc}$ have cofibrant domains.  To verify the pushout-product axiom, it therefore suffices to show that on the level of homotopy categories for an object $Z \in \Ho(\cM)$ and a local equivalence $X \to Y$ in $\Ho(\cM)$ the morphism of $X \otimes Z \to Y \otimes Z$ is a local equivalence as well (here the tensor is the tensor on the homotopy category, i.e., the derived tensor product). But this is true since the corresponding fact is true in the $\infty$-category $\cM_\infty$ as discussed above.

By construction the $\infty$-category $L\cM_\infty$ of local objects is modeled by the localized model structure $\cM^{loc}$. It remains to show that the two are equivalent as symmetric monoidal $\infty$-categories. To this end we just observe that the identity is a symmetric monoidal left Quillen functor $\cM \to \cM^{loc}$. Thus the localized model structure endows $L\cM_\infty$ with a symmetric monoidal structure such that the localization $\cM \to \cM^{loc}$ is symmetric monoidal. But this was our defining property of the symmetric monoidal structure on $L\cM_\infty$.
\end{proof}

The next proposition is the technical backbone of this paper and will be proven at the end of Section~\ref{sec:contravariant-I}.
\begin{proposition}\label{prop:monoidal-equivalence}
  Let $\cD$ be a small symmetric monoidal $\infty$-category. Then there exists a simplicial, combinatorial, tractable and left proper symmetric monoidal model category $\cM$ whose underlying presentably symmetric monoidal $\infty$-category is symmetric monoidally equivalent to $\cP(\cD)$ equipped with the Day convolution structure.
\end{proposition}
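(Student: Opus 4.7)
The plan is to realize $\cP(\cD)$ as the underlying symmetric monoidal $\infty$-category of a concretely constructed model category by going through a chain of model-categorical presentations. First, using the result of Pavlov--Scholbach recalled as Proposition~\ref{prop:SMC-as-localization}, I would represent the small symmetric monoidal $\infty$-category $\cD$ as an $E_\infty$ algebra $M$ in $\sset$ equipped with the Joyal model structure. Since the Joyal model structure does not carry a symmetric monoidal model structure compatible with strictly commutative algebras, I next apply the rigidification result of Kodjabachev--Sagave \cite{Kodjabachev-S_Joyal-I} to replace $M$ by a weakly equivalent strictly commutative monoid $\ovl M$ in the diagram category $\ssetI$ of $\cI$-diagrams of simplicial sets, where $\cI$ denotes the category of finite sets and injections.

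With $\ovl M$ in hand, the next step is to pass from the contravariant model structure on $\sset/M$, which by Lurie's straightening presents the unstructured $\infty$-category $\cP(\cD)$, to a suitable contravariant-type model structure on the slice $\ssetI/\ovl M$. This is exactly the content to be developed in Section~\ref{sec:contravariant-I}: I would construct such a model structure, verify that it is simplicial, combinatorial, tractable and left proper, and build a zig-zag of Quillen equivalences between $\sset/M$ (with the contravariant structure) and $\ssetI/\ovl M$ (with the new structure). The payoff is that the Day-type tensor on $\ssetI$, combined with the strict commutativity of $\ovl M$, induces a genuine symmetric monoidal product on the slice: the tensor of $X \to \ovl M$ and $Y \to \ovl M$ is $X \tensor Y \to \ovl M \tensor \ovl M \to \ovl M$ where the second map is the multiplication, and symmetry of the whole construction follows from commutativity of $\ovl M$.

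It remains to verify the pushout--product and unit axioms for this tensor against the contravariant-style model structure, yielding a simplicial, combinatorial, tractable and left proper symmetric monoidal model category $\cM$; and to identify the underlying symmetric monoidal $\infty$-category $\cM_\infty$ with $\cP(\cD)$ carrying the Day convolution. The latter identification would proceed by noting that $\cM_\infty$ is presentably symmetric monoidal and contains a symmetric monoidal copy of $\cD$ via the Yoneda embedding obtained from the representable right fibrations; the universal property of Day convolution recorded above, together with the result of Gepner--Groth--Nikolaus \cite{GGN} that compares symmetric monoidal structures on $\infty$-categories of right fibrations with Day convolution, then pins down the symmetric monoidal structure uniquely.

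The main obstacle is the construction and analysis of the contravariant model structure on $\ssetI/\ovl M$. One must simultaneously ensure that the basic model-categorical axioms hold, that the natural comparison functors relating it to $\sset/M$ assemble into Quillen equivalences (so that no homotopical information is lost in the passage from $M$ to $\ovl M$), and that the pushout--product axiom holds with respect to the tensor induced from the strictly commutative multiplication on $\ovl M$. Separately, verifying that the induced symmetric monoidal structure on $\cM_\infty$ coincides with Day convolution (rather than some a priori different presentably symmetric monoidal refinement) requires carefully extracting a symmetric monoidal Yoneda embedding from the model-categorical setup, which is where the input from \cite{GGN} will be essential.
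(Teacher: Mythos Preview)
Your overall plan matches the paper's proof closely: represent $\cD$ by an $E_\infty$ algebra $M$ in $\sset_{\mathrm{Joyal}}$, rigidify to a strictly commutative $M^{\mathrm{rig}}$ in $\ssetI$, build the positive contravariant $\cI$-model structure on $\ssetI/M^{\mathrm{rig}}$, verify the pushout product axiom, and compare with $\sset/M$ via a chain of Quillen equivalences. All of this is exactly what the paper does in Section~\ref{sec:contravariant-I}.

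The one place where your outline is underspecified, and where the paper's actual argument is more delicate than you suggest, is the final identification of the symmetric monoidal structure on $\cM_\infty$ with Day convolution. You write that $\cM_\infty$ ``contains a symmetric monoidal copy of $\cD$ via the Yoneda embedding'' and that the universal property of Day convolution together with \cite{GGN} then finishes the job. But the Yoneda-type functor $\Psi\colon \cD \to \cM_\infty$ obtained from the chain of Quillen equivalences and the Grothendieck construction is, a priori, only a functor of underlying $\infty$-categories; proving that it is symmetric monoidal is precisely the content to be established, not an input. The paper handles this in two steps. First, it checks directly (Lemma~\ref{lemma_rep}) that the essential image $\Psi(\cD)$ is closed under the tensor product in $\cM_\infty$, so that $\Psi(\cD)$ inherits a symmetric monoidal structure and $\Psi$ corestricts to an equivalence of underlying $\infty$-categories $\cD \simeq \Psi(\cD)$. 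Second, and this is the crux, it upgrades this to a symmetric monoidal equivalence by a naturality trick: the assignment $\cD \mapsto \Psi(\cD)$ assembles into an endofunctor $G$ of $\SMC$ with $UG \simeq U$, and Lemma~\ref{lastlemmahere} shows that any such $G$ is canonically equivalent to the identity. The input from \cite{GGN} is not a direct comparison of monoidal structures on right fibrations with Day convolution, but rather the structural fact that the free functor $\Fr\colon \Cat_\infty \to \SMC$ exhibits $\SMC$ as the free presentable preadditive $\infty$-category on $\Cat_\infty$, which forces $G\simeq\id$. You should make this mechanism explicit rather than appealing to an unspecified comparison result.
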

We can now prove the main theorem from the introduction:
\begin{proof}[Proof of Theorem~\ref{main_thm}]
Propositions~\ref{prop_eins} and~\ref{prop_zwei} reduce the claim to the statement of Proposition~\ref{prop:monoidal-equivalence}.
\end{proof}
The following theorem establishes more properties of the symmetric monoidal model categories that are provided by our proof of Theorem~\ref{main_thm}.
\begin{theorem}\label{thm:favorable-properties-of-M}
Let $\cC$ be a presentably symmetric monoidal $\infty$-category.
Then the symmetric monoidal model category $\cM$ of Theorem~\ref{main_thm} can be chosen such that the following holds:
\begin{enumerate}[(i)]
\item For any operad  $\mathcal O$ in $\sset$, the forgetful functor
$\cM[\mathcal O] \to \cM$ from the category of $\cO$-algebras in $\cM$
creates a model structure on $\cM[\mathcal O]$. 
\item If $\cP \to \cO$ is a weak equivalence of operads,  
then the induced adjunction $\cM[\mathcal P]\rightleftarrows \cM[\mathcal O]$ 
is a Quillen equivalence. In particular, the categories of $E_{\infty}$ objects and strictly commutative monoid objects in $\cM$ are Quillen equivalent. 
\item The $\infty$-category associated with the lifted model structure on commutative monoid objects in $\cM$ is equivalent to the $\infty$-category of commutative algebra objects in the $\infty$-category~$\cC$. 
\end{enumerate}
\end{theorem}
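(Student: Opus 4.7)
The approach is to exploit the specific construction of $\cM$ given in Section~\ref{sec:E-infty}: there $\cM$ is built as a contravariant-type model structure on a slice of the diagram category $\sset^{\cI}$ over the strictly commutative rigidification of an $E_\infty$-algebra representing $\cC$. The ambient $\cI$-diagram category carries a positive-type projective model structure in which the symmetric group actions arising on iterated tensor powers of generating cofibrations are sufficiently free. This is the $\cI$-analogue of the positive stable model structure on symmetric spectra, and is precisely the structure responsible for the favorable operadic behavior the theorem asserts---so the plan is in essence to verify that $\cM$ satisfies the hypotheses under which the operadic rectification machinery goes through, and then invoke that machinery as a black box.

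For (i), I would verify that $\cM$ is symmetric $h$-monoidal in the sense of Pavlov--Scholbach \cite{Pavlov-Scholbach-admissibility}, the key point being that for any (generating acyclic) cofibration $f$ in $\cM$ the $n$-fold iterated pushout-product $f^{\square n}$, regarded as a $\Sigma_n$-equivariant morphism, has free enough $\Sigma_n$-action that its quotient by $\Sigma_n$, tensored with any object of $\cM$, remains an acyclic cofibration. Positivity of the $\cI$-model structure forces generating (acyclic) cofibrations to be concentrated in positive $\cI$-degrees, and the $n$-th tensor power of an object supported in positive degree carries a free $\Sigma_n$-action away from the basepoint, which is exactly the input required. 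Given this, the standard transfer argument produces the lifted model structure on $\cM[\cO]$ for every simplicial operad $\cO$.

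For (ii), once admissibility holds, the derived free algebra functors along $\cP \to \cO$ preserve weak equivalences of cofibrant algebras (again by the symmetric-$\Sigma$ cofibrancy coming from positivity), and the standard rectification argument of \cite{Pavlov-Scholbach-admissibility} then shows that the adjunction $\cM[\cP]\rightleftarrows \cM[\cO]$ is a Quillen equivalence. Specializing to a cofibrant $E_\infty$-resolution of the commutative operad gives the Quillen equivalence between $E_\infty$-objects and strictly commutative monoids. For (iii), I would combine (ii) with the general fact that for a simplicial combinatorial symmetric monoidal model category the underlying $\infty$-category of $E_\infty$-algebras is $\mathrm{CAlg}$ of the underlying symmetric monoidal $\infty$-category (see \cite[Chapter 4.5]{Lurie_HA}). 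Applying this to $\cM$ and using the symmetric monoidal equivalence $\cM_\infty \simeq \cC$ supplied by Proposition~\ref{prop:monoidal-equivalence}, the $\infty$-category of $E_\infty$-objects in $\cM$---and hence by (ii) of commutative monoids in $\cM$---is identified with $\mathrm{CAlg}(\cC)$.

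The hard part will be step (i): verifying the strong symmetric cofibrancy/$h$-monoidality conditions is not formal, since one must track how the positive $\cI$-model structure interacts with the contravariant-over-$M$ localization used to build $\cM$. Concretely, one has to check that the freeness of $\Sigma_n$-actions in positive $\cI$-degree survives the slice construction and the contravariant local fibrancy/cofibrancy conditions developed in Section~\ref{sec:contravariant-I}; once this bookkeeping is carried out, both (ii) and (iii) follow with essentially no further work.
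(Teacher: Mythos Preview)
Your proposal is correct and follows essentially the same route as the paper. The paper packages (i) and (ii) into Proposition~\ref{prop:operad-lift}, whose proof verifies symmetric $h$-monoidality and symmetric flatness of $\cM$ in the sense of Pavlov--Scholbach exactly along the lines you sketch (freeness of the $\Sigma_n$-actions in positive $\cI$-levels, tracked through the slice and the contravariant localization); for (iii) the paper invokes \cite[Theorem~7.10]{Pavlov-Scholbach-admissibility} directly (using the symmetric flatness just established) rather than passing through $E_\infty$-objects and (ii) as you do, but this is only an organizational difference.
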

\begin{proof}
Parts (i) and (ii) follow from our construction and Proposition~\ref{prop:operad-lift} below. Part (iii) follows from~\cite[Theorem~7.10]{Pavlov-Scholbach-admissibility}. The \emph{symmetric flatness} hypothesis needed for the latter theorem is verified in the proof of Proposition~\ref{prop:operad-lift} below. 
\end{proof}
\subsection{Functoriality}\label{subsec:functoriality} We now provide a strengthening of our main result for functors. The methods and ideas are precisely the same as before, we only have to carefully keep track of the functoriality.

We first prove a slight generalization of Proposition \ref{prop_eins}. For the formulation, we say that a symmetric monoidal left adjoint functor $F\colon \cC \to \cC'$ between presentably symmetric monoidal $\infty$-categories is a \emph{localization of a symmetric monoidal left adjoint functor $G\colon \cE \to \cE'$} if there is a commutative diagram of presentably symmetric monoidal $\infty$-categories
\[
\xymatrix@-1pc{
\cE \ar[r]^{G}\ar[d]_L & \cE \ar[d]^{L'} \\
\cC \ar[r]^F & \cC'
}
\]
in which the vertical functors $L$ and $L'$ are symmetric monoidal localizations. It is easy to see that once $G$ and the localizations $L$ and $L'$ are given, $G$ descends to a functor $F$ if and only if it sends local equivalences to local equivalences. Moreover, $F$ is determined up to equivalence by $G$ in that case.

\begin{lemma}\label{lem_presfun}
  Let $F\colon \cC \to \cC'$ be a symmetric monoidal left adjoint functor between presentably symmetric monoidal $\infty$-categories. Then there exists a symmetric monoidal functor $f\colon \cD \to \cD'$ between small symmetric monoidal $\infty$-categories such that $F$ is a localization of the left Kan extension $f_!\colon \cP(\cD) \to \cP(\cD')$.
\end{lemma}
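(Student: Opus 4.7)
The plan is to generalize the argument of Proposition~\ref{prop_eins} by choosing a common regular cardinal for the source and target at the same time, and then restricting $F$ to the $\kappa$-compact objects on both sides.

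First I would choose a regular cardinal $\kappa$ such that:
(a) both $\cC$ and $\cC'$ are $\kappa$-accessible;
(b) the full subcategories $\cC^\kappa \subset \cC$ and $(\cC')^\kappa \subset \cC'$ of $\kappa$-compact objects are closed under the symmetric monoidal tensor products of $\cC$ and $\cC'$ and contain the respective tensor units (possible by enlarging $\kappa$ as in the proof of Proposition~\ref{prop_eins});
(c) the functor $F$ sends $\kappa$-compact objects to $\kappa$-compact objects. The last point is the only new ingredient: since $F$ is a left adjoint, it has a right adjoint~$R$, and $R$ is accessible, so $R$ preserves $\kappa'$-filtered colimits for some $\kappa'$; enlarging $\kappa$ beyond such a $\kappa'$ ensures~(c) by~\cite[Proposition~5.5.7.2]{Lurie_HTT}. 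With these choices, $F$ restricts to a symmetric monoidal functor $\cC^\kappa \to (\cC')^\kappa$. Replacing both $\kappa$-compact subcategories up to equivalence by small symmetric monoidal $\infty$-categories $\cD$ and $\cD'$, we obtain a symmetric monoidal functor $f\colon \cD \to \cD'$.

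Next I would invoke the universal property of the Day convolution as stated right before Proposition~\ref{prop_eins}. Applied to the symmetric monoidal inclusions $\cD \simeq \cC^\kappa \hookrightarrow \cC$ and $\cD' \simeq (\cC')^\kappa \hookrightarrow \cC'$, it produces symmetric monoidal left adjoint functors $L\colon \cP(\cD) \to \cC$ and $L'\colon \cP(\cD') \to \cC'$, which are symmetric monoidal (accessible) localizations exactly as in the proof of Proposition~\ref{prop_eins}. The functor $f$ extends to the symmetric monoidal left Kan extension $f_!\colon \cP(\cD) \to \cP(\cD')$, which is symmetric monoidal and a left adjoint by the universal property of Day convolution applied to $j' \circ f\colon \cD \to \cP(\cD')$. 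Then both $F \circ L$ and $L' \circ f_!$ are symmetric monoidal left adjoint functors $\cP(\cD) \to \cC'$ whose restrictions along the Yoneda embedding $j\colon \cD \to \cP(\cD)$ agree with the symmetric monoidal composite $\cD \hookrightarrow \cC \xrightarrow{F} \cC'$ (here we use that $F$ restricts to $f$ by construction). The universal property of Day convolution then forces $F \circ L \simeq L' \circ f_!$, i.e., the desired commutative square.

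Finally, since $L$ and $L'$ are symmetric monoidal localizations and the square commutes, $f_!$ automatically descends to $F$ up to equivalence, so $F$ is a localization of $f_!$ in the sense defined just before the statement. The main obstacle is the accessibility bookkeeping in the first step — ensuring that one can find a single $\kappa$ which simultaneously makes both $\cC$ and $\cC'$ accessible, which has the tensor product restricting to the $\kappa$-compact objects on both sides, and for which $F$ preserves $\kappa$-compactness; once this is done, the rest is a formal application of the universal property of Day convolution.
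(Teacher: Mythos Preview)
Your proposal is correct and follows essentially the same route as the paper's proof: choose a sufficiently large regular cardinal $\kappa$ so that $F$ restricts to a symmetric monoidal functor between the $\kappa$-compact objects, pass to small models $\cD \to \cD'$, and identify $F$ with the localization of $f_!$. The paper is terser---it cites \cite[Proposition~5.4.7.7]{Lurie_HTT} directly for the preservation of $\kappa$-compacts and leaves the commutativity of the square implicit---whereas you spell out both the accessibility bookkeeping and the Day-convolution universal-property argument in full, but the underlying strategy is the same.
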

\begin{proof}
  First note that by \cite[Proposition 5.4.7.7]{Lurie_HTT}, every left adjoint functor ${\cC \to \cC'}$ preserves $\kappa$-compact objects for some $\kappa$, i.e., it restricts to a functor ${F\!\!\mid_{\cC^\kappa}\colon \cC^\kappa \to (\cC')^\kappa}$. Since $F$ is left adjoint, it is the left Kan extension of $F\!\!\mid_{\cC^ \kappa}$. This in turn implies that it is a localization of
\[\left({F\!\!\mid_{\cC^\kappa}}\right)_!\colon \cP(\cC^\kappa) \to \cP({\cC'}^\kappa).\]
Replacing the essentially small $\infty$-categories $\cC^\kappa$ and $(\cC')^\kappa$ by small categories proves the claim.
 \end{proof}

In the proof of the next theorem we will use Proposition~\ref{prop_symmon} which we state and prove in Section 4.

\begin{theorem}
  Let $F:\cC \to \cC'$ be a  symmetric monoidal left adjoint functor between presentably symmetric monoidal $\infty$-categories. Then there exist a simplicial symmetric monoidal left adjoint functor $S\colon \cM \to \cM'$ between simplicial, combinatorial and left proper symmetric monoidal model categories $\cM$ and $\cM'$ such that the underlying functor $S_\infty\colon \cM_\infty \to \cM'_\infty$ is equivalent to $F$.
\end{theorem}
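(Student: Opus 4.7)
The strategy is to run the functorial analogue of the proof of Theorem~\ref{main_thm}, reducing first to the presheaf case via Lemma~\ref{lem_presfun} and then localizing both the source and the target symmetric monoidal model categories along the lines of Proposition~\ref{prop_zwei}. By Lemma~\ref{lem_presfun}, we may choose a symmetric monoidal functor $f\colon \cD \to \cD'$ between small symmetric monoidal $\infty$-categories together with symmetric monoidal localizations $L\colon \cP(\cD) \to \cC$ and $L'\colon \cP(\cD')\to \cC'$ such that $L' \circ f_! \simeq F \circ L$, where $f_!$ denotes the left Kan extension equipped with its canonical symmetric monoidal structure. In particular, $f_!$ sends $L$-local equivalences to $L'$-local equivalences.

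Next, I would invoke Proposition~\ref{prop_symmon} (the functorial enhancement of Proposition~\ref{prop:monoidal-equivalence} alluded to in the introduction to this subsection) in order to realise the symmetric monoidal left adjoint $f_!\colon \cP(\cD)\to \cP(\cD')$ by a simplicial, symmetric monoidal, left Quillen functor $T\colon \cN \to \cN'$ between simplicial, combinatorial, tractable and left proper symmetric monoidal model categories, with an equivalence $T_\infty \simeq f_!$ after taking underlying $\infty$-categories. The tractability clause (ensured by the construction in Section~\ref{sec:contravariant-I}) is what allows us to perform Bousfield localizations in a symmetric monoidal fashion, exactly as in the proof of Proposition~\ref{prop_zwei}.

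Now I would localize functorially. Applying Proposition~\ref{prop_zwei} to $\cN$ with respect to the symmetric monoidal localization $L$ produces a simplicial, combinatorial, tractable and left proper symmetric monoidal model category $\cM := \cN^{\loc}$ whose underlying symmetric monoidal $\infty$-category is equivalent to $\cC$, and similarly $\cM' := (\cN')^{\loc'}$ models $\cC'$. The identity functors $\cN \to \cM$ and $\cN'\to \cM'$ are symmetric monoidal left Quillen functors, so the composition $\cN \xrightarrow{T} \cN' \to \cM'$ is symmetric monoidal and left Quillen. The key point is then that, because $T_\infty \simeq f_!$ and $f_!$ preserves local equivalences, the composite $\cN \to \cM'$ sends the generating acyclic cofibrations of $\cM = \cN^{\loc}$ to weak equivalences in $\cM'$. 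Hence, by the universal property of left Bousfield localization, it descends to a symmetric monoidal simplicial left Quillen functor $S\colon \cM \to \cM'$, and by construction $S_\infty \simeq L' \circ f_! \simeq F \circ L$, which via the identification of underlying $\infty$-categories corresponds to $F$.

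The main obstacle is the intermediate step of producing the functorial model $T\colon \cN \to \cN'$ of $f_!$ with the right combinatorial properties, i.e.\ establishing Proposition~\ref{prop_symmon}. Once that is in hand, verifying that $T$ descends to the local model structures, and that the descended functor $S$ remains symmetric monoidal and simplicial, is formal from Proposition~\ref{prop_zwei} and the universal property of Bousfield localization, since every relevant compatibility has already been checked at the level of homotopy categories and $\infty$-categories via Lemma~\ref{lem_presfun}.
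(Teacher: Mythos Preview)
Your proposal is correct and follows essentially the same route as the paper: reduce to the presheaf case via Lemma~\ref{lem_presfun}, invoke Proposition~\ref{prop_symmon} to model $f_!$ by a symmetric monoidal left Quillen functor, and then pass to the Bousfield localizations on both sides using Proposition~\ref{prop_zwei} and the fact that $f_!$ preserves local equivalences. The only cosmetic difference is that the paper keeps the same underlying functor when passing to the localized model structures rather than phrasing it as a descent, but the content is identical.
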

\begin{proof}
We first use Lemma \ref{lem_presfun} to conclude that there is a symmetric monoidal functor $f\colon \cD \to \cD'$ between small symmetric monoidal $\infty$-categories such that $F$ is a localization of $f_!$. 
Using Proposition~\ref{prop_symmon} below, we can realize $f_!$ as a left Quillen functor $S\colon \cM \to \cM'$ between symmetric monoidal model categories which model $\cP(\cD)$ and $\cP(\cD')$.  We now equip the categories $\cM$ and $\cM'$ with the local model structures which, by Proposition~\ref{prop_zwei}, correspond to the localization that give $\cC$ and~$\cC'$.  Since the functor $f_!$ descents to a local functor, it preserves local equivalences. Thus the functor $S$ is also left Quillen with respect to the local model structures and the underlying functor of $\infty$-categories represents the functor $F$.
\end{proof}

%%%%%%%%%%%%%%%%%%%%%%%%%%%%%%%%%%%%%%%%%%%%%%%%%%%%%%%%%%%%%%%%%%%%%%%%%%%%%%%
\section{The contravariant \texorpdfstring{$\cI$}{I}-model structure}\label{sec:contravariant-I}
%%%%%%%%%%%%%%%%%%%%%%%%%%%%%%%%%%%%%%%%%%%%%%%%%%%%%%%%%%%%%%%%%%%%%%%%%%%%%

In this section we set up the model structures that will be used in the proof of Proposition~\ref{prop:monoidal-equivalence} and its functorial refinement Proposition~\ref{prop_symmon}.
\subsection{The contravariant model structure} Let $S$ be a simplicial set and let $\sset/S$ be the category of
objects over $S$. We recall from~\cite[Chapter~2.1.4]{Lurie_HTT}
or~\cite[Section~8]{Joyal-quasi-categories} that $\sset/S$ admits a
\emph{contravariant} model structure where the cofibrations are the
monomorphisms and the fibrant objects $X \to S$ are the \emph{right
  fibrations}, i.e., the maps with the right lifting property with
respect to the set of horn inclusions $\Lambda^{n}_{i}\subseteq
\Delta^n$, $0<i\leq n$. As we will explain in Section~\ref{sec:E-infty}, the contravariant model structure is relevant
for our work because of its connection to presheaf categories coming
from the straightening and unstraightening constructions~\cite[Chapter~2.2.1]{Lurie_HTT}.

We will frequently use
the following feature of the contravariant model structure:  
\begin{lemma}\label{lem:general-Q-adjunction-on-contravariant}\cite[Remark
  2.1.4.12]{Lurie_HTT} A morphism of simplicial sets $S \to T$ induces
  a Quillen adjunction $\sset/S \rightleftarrows \sset/T$ with respect
  to the contravariant model structures.  If $S \to T$ is a Joyal
  equivalence of simplicial sets, then this adjunction is a Quillen
  equivalence.\qed
\end{lemma}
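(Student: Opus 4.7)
The plan is to establish the two assertions separately. For the first (the Quillen adjunction), the adjoint pair is $(f_!, f^*)$ where $f_!$ is post-composition with $f$ and $f^*$ is pullback along $f$. Since the cofibrations in both contravariant model structures are the monomorphisms of underlying simplicial sets, and $f_!$ does not alter the underlying map of simplicial sets, $f_!$ preserves cofibrations. For the acyclic cofibrations, I would appeal to the explicit description of a generating set: the contravariant model structure on $\sset/S$ admits generating acyclic cofibrations built from right horn inclusions $\Lambda^n_i \subseteq \Delta^n$ (for $0 < i \leq n$) equipped with arbitrary structure maps $\Delta^n \to S$. Applying $f_!$ simply post-composes the structure map with $f$, producing elements of the analogous generating set for $\sset/T$.

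For the second assertion, suppose $f$ is a Joyal equivalence. I would verify the standard criterion for a Quillen equivalence: the derived unit is a weak equivalence on cofibrant objects, and $f^*$ reflects weak equivalences between fibrant objects. Since right fibrations are stable under pullback, $f^*$ sends fibrant objects to fibrant objects, so the reflection statement makes sense. To reduce the remaining work, I would factor $f$ in the Joyal model structure as an inner anodyne extension followed by a trivial Joyal fibration (equivalently, a trivial Kan fibration). The trivial-fibration case is relatively easy since such a map admits a section, and sections can be used to produce explicit weak inverses at the level of slice categories.

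The main obstacle I expect is the inner-anodyne case, i.e., showing that when $f\colon S \hookrightarrow T$ is inner anodyne, $(f_!, f^*)$ is a Quillen equivalence. Concretely, the goal is to prove that for every right fibration $Y \to T$ the canonical map $f_! f^* Y \to Y$ over $T$ is a contravariant equivalence. One tractable approach is a cellular induction over the inner-anodyne filtration: reduce to pushouts along single generators $\Lambda^n_i \subseteq \Delta^n$ with $0 < i < n$, and check this case by hand using pullback-stability of right fibrations and the standard combinatorics of horns. An alternative route is to invoke the straightening--unstraightening equivalence \cite[Chapter~2.2.1]{Lurie_HTT}, which identifies the contravariant model category on $\sset/S$ with a presentation of $\Fun(S^\op, \cS)$ and identifies $(f_!, f^*)$ with restriction and left Kan extension along $f^\op$; since a Joyal equivalence induces an equivalence of the associated $\infty$-categories and hence of their presheaf $\infty$-categories, the desired Quillen equivalence follows formally.
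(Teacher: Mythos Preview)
The paper does not supply a proof of this lemma at all: it is stated with a citation to \cite[Remark 2.1.4.12]{Lurie_HTT} and closed with \qed. So there is no ``paper's own proof'' to compare against; your proposal is simply being measured for correctness.

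There is one genuine gap in your argument for the Quillen adjunction. You assert that the contravariant model structure on $\sset/S$ has generating acyclic cofibrations given by right horn inclusions $\Lambda^n_i \subseteq \Delta^n$ (with $0<i\le n$) equipped with structure maps to $S$. This is false: the maps with the right lifting property against that set are exactly the right fibrations, whereas the fibrations in the contravariant model structure form a strictly smaller class in general (already when $S=\ast$ the contravariant model structure is the Kan model structure, whose fibrations are Kan fibrations, not right fibrations). Consequently the right-anodyne maps do not generate all acyclic cofibrations, and your argument that $f_!$ preserves acyclic cofibrations does not go through as written.

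The fix is short and, in fact, uses exactly the tool the paper invokes a few lines later in the proof of Lemma~\ref{lem:Ktimes-preserves-acyclic-cof}: by \cite[Lemma~8.16]{Joyal-quasi-categories}, the acyclic cofibrations in the contravariant model structure on $\sset/S$ are precisely the monomorphisms with the left lifting property against right fibrations $X\to Y$ with $Y\to S$ a right fibration. Since $f^*$ (pullback along $f$) preserves right fibrations, adjunction immediately shows that $f_!$ preserves this lifting property, hence preserves acyclic cofibrations. Your treatment of the Quillen equivalence part is fine; both the factorization-plus-cellular-induction route and the appeal to straightening/unstraightening are standard and either recovers Lurie's argument.
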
 
For simplicial sets $K$ and $T$, we consider the functor
\begin{equation}\label{eq:functor-Ktimes}
  K \times - \colon \sset/T \to \sset/K\times T 
\end{equation}
sending objects and morphisms in $\sset/T$ to their product with
$\id_K$.
\begin{lemma}\label{lem:Ktimes-preserves-acyclic-cof}
  If $f \colon X \to Y$ is an acyclic cofibration in the contravariant
  model structure on $\sset/T$, then $K \times f$ is an acyclic
  cofibration in the contravariant model structure on $\sset/K\times
  T$.
\end{lemma}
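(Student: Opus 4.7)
The plan is to exhibit the functor $K \times - \colon \sset/T \to \sset/K\times T$ as the pullback functor $\pi^*$ along the projection $\pi \colon K\times T \to T$ and to construct a simplicially enriched right adjoint $\pi_*$ whose action is compatible with right fibrations. Concretely, I define $(\pi_* E)_n$ to be the set of pairs $(t, s)$ with $t \colon \Delta^n \to T$ and $s \colon K \times \Delta^n \to E$ a lift of $\id_K \times t$. A direct calculation using the natural simplicial enrichments on the slice categories produces a natural isomorphism
\begin{equation*}
\map_{K \times T}(K \times A,\, E) \;\cong\; \map_T(A,\, \pi_* E),
\end{equation*}
exhibiting $\pi^* \dashv \pi_*$ as a simplicial adjunction.

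The central step is to show that $\pi_*$ sends right fibrations over $K \times T$ to right fibrations over $T$. By the adjunction, the right lifting property of $\pi_* E \to T$ against a horn inclusion $\Lambda^n_i \subseteq \Delta^n$ (for $0 < i \leq n$) is equivalent to the right lifting property of $E \to K \times T$ against $K \times \Lambda^n_i \to K \times \Delta^n$. Since the class of right anodyne maps is closed under Leibniz product with arbitrary monomorphisms, applying this to $\emptyset \to K$ and $\Lambda^n_i \to \Delta^n$ shows that $K \times \Lambda^n_i \to K \times \Delta^n$ is right anodyne, so the required lift exists against any right fibration $E$.

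To conclude, note that $K \times f$ is automatically a monomorphism, hence a cofibration in the contravariant model structure on $\sset/K \times T$. Both slice categories are simplicial model categories with the enrichments described above and every object is cofibrant, so a morphism is a weak equivalence precisely when it induces a weak equivalence on $\map(-, E)$ for every fibrant $E$. For a right fibration $E \to K \times T$, the simplicial adjunction identifies the map induced by $K \times f$ with
\begin{equation*}
\map_T(Y, \pi_* E) \longrightarrow \map_T(X, \pi_* E),
\end{equation*}
and by the previous paragraph $\pi_* E \to T$ is a right fibration; thus this map is a weak equivalence of Kan complexes because $f$ is.

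The main obstacle — though, once identified, it is a short argument — is the preservation of right fibrations under $\pi_*$, which cleanly reduces to the well-known stability of the right anodyne class under Cartesian product with arbitrary simplicial sets.
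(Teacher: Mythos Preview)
Your proof is correct and uses essentially the same right adjoint as the paper's argument, but packaged differently. The paper translates the lifting problem via the adjunction $(K\times -,(-)^K)$ on $\sset$ and observes that the adjoint problem lives over the object $T \times_{(K\times T)^K} E^K$; this is exactly your $\pi_* E$. Where the paper invokes Joyal's characterization of contravariant acyclic cofibrations (left lifting against right fibrations between right fibrations over the base) and checks the two right-fibration conditions via base change and the cotensor, you instead use the simplicial model structure to detect weak equivalences by mapping into fibrant objects, together with the simplicial adjunction $\pi^*\dashv\pi_*$. Your verification that $\pi_*$ preserves fibrant objects via the right-anodyne pushout product with $\emptyset\to K$ is the dual formulation of the paper's appeal to cotensors preserving right fibrations. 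The upshot is the same; your route avoids citing Joyal's Lemma~8.16 at the cost of invoking the simplicial detection criterion for weak equivalences, while the paper's route is slightly more hands-on with the lifting diagram.
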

We note that since we do not view $K \times - $ as an endofunctor of
$\sset/T$ by projecting away from $K$, this lemma is not implied by
the fact that the contravariant model structure is simplicial.
\begin{proof}[Proof of Lemma~\ref{lem:Ktimes-preserves-acyclic-cof}]
  By~\cite[Lemma 8.16]{Joyal-quasi-categories}, the acyclic
  cofibrations in the contravariant model structure are characterized
  by the left lifting property with respect to the right fibrations
  between objects that are right fibrations relative to the
  base. Hence we have to prove that for every acyclic cofibration $U
  \to V$ in the contravariant model structure on $\sset/T$ and for
  every commutative diagram
  \[\xymatrix@-1pc{
    K\times U \ar[r] \ar[d] & X \ar[d] \\
    K\times V \ar[r] \ar[d] & Y \ar[d] \\
    K\times T \ar[r]^{=} & K\times T }\] in $\sset$ where the right
  hand vertical maps are right fibrations, the upper square admits a
  lift $K\times V \to X$.  Using the tensor/cotensor adjunction
  $(K\times -, (-)^{K})$ on $\sset$, this is equivalent to finding a
  lift in the upper left hand square in
  \[\xymatrix@-1pc{
    U \ar[r] \ar[d] & T \times_{(K\times T)^K} X^K \ar[r] \ar[d] & X^{K} \ar[d] \\
    V \ar[r] \ar[d] & T \times_{(K\times T)^K} Y^K \ar[r] \ar[d] & Y^{K} \ar[d] \\
    T \ar[r]^{=} & T \ar[r]& (K\times T)^{K}\ . }\] Since base change
  preserves right fibrations and the cotensor preserves right
  fibrations (by the dual of~\cite[Corollary 2.1.2.9]{Lurie_HTT}), the
  upper vertical map in the middle is a right fibration between right
  fibrations relative to $T$.
\end{proof}
Since $K\times -$ preserves contravariant cofibrations and all objects
in $\sset/T$ are cofibrant, Ken Brown's lemma and the preceding
statement imply:
\begin{corollary}\label{cor:Ktimes-preserves-contravariant-we}
  The functor $K\times -\colon \sset/T \to \sset/K\times T $ preserves
  contravariant weak equivalences. \qed
\end{corollary}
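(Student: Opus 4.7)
The plan is to deduce the corollary from Ken Brown's lemma, using Lemma~\ref{lem:Ktimes-preserves-acyclic-cof} as the nontrivial input. Recall that Ken Brown's lemma says that any functor from a model category to a relative category which sends acyclic cofibrations between cofibrant objects to weak equivalences automatically sends \emph{all} weak equivalences between cofibrant objects to weak equivalences. To apply it here, I need three ingredients: (a) every object of $\sset/T$ is cofibrant in the contravariant model structure, (b) the functor $K \times -$ preserves contravariant cofibrations, and (c) it preserves acyclic cofibrations between cofibrant objects.

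Ingredient (a) is immediate because the cofibrations in the contravariant model structure are exactly the monomorphisms, and the initial object of $\sset/T$ is $\emptyset \to T$, so every structure map $X \to T$ is a cofibration out of a cofibrant object. Ingredient (b) is also formal: if $f \colon X \to Y$ is a monomorphism of simplicial sets, then $\id_K \times f \colon K \times X \to K \times Y$ is again a monomorphism, and the analogous statement passes to the slice categories under consideration. Ingredient (c) is precisely the content of Lemma~\ref{lem:Ktimes-preserves-acyclic-cof}, which was proved just before by a lifting argument with the cotensor $(-)^{K}$.

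With (a)--(c) in hand, Ken Brown's lemma applied to $K \times - \colon \sset/T \to \sset/K \times T$ (viewed as a functor to the relative category underlying the contravariant model structure on the target) gives that contravariant weak equivalences between cofibrant objects of $\sset/T$ are sent to contravariant weak equivalences. Since by (a) every object of $\sset/T$ is cofibrant, this yields the desired conclusion. I do not expect any real obstacle here; the only mildly delicate point is that $K \times -$ is not an endofunctor of $\sset/T$ (one does not project away from $K$), so the simplicial enrichment of the contravariant model structure on $\sset/T$ alone does not give the statement for free, which is exactly why Lemma~\ref{lem:Ktimes-preserves-acyclic-cof} had to be proved separately.
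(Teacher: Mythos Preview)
Your proposal is correct and matches the paper's own argument essentially verbatim: the paper deduces the corollary from Ken Brown's lemma together with Lemma~\ref{lem:Ktimes-preserves-acyclic-cof}, using that $K\times -$ preserves cofibrations (monomorphisms) and that every object of $\sset/T$ is cofibrant in the contravariant model structure. Your remark that the simplicial enrichment alone does not suffice because $K\times -$ is not an endofunctor of $\sset/T$ is also exactly the observation the paper makes.
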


\subsection{The Joyal \texorpdfstring{$\cI$}{I}-model structure}
Let $\cI$ be the category with the finite sets
$\bld{m}=\{1,\dots,m\}$, $ m\geq 0$, as objects and the injective maps
as morphisms.  An object $\bld{m}$ of $\cI$ is \emph{positive} if
$|\bld{m}| \geq 1$, and $\cI_+$ denotes the full subcategory of $\cI$
spanned by the positive objects.

In the following, we briefly summarize the main results
about the \emph{Joyal $\cI$-model structures} on the functor category
$\ssetI = \mathrm{Fun}(\cI,\sset)$ of $\cI$-diagrams of simplicial
sets from~\cite{Kodjabachev-S_Joyal-I}. These results are motivated by
(and largely derived from) the construction of the corresponding Kan
model structures on $\ssetI$ in~\cite{Sagave-S_diagram}.

We say that a morphism $f$ in $\ssetI$ is a \emph{Joyal
  $\cI$-equivalence} if $\hocolim_{\cI} f$ is a Joyal equivalence in
$\sset$.  It is shown in~\cite[Proposition 2.3]{Kodjabachev-S_Joyal-I}
that $\ssetI$ admits an \emph{absolute} and a \emph{positive} Joyal
$\cI$-model structure.  In both cases, the weak equivalences are the
Joyal $\cI$-equivalences. An object $X$ is fibrant in the absolute
(resp. positive) model structure if each $\alpha \colon \bld{m} \to
\bld{n}$ in $\cI$ (resp. in $\cI_+$) induces a weak equivalence of
fibrant objects $\alpha_*\colon X(\bld{m}) \to X(\bld{n})$ in
$\sset_{\mathrm{Joyal}}$.  In both cases, the $\cI$-model structures
arise as left Bousfield localizations of absolute or positive Joyal
level model structures. Particularly, we will use that a Joyal
$\cI$-equivalence between positive $\cI$-fibrant objects $X\to Y$ is a
\emph{positive Joyal level equivalence}, i.e., $X(\bld{m}) \to
Y(\bld{m})$ is a Joyal equivalence for all $\bld{m}$ in
$\cI_+$. Finally, we note that by~\cite[Corollary
2.4]{Kodjabachev-S_Joyal-I}, there are Quillen equivalences
\begin{equation}\label{eq:pos-abs-Joyal-chain}
\xymatrix@-1pc{\ssetI_{\mathrm{pos}} \ar@<.14pc>[rr]^{\mathrm{id}} && \ssetI_{\mathrm{abs}}  \ar@<.14pc>[ll]^{\mathrm{id}}\ar@<.14pc>[rr]^-{\colim_{\cI}} && \sset_{\mathrm{Joyal}}\ . \ar@<.14pc>[ll]^-{\mathrm{const}_{\cI}}}
\end{equation}

Concatenation of finite ordered sets induces a permutative monoidal
structure on $\cI$ with monoidal unit $\bld{0}$ and symmetry
isomorphism the obvious block permutation. The functor category
$\ssetI$ inherits a symmetric monoidal Day type convolution product
$\boxtimes$ with monoidal unit $\cI(\bld{0},-)$ from the cartesian
product in $\sset$ and the concatenation in $\cI$. Since $\ssetI$ is
tensored over $\sset$, any operad $\cD$ in $\sset$ gives rise to a
category $\ssetI[\cD]$ of $\cD$-algebras in $\ssetI$. The central
feature of the positive model structure on $\ssetI$ is that without
additional assumptions on~$\cD$, the forgetful functor $\ssetI[\cD]
\to \ssetI_{\mathrm{pos}}$ creates a \emph{positive} model structure
on $\ssetI[\cD]$ where a map is weak equivalence or fibration if the
underlying map in $\ssetI_{\mathrm{pos}}$ is~\cite[Theorem
3.1]{Kodjabachev-S_Joyal-I}.

We say that an operad $\cE$ in $\sset$ is an $E_{\infty}$ operad in
$\sset_{\mathrm{Joyal}}$ if $\Sigma_n$ acts freely on the $n$-th space
$\cE(n)$ and $\cE(n) \to *$ is a Joyal equivalence. If $\cE$ is an
$E_{\infty}$ operad in $\sset_{\mathrm{Joyal}}$, then the Joyal model
structure on $\sset$ lifts to a Joyal model structure on $\sset[\cE]$
by an argument analogous to the absolute case of~\cite[Theorem 3.1]{Kodjabachev-S_Joyal-I}.
\begin{theorem}\cite[Theorem 1.2]{Kodjabachev-S_Joyal-I}\label{thm:Joyal-I-com-E-infty-equiv} Let $\cE$ be an $E_{\infty}$ operad in
  $\sset_{\mathrm{Joyal}}$. Then the canonical morphism $\Phi\colon
  \cE \to \cC$ to the commutativity operad and the composite
  adjunction in~\eqref{eq:pos-abs-Joyal-chain} induce a chain of
  Quillen equivalences
\[
\xymatrix@-1pc{\ssetI_{\mathrm{pos}}[\cC] \ar@<-.14pc>[rr]_{\Phi^*}&& \ssetI_{\mathrm{pos}}[\cE] \ar@<.14pc>[rr]^{\colim_{\cI}}\ar@<-.14pc>[ll]_{\Phi_*} &&  \sset_{\mathrm{Joyal}}[\cE]\ . \ar@<.14pc>[ll]^-{\mathrm{const}_{\cI}}}
\]
\end{theorem}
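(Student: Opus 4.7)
The plan is to treat the two adjunctions in the chain separately. For the right-hand adjunction $(\colim_{\cI}, \const_{\cI})$ between $\ssetI_{\mathrm{pos}}[\cE]$ and $\sset_{\mathrm{Joyal}}[\cE]$, the strategy is to lift the underlying Quillen equivalence from \eqref{eq:pos-abs-Joyal-chain}. The Quillen adjunction property is essentially formal, since both algebra model structures are created from the underlying ones and $\const_{\cI}$ preserves fibrations and weak equivalences at the underlying level. Because $\colim_{\cI}$ is strong symmetric monoidal from $(\ssetI, \boxtimes)$ to $(\sset, \times)$, it commutes with the free $\cE$-algebra functor, so the derived unit on a free algebra is controlled by the underlying Quillen equivalence. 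Extending this to all cofibrant $\cE$-algebras is then a standard transfinite induction along their cellular presentations.

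The more substantial step is the left-hand adjunction $(\Phi_*, \Phi^*)$, which effects the rectification from $\cE$-algebras to strictly commutative monoids. Again the Quillen adjunction property is formal because $\Phi^*$ preserves (acyclic) fibrations by construction. To show it is a Quillen equivalence, the core observation should be that in the positive Joyal $\cI$-model structure, for every positive cofibrant $X \in \ssetI$ the $\Sigma_n$-action on the iterated convolution $X^{\boxtimes n}$ is homotopically free. Consequently, the natural map
\[
  (\cE(n) \times X^{\boxtimes n})/\Sigma_n \;\longrightarrow\; X^{\boxtimes n}/\Sigma_n
\]
induced by $\cE(n) \to *$ should be a Joyal $\cI$-equivalence: the left-hand side models the homotopy quotient, using that $\Sigma_n$ acts freely on $\cE(n)$ and that $\cE(n) \to *$ is a Joyal equivalence, while on the right the homotopical freeness of the $\Sigma_n$-action on $X^{\boxtimes n}$ means that the strict and homotopy quotients agree. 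Summing over $n$ then shows that the derived unit is a weak equivalence on the free $\cE$-algebra generated by $X$, which is enough on generators.

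The main obstacle will be upgrading this from free algebras to arbitrary cofibrant $\cE$-algebras, which I would attack by a transfinite cell induction along the cellular presentation of cofibrant objects in $\ssetI_{\mathrm{pos}}[\cE]$. Concretely, given a pushout of $\cE$-algebras along a map freely generated by a positive cofibration, one must show that the natural comparison from the derived pushout in $\cE$-algebras to the derived pushout in commutative algebras remains a weak equivalence. This reduces to a filtration-quotient analysis of such pushouts, in the spirit of Schwede--Shipley's analysis of operad algebras in symmetric spectra: each filtration subquotient is again built from symmetric powers of a positively cofibrant object, where the homotopical-freeness property applies. Verifying that these filtrations behave as expected, and in particular that positive cofibrancy propagates through the cell attachments so that the homotopical freeness of the $\Sigma_n$-actions on $\boxtimes$-powers is preserved at every stage, is the technical heart of the argument and the point where the specifically \emph{positive} $\cI$-model structure is indispensable.
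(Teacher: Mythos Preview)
The paper does not prove this theorem at all: it is stated with the citation \cite[Theorem~1.2]{Kodjabachev-S_Joyal-I} and no proof is given, since the result is imported wholesale from that reference. So there is no proof in the paper to compare your proposal against.

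That said, your sketch is a plausible outline of how such rectification theorems are typically proved, and it aligns with what the present paper implicitly relies on from the cited source. In particular, the key technical input you isolate---that for positive cofibrant $X$ in $\ssetI$ the $\Sigma_n$-action on $X^{\boxtimes n}$ is free in positive levels, so that strict and homotopy $\Sigma_n$-quotients agree---is exactly the content of \cite[Lemma~2.9]{Kodjabachev-S_Joyal-I}, which the present paper invokes elsewhere (in the proof of Proposition~\ref{prop:operad-lift} and in the proof of Proposition~\ref{prop:SMC-as-localization}). Your two-step decomposition (first lift the $(\colim_{\cI},\const_{\cI})$ equivalence to $\cE$-algebras, then compare $\cE$-algebras to commutative monoids via $\Phi$) and the cell-induction strategy for passing from free algebras to general cofibrant algebras are the standard moves in this setting, in the spirit of the analogous result for $\cI$-spaces with the Kan model structure in \cite{Sagave-S_diagram}. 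If you want to actually carry this out rather than sketch it, the filtration analysis you allude to is indeed the technical heart, and you would need to verify carefully that the pushout-product and symmetric-power behaviour of positive cofibrations in $\ssetI_{\mathrm{pos}}$ is good enough for the inductive step; but for the purposes of this paper the result is simply quoted.
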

The theorem leads to the following rigidification of $E_{\infty}$
objects in $\sset_{\mathrm{Joyal}}$ to $\cC$-algebras in $\ssetI$,
that is, to commutative monoids in
$(\ssetI,\boxtimes)$.
\begin{corollary}\label{cor:Einfty-rigidification}
  Let $M$ be an $\cE$-algebra in $\sset_{\mathrm{Joyal}}$. There exists a
  rigidification functor $(-)^{\mathrm{rig}}\colon \ssetI[\cE]\to
  \ssetI[\cC]$ and a natural chain of positive Joyal level
  equivalences between positive fibrant objects $\Phi^*(M^{\mathrm{rig}}) \ot
  M^c \to \const_{\cI}M$ in $\ssetI[\cE]$. 
\end{corollary}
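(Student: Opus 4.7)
The plan is to apply the chain of Quillen equivalences of Theorem~\ref{thm:Joyal-I-com-E-infty-equiv} functorially to $M$. First, by invoking a functorial Joyal fibrant replacement in $\sset_{\mathrm{Joyal}}[\cE]$ (at the cost of inserting one additional positive Joyal level equivalence into the chain), I may assume that $M$ is Joyal-fibrant as an $\cE$-algebra. Then $\const_{\cI} M$ is positive fibrant in $\ssetI_{\mathrm{pos}}[\cE]$, because $\const_{\cI}$ is the right adjoint of the right-hand Quillen equivalence of Theorem~\ref{thm:Joyal-I-com-E-infty-equiv} and each of its structure maps is the identity.

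Next I build $M^c$ as a replacement of $\const_{\cI} M$ in $\ssetI_{\mathrm{pos}}[\cE]$ that is simultaneously positive cofibrant and positive fibrant. To achieve both properties I apply two functorial factorizations: first factor $\emptyset \to \const_{\cI} M$ as a cofibration followed by an acyclic fibration, obtaining a cofibrant replacement $\tilde M \to \const_{\cI}M$; then factor the resulting acyclic fibration as an acyclic cofibration $\tilde M \to M^c$ followed by a fibration $M^c \to \const_{\cI} M$, which by two-out-of-three is automatically acyclic. The object $M^c$ is cofibrant as the target of an acyclic cofibration out of a cofibrant object, and fibrant by construction. The map $M^c \to \const_{\cI} M$ is a Joyal $\cI$-equivalence between positive fibrant objects, hence a positive Joyal level equivalence by the characterization recalled earlier in this section.

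I then define the rigidification functor by $X^{\mathrm{rig}} := (\Phi_*(X^c))^f$, where $X^c$ is the functorial bifibrant replacement just described and $(-)^f$ is a functorial fibrant replacement in $\ssetI_{\mathrm{pos}}[\cC]$. Applied to $\const_{\cI} M$, this produces $M^{\mathrm{rig}}$. The adjunction unit $M^c \to \Phi^*\Phi_*(M^c)$ composed with $\Phi^*$ of the fibrant replacement $\Phi_*(M^c) \to M^{\mathrm{rig}}$ yields a natural map $M^c \to \Phi^*(M^{\mathrm{rig}})$; this is the derived unit of the left-hand Quillen equivalence $(\Phi_*, \Phi^*)$ in Theorem~\ref{thm:Joyal-I-com-E-infty-equiv}, and is therefore a Joyal $\cI$-equivalence because $M^c$ is cofibrant. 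Finally, $\Phi^*(M^{\mathrm{rig}})$ is positive fibrant because the lifted model structures on $\ssetI_{\mathrm{pos}}[\cC]$ and $\ssetI_{\mathrm{pos}}[\cE]$ both have fibrations created on the underlying category $\ssetI_{\mathrm{pos}}$, so this map is also a positive Joyal level equivalence.

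The main technical subtlety is ensuring that $M^c$ is an honest bifibrant object with direct comparison maps on both sides, rather than just an end of a longer zigzag; the double-factorization step achieves exactly this. Naturality of the entire chain in $M$ then follows from the functoriality of the fibrant replacement, of the factorizations, and of the functors $\const_{\cI}$, $\Phi_*$ and $\Phi^*$.
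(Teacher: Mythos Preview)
Your proof is correct and follows the same approach as the paper: cofibrantly replace $\const_{\cI}M$, set $M^{\mathrm{rig}}=\Phi_*(M^c)^{\mathrm{fib}}$, and observe that the derived unit of the Quillen equivalence $(\Phi_*,\Phi^*)$ is a Joyal $\cI$-equivalence between positive fibrant objects, hence a positive level equivalence. Note that your double factorization is unnecessary: once $M$ is Joyal fibrant, $\const_{\cI}M$ is positive fibrant, so the single acyclic-fibration cofibrant replacement $\tilde M \twoheadrightarrow \const_{\cI}M$ already makes $\tilde M$ bifibrant---this is exactly what the paper does, leaving the fibrancy of $M$ implicit.
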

\begin{proof}
  This is analogous to the result about $E_{\infty}$ spaces
  in~\cite[Corollary 3.7]{Sagave-S_diagram}: We let $\xymatrix@1{M^c
    \ar@{->>}[r]^-{\sim} & \const_{\cI}M}$ be a cofibrant replacement
  in $\ssetI_{\mathrm{pos}}[\cE]$. Moreover, we let $\Phi_*(M^c) \to
  \Phi_*(M^c)^{\mathrm{fib}}$ be a fibrant replacement in
  $\ssetI_{\mathrm{pos}}[\cC]$. Then the adjunction unit induces an
  $\cI$-equivalence $M^c \to \Phi^*(\Phi_*(M^c)^{\mathrm{fib}})$. Since both objects are positive $\cI$-fibrant, it is even a
  positive Joyal level equivalence. Hence $M^{\mathrm{rig}} =
  \Phi_*(M^c)^{\mathrm{fib}}$ has the desired property.
\end{proof}

\subsection{The contravariant level and \texorpdfstring{$\cI$}{I}-model structures}
Let $Z\colon \cI \to \sset$ be an $\cI$-diagram of simplicial sets. We are interested in various model structures on the comma category $\ssetI/Z$ of objects over $Z$ that are induced from the contravariant model structure. For this purpose, it is important to note that the category $\ssetI/Z$ can be obtained by assembling the comma categories $\sset/Z(\bld{m})$ for varying $\bld{m}$. Indeed, every morphism $\alpha\colon \bld{m}\to\bld{n}$ in $\cI$ induces an adjunction 
\begin{equation}\label{eq:adjunction-from-alpha}
\alpha_{!}\colon \sset/Z(\bld{m}) \rightleftarrows \sset/Z(\bld{n}) \colon \alpha^*
\end{equation}
via composition with and base change along $\alpha_*\colon Z(\bld{m})
\to Z(\bld{n})$, and the adjunctions are compatible with the
composition in $\cI$. We also note that for every object $\bld{m}$
of~$\cI$, there is an adjunction
\begin{equation}\label{eq:adjunction-free-ev}
F_{\bld{m}} \colon  \sset/Z(\bld{m}) \rightleftarrows \ssetI/Z \colon \mathrm{Ev}_{\bld{m}}
\end{equation}
with right adjoint $\mathrm{Ev}_{\bld{m}}(X \to Z) = X(\bld{m}) \to
Z(\bld{m})$ and left adjoint
\[
F_{\bld{m}} (K \to Z(\bld{m})) = \left(\bld{n}\longmapsto \textstyle\coprod_{(\alpha \colon \bld{m}\to\bld{n}) \in \cI}\alpha_{!}(K\to Z(\bld{m}))\right).
\]

A morphism $X \to Y$ in $\ssetI/Z$ is defined to be 
\begin{itemize}
\item an absolute (resp. positive) contravariant level equivalence if for each object (resp. each positive object) $\bld{m}$ of $\cI$, the morphism $X(\bld{m})\to Y(\bld{m})$ is a contravariant weak equivalence in $\sset/Z(\bld{m})$, 
\item an absolute (resp. positive) contravariant level fibration if for each object (resp. each positive object) $\bld{m}$ of $\cI$, the morphism $X(\bld{m})\to Y(\bld{m})$ is a fibration in the contravariant model structure on $\sset/Z(\bld{m})$, and
\item an absolute (resp. positive) contravariant cofibration if it has the left lifting property with respect to all morphisms that are absolute (resp. positive) contravariant level fibrations and equivalences.  
\end{itemize}
\begin{lemma}
These classes of maps define an \emph{absolute} (resp. a \emph{positive}) \emph{contravariant level model structure} on $\ssetI/Z$ which is  simplicial, combinatorial, tractable and left proper. \end{lemma}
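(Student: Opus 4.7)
The plan is to obtain both model structures by applying Kan's recognition theorem for cofibrantly generated model categories, transferring the contravariant model structures on the slices $\sset/Z(\bld{m})$ levelwise along the adjunctions $F_{\bld{m}} \dashv \mathrm{Ev}_{\bld{m}}$ of~\eqref{eq:adjunction-free-ev}. Choose generating cofibrations $I_{\bld{m}}$ and generating acyclic cofibrations $J_{\bld{m}}$ for the contravariant model structure on $\sset/Z(\bld{m})$; since every object of $\sset/Z(\bld{m})$ is cofibrant, these can be arranged to have cofibrant domains. In the absolute case let $I = \bigcup_{\bld{m} \in \cI} F_{\bld{m}}(I_{\bld{m}})$ and $J = \bigcup_{\bld{m} \in \cI} F_{\bld{m}}(J_{\bld{m}})$, and in the positive case restrict both unions to $\bld{m} \in \cI_+$. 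By adjunction, a morphism in $\ssetI/Z$ has the right lifting property with respect to $I$ (resp.~$J$) if and only if its value at each (positive) $\bld{m}$ is a contravariant acyclic fibration (resp.~contravariant fibration), which matches the definitions of the proposed acyclic level fibrations and level fibrations.

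The crux of the argument, and the main obstacle, is to verify that every $J$-cell is simultaneously a cofibration and a level equivalence. Since colimits in $\ssetI/Z$ are formed levelwise, applying $\mathrm{Ev}_{\bld{n}}$ to a pushout of $F_{\bld{m}}(j)$ with $j \in J_{\bld{m}}$ produces a pushout of
\[
\mathrm{Ev}_{\bld{n}} F_{\bld{m}}(j) \;=\; \coprod_{\alpha \in \cI(\bld{m}, \bld{n})} \alpha_{!}(j)
\]
in $\sset/Z(\bld{n})$. By Lemma~\ref{lem:general-Q-adjunction-on-contravariant}, each $\alpha_{!}$ is the left adjoint of a Quillen pair between contravariant model structures, so $\alpha_{!}(j)$ is an acyclic contravariant cofibration; coproducts, pushouts and transfinite compositions of such maps remain acyclic cofibrations. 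Hence $\mathrm{Ev}_{\bld{n}}$ sends every $J$-cell to a contravariant acyclic cofibration, which is exactly what is required.

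With the key verification in hand, Kan's recognition theorem produces the claimed model structure---the small object argument is available because $\ssetI/Z$ is locally presentable. Combinatoriality follows from presentability together with the sets $I$ and $J$, while tractability follows from the cofibrant domains of the $I_{\bld{m}}$ together with the fact that $F_{\bld{m}}$ preserves colimits. Left properness reduces to the levelwise statement since pushouts are computed levelwise and the contravariant model structure on each $\sset/Z(\bld{m})$ is left proper. Finally, $\ssetI/Z$ inherits a simplicial enrichment over $\sset$ from the standard one on $\ssetI$, and the pushout--product axiom for this enrichment reduces levelwise to the known simpliciality of each contravariant model structure on $\sset/Z(\bld{m})$.
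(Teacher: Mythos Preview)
Your proof is correct and follows essentially the same route as the paper: both transfer the levelwise contravariant model structures along the adjunctions $(F_{\bld{m}},\mathrm{Ev}_{\bld{m}})$, and both identify Lemma~\ref{lem:general-Q-adjunction-on-contravariant} as the key input making $\alpha_{!}$ left Quillen. The paper packages the verification by citing Barwick's lifting result for the product adjunction $\prod_{\bld{m}}\sset/Z(\bld{m}) \rightleftarrows \ssetI/Z$, whereas you spell out the $J$-cell check for Kan's recognition theorem by hand; the content is the same.
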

\begin{proof}
  The key observation is that by
  Lemma~\ref{lem:general-Q-adjunction-on-contravariant}, the
  adjunction~\eqref{eq:adjunction-from-alpha} is a Quillen adjunction
  with respect to the contravariant model structures. With this
  observation, the existence of the absolute contravariant level model
  structure follows by a standard lifting argument using the
  adjunction
  \[ 
  \textstyle\prod_{\bld{m} \in \cI}\sset/Z(\bld{m}) \rightleftarrows
  \ssetI/Z
  \]
  induced by the adjunctions $(F_{\bld{m}},\mathrm{Ev}_{\bld{m}})$
  from~\eqref{eq:adjunction-free-ev} and the product model structure
  on the codomain; compare~\cite[Theorem 2.28]{barwick_left-right}. If
  $I_{Z(\bld{m})}$ is a set of generating cofibrations for
  $\sset/Z(\bld{m})$, then $\{ F_{\bld{m}}(i) \,|\, \bld{m} \in \cI,
  i\in I_{Z(\bld{m}}\}$ is a set of generating cofibrations for the
  absolute contravariant level model structure, and similarly for the
  generating acyclic cofibrations. The model structure is obviously
  tractable, and it is simplicial and left proper since
  $\sset/Z(\bld{m})$ is.

  In the positive case, we index the above product by the objects of
  $\cI_+$ instead.
\end{proof}

The contravariant model structure on $\sset/Z(\bld{m})$ is cofibrantly
generated and left proper. Since its cofibrations are the
monomorphisms, we may use \[I_{Z(\bld{m})}=\{(K \to Z(\bld{m})) \to
(L\to Z(\bld{m})) \;|\; (K \to L) = (\partial\Delta^n \hookrightarrow
\Delta^n) \}\] as a set of generating cofibrations of
$\sset/Z(\bld{m})$.  Let $W_{Z(\bld{m})}$ be the set of objects in
$\sset/Z(\bld{m})$ given by the domains and codomains of
$I_{Z(\bld{m})}$. By~\cite[Proposition A.5]{Dugger_replacing}, a map
$U\to V$ of fibrant objects in the contravariant model structure on
$\sset/Z(\bld{m})$ is a contravariant weak equivalence if and only if
the induced morphism of simplicial mapping spaces $
\Map_{Z(\bld{m})}(K,U) \to \Map_{Z(\bld{m})}(K,V) $ is a weak homotopy
equivalence of simplicial sets for every object $K\to Z(\bld{m})$ in
$W_Z(\bld{m})$.  For an object $K \to Z(\bld{m})$ in $W_Z(\bld{m})$
and a morphism $\alpha \colon \bld{m} \to \bld{n}$ in $\cI$, we let
\[ 
F_{\bld{n}}(\alpha_{!}(K)) \to F_{\bld{m}}(K)
\] 
be the morphism in $\ssetI/Z$ that is adjoint to the inclusion
\[
\alpha_{!}(K) \hookrightarrow \textstyle\coprod_{(\beta \colon \bld{m}\to\bld{n}) \in \cI}\beta_{!}(K) = \mathrm{Ev}_{\bld{n}}(F_{\bld{m}}(K))
\]
of the summand indexed by $\alpha$. We write 
\begin{equation}
S^Z = \{ F_{\bld{n}}(\alpha_{!}(A)) \to F_{\bld{m}}(A) \;|\; (\alpha\colon \bld{m}\to\bld{n}) \in \cI,\; (A\to Z(\bld{m})) \in W_Z(\bld{m})\}
\end{equation}
for the set of all such maps and let $S^Z_{+}$ be the subset of $S^Z$
consisting those maps that come from $\alpha \in \cI_+$. 

\begin{proposition}\label{prop:contravariant-I}
  The left Bousfield localization of the absolute (resp. positive)
  contravariant level model structure on $\ssetI/Z$ with respect to
  $S^Z$ (resp. $S^Z_+$) exists. It is a simplicial, combinatorial,
  tractable and left proper model structure.\qed
\end{proposition}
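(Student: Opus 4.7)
The strategy is to invoke a standard existence theorem for left Bousfield localizations applied to the base model structure supplied by the preceding lemma. That lemma gives us the absolute (resp.\ positive) contravariant level model structure on $\ssetI/Z$ as a simplicial, combinatorial, tractable and left proper model category, and by definition $S^Z$ (resp.\ $S^Z_+$) is a small set of morphisms: $\cI$ is a small category and each $W_{Z(\bld{m})}$ is a small set of boundary inclusions over $Z(\bld{m})$. The hypotheses of \cite[Proposition A.3.7.3]{Lurie_HTT} (already used in the proof of Proposition~\ref{prop_zwei}) are therefore met, and that result directly produces a simplicial, combinatorial, and left proper left Bousfield localization in which the cofibrations coincide with the absolute (resp.\ positive) contravariant cofibrations and the fibrant objects are the level fibrant objects that are $S^Z$-local (resp.\ $S^Z_+$-local).

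The only property not immediately provided by \cite[Proposition A.3.7.3]{Lurie_HTT} is tractability. Since the cofibrations of the localized model structure are unchanged, the existing set of generating cofibrations with cofibrant domains still serves. For generating acyclic cofibrations with cofibrant domains, one invokes \cite[Corollary 2.8]{barwick_left-right}, which guarantees that a left Bousfield localization of a left proper tractable combinatorial model category at a set of morphisms is again tractable; this is precisely the reasoning already used in the proof of Proposition~\ref{prop_zwei}.

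There is no real obstacle here: the conceptual work has been carried out in the preceding lemma, which established that the level model structures fit into the framework required by the Bousfield localization machinery. The proposition is essentially a packaging result, combining \cite[Proposition A.3.7.3]{Lurie_HTT} with \cite[Corollary 2.8]{barwick_left-right} and applied to the level model structure.
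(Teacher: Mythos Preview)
Your proposal is correct and matches the paper's intent: the proposition in the paper carries a \qed immediately after the statement and no further argument, so the authors regard it as an immediate consequence of the preceding lemma together with the standard existence theorems for left Bousfield localization. Your write-up simply makes this explicit by citing \cite[Proposition~A.3.7.3]{Lurie_HTT} for existence and the simplicial, combinatorial, left proper properties; note that tractability is already immediate once you observe that the cofibrations (and hence the generating cofibrations with cofibrant domains) are unchanged under localization, so the appeal to \cite[Corollary~2.8]{barwick_left-right} for the generating acyclic cofibrations is not needed for the statement as formulated.
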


We refer to this model structure as the \emph{absolute}
(resp. \emph{positive}) \emph{contravariant $\cI$-model
  structure}. The weak equivalences in these model structures are
called \emph{absolute} (resp. \emph{positive}) $\cI$-equivalences. The
cofibrations are the same as in the respective level model structures.
An object $X \to Z$ is absolute (resp. positive) contravariant
$\cI$-fibrant if is absolute (resp. positive) contravariant level
fibrant an each $\alpha\colon \bld{m}\to\bld{n}$ in $\cI$ (resp. in
$\cI_+$) induces a contravariant weak equivalence $X(\bld{m})\to
\alpha^*(X(\bld{n}))$ in $\sset/Z(\bld{m})$.

The contravariant $\cI$-model structures are homotopy invariant in
level equivalences of the base:
\begin{lemma}\label{lem:hty-invariance-in-level-equiv}
  Let $Z \to Z'$ be a morphism in $\ssetI$. Then the induced adjunction
  $\ssetI/Z \rightleftarrows \ssetI/Z'$ is a Quillen adjunction with
  respect to the absolute and positive contravariant $\cI$-model
  structures. If  $Z \to Z'$
  is an absolute (resp. a positive) Joyal level equivalence, then
  it is a Quillen equivalence with respect to the absolute
  (resp. positive) contravariant $\cI$-model structures. 
\end{lemma}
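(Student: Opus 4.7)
The adjunction under consideration sends $X \to Z$ to the composite $X \to Z \to Z'$ on one side, and sends $Y \to Z'$ to the pullback $Z \times_{Z'} Y \to Z$ on the other. Both $f_!$ and $f^*$ are computed levelwise, so that $(f_! X)(\bld{m}) = f(\bld{m})_!(X(\bld{m}))$ and $(f^* Y)(\bld{m}) = Z(\bld{m}) \times_{Z'(\bld{m})} Y(\bld{m})$ for each $\bld{m}$ in $\cI$. By Lemma~\ref{lem:general-Q-adjunction-on-contravariant} applied at each level, this gives a Quillen adjunction between the absolute and positive contravariant level model structures, since cofibrations, fibrations, and weak equivalences in those structures are all defined levelwise via the adjunctions~\eqref{eq:adjunction-from-alpha} and~\eqref{eq:adjunction-free-ev}.

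To see that the adjunction descends to a Quillen adjunction on the contravariant $\cI$-model structures, I would invoke the universal property of left Bousfield localization: it suffices to show that $f_!$ sends the localizing set $S^Z$ (or $S^Z_+$ in the positive case) to $\cI$-equivalences in $\ssetI/Z'$. A direct computation using the explicit formulas for $F_{\bld{m}}$ and $\alpha_!$ shows that $f_!$ carries a generator $F_{\bld{n}}(\alpha_{!}(A)) \to F_{\bld{m}}(A)$ in $S^Z$ to the analogous map $F_{\bld{n}}(\alpha_{!}(A)) \to F_{\bld{m}}(A)$ in $S^{Z'}$, where the structure maps to $Z'$ are obtained by post-composition with $Z \to Z'$. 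Since all objects of the contravariant model structure on $\sset/Z(\bld{m})$ are cofibrant, the domains and codomains appearing in $S^Z$ are level-cofibrant, so this computes the left derived image and finishes the first assertion.

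For the second assertion, assume that $Z \to Z'$ is an absolute (respectively positive) Joyal level equivalence. Then each $f(\bld{m}) \colon Z(\bld{m}) \to Z'(\bld{m})$ is a Joyal equivalence (for $\bld{m}$ in $\cI$ or $\cI_+$), so Lemma~\ref{lem:general-Q-adjunction-on-contravariant} upgrades the levelwise adjunction to a Quillen equivalence on the contravariant model structures $\sset/Z(\bld{m}) \rightleftarrows \sset/Z'(\bld{m})$. Because weak equivalences and fibrations in the level model structures on $\ssetI/Z$ and $\ssetI/Z'$ are detected at each level, and because both functors $f_!$ and $f^*$ are computed levelwise, this immediately implies that $(f_!, f^*)$ is a Quillen equivalence between the level model structures. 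This step is entirely formal.

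The main step, and the genuine obstacle, is to pass from a Quillen equivalence on the level model structures to one on the localized $\cI$-model structures. For this I would appeal to the following standard criterion: given a Quillen equivalence $(F,G)$ that also defines a Quillen adjunction between left Bousfield localizations, the adjunction on the localizations is a Quillen equivalence provided that $G$ preserves the fibrant objects of the localized structures (cf.\ Hirschhorn, \emph{Model Categories and their Localizations}, 3.3.20). To verify this for $f^*$, suppose $Y \to Z'$ is contravariant $\cI$-fibrant. Then $f^*Y$ is level-fibrant since right fibrations pull back to right fibrations, and for any $\alpha\colon \bld{m} \to \bld{n}$ in $\cI$ (or $\cI_+$), the map $Y(\bld{m}) \to \alpha^* Y(\bld{n})$ is a contravariant weak equivalence in $\sset/Z'(\bld{m})$ between right fibrations over $Z'(\bld{m})$. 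Applying $f(\bld{m})^*$ and using that it is the right adjoint of a Quillen equivalence, Ken Brown's lemma shows that the induced map $(f^*Y)(\bld{m}) \to \alpha^*((f^*Y)(\bld{n}))$ is a contravariant weak equivalence in $\sset/Z(\bld{m})$. Hence $f^* Y$ is contravariant $\cI$-fibrant, which completes the proof.
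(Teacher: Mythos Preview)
Your proof is correct and follows essentially the same route as the paper: both first establish the Quillen adjunction on level model structures, then push it down to the localizations via $f_!(S^Z) \subseteq S^{Z'}$, and finally upgrade the level Quillen equivalence to a Quillen equivalence on the localizations. The only difference is cosmetic: for the last step the paper observes tersely that ``by adjunction, the $(Z \to Z')_!(S^Z)$-local objects coincide with the $S^{Z'}$-local objects,'' whereas you unpack this via Hirschhorn's criterion and an explicit Ken Brown argument showing that $f^*$ preserves contravariant $\cI$-fibrant objects---which is exactly what the paper's one-liner amounts to once decoded.
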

\begin{proof}
  We treat the absolute case, the positive case is similar. It is
  clear that the adjunction in question is a Quillen adjunction with
  respect to the absolute level model structure. Since $(Z \to
  Z')_{!}(S_Z)$ is a subset of $S_{Z'}$, there is an induced Quillen
  adjunction on the localizations.  Using
  Lemma~\ref{lem:general-Q-adjunction-on-contravariant}, it is also
  clear that an absolute Joyal level equivalence induces a Quillen
  equivalence with respect to the absolute contravariant level model
  structures. To see that it is a Quillen equivalence, we note that by
  adjunction, the $(Z \to Z')_{!}(S_Z)$-local objects coincide with
  the $S_{Z'}$-local objects. 
\end{proof}

We write $(-)_{\cI} = \colim_{\cI}$ for the colimit over $\cI$ and
note that the adjunction $(-)_{\cI}\colon \ssetI \rightleftarrows
\sset \colon \const_{\cI}$ induces adjunctions of overcategories
\begin{equation}\label{eq:colimI-constI-chain}
\ssetI/Z \rightleftarrows \ssetI/(\const_{\cI}(Z_{\cI})) \rightleftarrows \sset/Z_{\cI}.
\end{equation}

\begin{lemma}\label{lem:colimI-constI-abs-Q-equiv}
  Let $Z$ be cofibrant and fibrant in the absolute Joyal $\cI$-model
  structure on $\ssetI$. Then the composite adjunction 
$\ssetI/Z  \rightleftarrows \sset/Z_{\cI}$
 is a Quillen equivalence with
  respect to the absolute contravariant $\cI$-model structure on
  $\ssetI/Z$ and the contravariant model structure on $\sset/Z_{\cI}$.
\end{lemma}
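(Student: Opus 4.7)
The plan is to factor the composite adjunction as in~\eqref{eq:colimI-constI-chain}, namely $\ssetI/Z \rightleftarrows \ssetI/\const_\cI Z_\cI \rightleftarrows \sset/Z_\cI$, and to show each piece is a Quillen equivalence. For the first piece, Lemma~\ref{lem:hty-invariance-in-level-equiv} reduces the task to showing the unit $Z \to \const_\cI Z_\cI$ is an absolute Joyal level equivalence. To see this, pick a Joyal fibrant replacement $Z_\cI \to (Z_\cI)^{\mathrm{fib}}$ in $\sset$. Since $Z$ is cofibrant and $\colim_\cI \dashv \const_\cI$ is a Quillen equivalence by~\eqref{eq:pos-abs-Joyal-chain}, the derived unit $Z \to \const_\cI(Z_\cI)^{\mathrm{fib}}$ is an absolute Joyal $\cI$-equivalence. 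Both source and target are absolute Joyal $\cI$-fibrant ($Z$ by hypothesis, $\const_\cI(Z_\cI)^{\mathrm{fib}}$ because its structure maps are identities and $(Z_\cI)^{\mathrm{fib}}$ is a quasi-category), so by the absolute analogue of the fact recalled after~\eqref{eq:pos-abs-Joyal-chain}, the derived unit is a Joyal level equivalence. Combined with the level equivalence $\const_\cI Z_\cI \to \const_\cI(Z_\cI)^{\mathrm{fib}}$ and two-out-of-three, this shows $Z \to \const_\cI Z_\cI$ is a Joyal level equivalence.

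For the second piece, write $K = Z_\cI$ and show directly that $\colim_\cI \dashv \const_\cI \colon \ssetI/\const_\cI K \rightleftarrows \sset/K$ is a Quillen equivalence. This is a Quillen adjunction because $\colim_\cI F_\bld{m}(A \to K) = A \to K$, so generating (acyclic) level cofibrations go to (acyclic) cofibrations in $\sset/K$, and the maps in $S^{\const_\cI K}$ are sent to identities since $\alpha_! = \id$ for the constant base. A crucial observation is that on this slice $F_\bld{0}$ coincides with $\const_\cI$, and a quick check on cells shows that $\const_\cI$ is also left Quillen as the left adjoint of $\const_\cI \dashv \mathrm{Ev}_\bld{0}$. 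Thus $\const_\cI$ is simultaneously left and right Quillen, so it preserves all weak equivalences. Since the counit $\colim_\cI \const_\cI = \id$ is the identity (as $\cI$ is connected), it remains to show that for every cofibrant $Y$, the unit $\eta_Y \colon Y \to \const_\cI \colim_\cI Y$ is an absolute contravariant $\cI$-equivalence.

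The last claim I would prove by cellular induction. On a generating cell $Y = F_\bld{c}(A \to K)$ with $A \in W_K$, the unit $F_\bld{c}(A) \to F_\bld{0}(A) = \const_\cI A$ is precisely the element of $S^{\const_\cI K}$ associated to $\alpha \colon \bld{0} \to \bld{c}$ (using $\alpha_! A = A$), hence an absolute contravariant $\cI$-equivalence by construction. The composite $\const_\cI \circ \colim_\cI$ is a composition of left adjoints and so preserves colimits and cofibrations; the cube lemma in our left proper model category then propagates $\eta$ being a weak equivalence through pushouts along generating cofibrations, while stability of weak equivalences under transfinite composition along cofibrations handles the transfinite step. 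The main obstacle is this cellular induction; it hinges both on left properness of the absolute contravariant $\cI$-model structure and on $\const_\cI$ being both a left and a right adjoint on this slice.
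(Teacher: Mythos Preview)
Your proof is correct. The first piece---showing that $Z \to \const_\cI Z_\cI$ is an absolute Joyal level equivalence via the derived unit and the fibrant-to-fibrant comparison---matches the paper's argument (and is in fact slightly more explicit about why the $\cI$-equivalence upgrades to a level equivalence).

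For the second piece, your approach differs from the paper's, though both rest on the same key observation that on the slice over $\const_\cI K$ one has $F_{\bld{0}} \cong \const_\cI$, so $\const_\cI$ is simultaneously left and right Quillen and preserves all weak equivalences. You verify the unit is an equivalence on cofibrant objects by cellular induction: on a free cell $F_{\bld{c}}(A)$ the unit is precisely a map in $S^{\const_\cI K}$, and the cube lemma together with stability under transfinite composition propagates this through cell attachments. The paper instead passes to a \emph{fibrant} replacement $X'$ of the cofibrant object $X$ and observes that the counit $F_{\bld{0}}\mathrm{Ev}_{\bld{0}}(X') \to X'$ is a level equivalence because $X'$ is $S^Z$-local; since $F_{\bld{0}}\mathrm{Ev}_{\bld{0}}(X') \cong \const_\cI X'(\bld{0})$ and the unit on a constant object is an isomorphism, naturality and two-out-of-three finish the argument. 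Your route avoids fibrant replacement at the cost of the cellular bookkeeping, while the paper's route is shorter and exploits directly the defining local objects of the $\cI$-model structure. Both are clean; the paper's is perhaps more conceptual, yours more elementary.
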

\begin{proof}
  Since $Z$ is cofibrant and fibrant, the Quillen equivalence~\eqref{eq:pos-abs-Joyal-chain} shows that the adjunction unit $Z \to
  \const_{\cI}(Z_{\cI})$ is an absolute Joyal level equivalence. Hence the
  first adjunction in~\eqref{eq:colimI-constI-chain} is a Quillen
  equivalence by Lemma~\ref{lem:hty-invariance-in-level-equiv}. It
  follows from the definitions that the second adjunction is a Quillen
  adjunction whose right adjoint detects weak equivalences between
  fibrant objects. Hence it is sufficient to show that the derived
  adjunction unit is an absolute contravariant $\cI$-equivalence. Let
  $X \to \const_{\cI}(Z_{\cI})$ be a cofibrant object in the absolute
  contravariant $\cI$-model structure. A fibrant replacement $X \to
  X'$ and the adjunction counit of
  $(F_{\bld{0}},\mathrm{Ev}_{\bld{0}})$ provide a chain of absolute
  contravariant $\cI$-equivalences between cofibrant objects
  \[\xymatrix{X \ar@{>->}[r]^{\sim}& X ' &F_{\bld{0}}
    \mathrm{Ev}_{\bld{0}}(X'). \ar[l]_-{\sim}}\]
  Since $\bld{0}$ is initial in $\cI$, there is an isomorphism
  $F_{\bld{0}} \mathrm{Ev}_{\bld{0}}(X') \iso \const_{\cI}
  X'(\bld{0})$. The claim follows because the evaluation of the
  adjunction unit of $((-)_{\cI},\const_{\cI})$ on $\const_{\cI}
  X'(\bld{0})$ is even an isomorphism and $\const_{\cI}$ preserves
  weak equivalence between all objects.
\end{proof}

\begin{proposition}\label{prop:pos-abs-Q-equiv}
  For every absolute Joyal $\cI$-fibrant $Z$ in $\ssetI$, the
  identity functors form a Quillen equivalence
  $(\ssetI/Z)_{\mathrm{pos}} \rightleftarrows (\ssetI/Z)_{\mathrm{abs}}$
  with respect to the positive and absolute contravariant $\cI$-model
  structures.
\end{proposition}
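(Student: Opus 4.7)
The plan splits the proof into two steps: establishing that the identity functors form a Quillen adjunction, and then upgrading this to a Quillen equivalence via two-out-of-three applied along the chain of adjunctions through $\sset/Z_{\cI}$ of Lemma~\ref{lem:colimI-constI-abs-Q-equiv}.

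For the Quillen adjunction, the key observation is that $\cI(\bld{m}, \bld{0}) = \emptyset$ whenever $\bld{m}$ is positive, so for such $\bld{m}$ the object $F_{\bld{m}}(K)$ is empty at level $\bld{0}$ and the generating positive contravariant (acyclic) cofibration $F_{\bld{m}}(i)$ is the identity between initial objects there. At positive levels it is an absolute level (acyclic) cofibration by definition. Combined with the inclusion of localization sets $S^{Z}_{+} \subseteq S^{Z}$, this shows that every positive (acyclic) cofibration in the Bousfield-localized model structure is an absolute (acyclic) cofibration.

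For the Quillen equivalence, I first reduce to the case where $Z$ is both cofibrant and fibrant in the absolute Joyal $\cI$-model structure. Any cofibrant-fibrant replacement $\tilde Z$ of $Z$ in the absolute Joyal $\cI$-model structure is absolutely Joyal $\cI$-equivalent to $Z$, and since both objects are absolutely Joyal $\cI$-fibrant, the equivalence is in fact an absolute Joyal level equivalence (by the analog of the statement about positive $\cI$-fibrant objects recalled before~(\ref{eq:pos-abs-Joyal-chain})). Lemma~\ref{lem:hty-invariance-in-level-equiv} then yields Quillen equivalences $\ssetI/\tilde Z \rightleftarrows \ssetI/Z$ with respect to both the positive and absolute $\cI$-model structures, so it suffices to prove the statement for $\tilde Z$. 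With $Z$ now cofibrant and fibrant, Lemma~\ref{lem:colimI-constI-abs-Q-equiv} provides a Quillen equivalence $(\ssetI/Z)_{\mathrm{abs}} \rightleftarrows \sset/Z_{\cI}$, and two-out-of-three reduces the problem to showing that the composite $(\ssetI/Z)_{\mathrm{pos}} \rightleftarrows \sset/Z_{\cI}$ is also a Quillen equivalence.

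This composite factors through $(\ssetI/\const_{\cI}(Z_{\cI}))_{\mathrm{pos}}$ via the adjunctions in~(\ref{eq:colimI-constI-chain}). The first adjunction is a positive Quillen equivalence by Lemma~\ref{lem:hty-invariance-in-level-equiv}, since the unit $Z \to \const_{\cI}(Z_{\cI})$ is an absolute, hence positive, Joyal level equivalence under our hypotheses on $Z$. The second adjunction is a Quillen adjunction whose right adjoint $\const_{\cI}$ sends a right fibration $Y \to Z_{\cI}$ to an object whose structure maps $\alpha_{*}$ are all identities, hence absolutely (and in particular positively) contravariant $\cI$-fibrant. Showing that this second adjunction is a Quillen equivalence in the positive setting is the main obstacle: the absolute case of Lemma~\ref{lem:colimI-constI-abs-Q-equiv} relies on the adjunction $(F_{\bld{0}}, \mathrm{Ev}_{\bld{0}})$, which is unavailable because $\bld{0}$ is not positive. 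I plan to circumvent this by using $(F_{\bld{1}}, \mathrm{Ev}_{\bld{1}})$ together with the contravariant Quillen equivalence $\sset/Z(\bld{1}) \rightleftarrows \sset/Z_{\cI}$ induced by the Joyal equivalence $Z(\bld{1}) \to Z_{\cI}$ (which holds because $Z$ is absolutely Joyal $\cI$-fibrant) via Lemma~\ref{lem:general-Q-adjunction-on-contravariant}, and then applying a cofinality argument that replaces the initial object $\bld{0}$ by the constant functor.
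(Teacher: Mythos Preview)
Your reduction to cofibrant--fibrant $Z$ and then to $Z=\const_{\cI}T$ is fine, and the Quillen adjunction part is correct. The gap is in the final step: showing that the adjunction $(\ssetI/\const_{\cI}T)_{\mathrm{pos}} \rightleftarrows \sset/T$ is a Quillen equivalence. Your proposed tools do not close this.

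The difficulty is that the positive contravariant $\cI$-equivalences are \emph{defined} as $S^{Z}_{+}$-local equivalences, and you give no way to recognize them intrinsically. Concretely, mimicking the proof of Lemma~\ref{lem:colimI-constI-abs-Q-equiv} with $(F_{\bld{1}},\mathrm{Ev}_{\bld{1}})$ in place of $(F_{\bld{0}},\mathrm{Ev}_{\bld{0}})$ fails: for positive $\cI$-fibrant $X'$ the counit $F_{\bld{1}}\mathrm{Ev}_{\bld{1}}(X')\to X'$ is, in level $\bld{n}$, the map $\coprod_{n} X'(\bld{1})\to X'(\bld{n})$, which is \emph{not} a level equivalence for $n\geq 2$. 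To know it is nevertheless a positive $\cI$-equivalence you would already need an external characterization of those equivalences. Likewise, to verify the derived unit you would need to know that $F_{\bld{1}}K\to\const_{\cI}K$ is a positive $\cI$-equivalence; testing against local $Y$ gives $\Map_T(K,Y(\bld{1}))$ versus $\Map_T(K,Y(\bld{0}))$, and positive $\cI$-fibrancy imposes no condition on $Y(\bld{0})$. Your ``cofinality argument that replaces $\bld{0}$ by the constant functor'' is too vague to bridge this.

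The paper avoids this trap by comparing the two model structures \emph{directly} rather than routing through $\sset/T$. After reducing to $Z=\const_{\cI}T$ it identifies $\ssetI/\const_{\cI}T$ with $(\sset/T)^{\cI}$ and invokes Dugger's homotopy-colimit model structure \cite[Theorem~5.1]{Dugger_replacing} to obtain the intrinsic description: absolute contravariant $\cI$-equivalences are exactly the $\hocolim_{\cI}$-equivalences. Homotopy cofinality of $\cI_+\hookrightarrow\cI$ then shows that positive level equivalences are $\hocolim_{\cI}$-equivalences, whence (together with $S^{Z}_{+}\subset S^{Z}$) every positive $\cI$-equivalence is an absolute one; the converse follows by restricting to $\cI_+$ and applying Dugger's theorem there. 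Note that in the paper's logic, the Quillen equivalence $(\ssetI/Z)_{\mathrm{pos}}\rightleftarrows \sset/Z_{\cI}$ that you are trying to establish directly is deduced as Corollary~\ref{cor:colimI-constI-pos-Q-equiv} \emph{from} the present proposition, not the other way around. Your strategy can be salvaged, but only by importing exactly these two ingredients (Dugger's characterization and the cofinality of $\cI_+\to\cI$), at which point it collapses into the paper's argument.
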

\begin{proof}
  Let $Z^c \to Z$ be a cofibrant replacement in the absolute Joyal
  $\cI$-model structure and let $Z^c \to \const_{\cI}(Z^c_{\cI})$ be
  the adjunction unit. Since these two maps are absolute Joyal level
  equivalences, Lemma~\ref{lem:hty-invariance-in-level-equiv} and the
  two out of three property for Quillen equivalences reduce the claim
  to the case where $Z = \const_{\cI}T$ for a simplicial set $T$. 

  The category $\ssetI/(\const_{\cI}T)$ is equivalent to the category
  $(\sset/T)^{\cI}$ of $\cI$-diagrams in $\sset/T$. Under this
  equivalence, the absolute contravariant $\cI$-model structure
  corresponds to the homotopy colimit model structure on
  $(\sset/T)^{\cI}$ provided by~\cite[Theorem
  5.1]{Dugger_replacing}. The cited theorem implies that the weak
  equivalences in the absolute contravariant $\cI$-model structure are
  the maps that induce contravariant weak equivalences under
  $\hocolim_{\cI}\colon (\sset/T)^{\cI} \to \sset/T$.

  The argument for comparing the model structures now works as
  in~\cite[Proposition 2.3]{Kodjabachev-S_Joyal-I}: The inclusion
  $\cI_+ \to \cI$ is homotopy cofinal~\cite[Proof of Corollary
  5.9]{Sagave-S_diagram}, and hence every positive contravariant level
  equivalence is an $\hocolim_{\cI}$-equivalence. Together with
  $S^{\const_{\cI}\!T}_{+} \subset S^{\const_{\cI}\!T}$, this shows
  that every positive contravariant $\cI$-equivalence is an absolute
  contravariant $\cI$-equivalence. For the converse, it suffices to
  show that a $\hocolim_{\cI}$-equivalence of positive contravariant
  $\cI$-fibrant objects is a positive contravariant $\cI$-equivalence. Using
  again that $\cI_+ \to \cI$ is homotopy cofinal, this follows by
  restricting along $\cI_+ \to \cI$ and applying~\cite[Theorem
  5.1(a)]{Dugger_replacing} in $(\sset/T)^{\cI_+}$.
\end{proof}

\begin{corollary}\label{cor:colimI-constI-pos-Q-equiv}
If $Z$ is absolute Joyal cofibrant and positive Joyal $\cI$-fibrant, then
$\ssetI/Z  \rightleftarrows \sset/Z_{\cI}$
is a Quillen equivalence with respect to the positive contravariant
$\cI$-model structure on $\ssetI/Z$ and the contravariant model
structure on $\sset/Z_{\cI}$.
\end{corollary}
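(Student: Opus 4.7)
The plan is to reduce to the absolute case covered by Lemma~\ref{lem:colimI-constI-abs-Q-equiv} by replacing $Z$ with an absolute Joyal $\cI$-fibrant model, and then to transfer the resulting Quillen equivalence from the absolute to the positive contravariant $\cI$-model structure by means of Proposition~\ref{prop:pos-abs-Q-equiv}.

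First, I would factor the unique map $Z \to *$ in the absolute Joyal $\cI$-model structure as an acyclic cofibration $i\colon Z \to Z'$ followed by a fibration. Since $Z$ is absolute Joyal cofibrant and $i$ is a cofibration, $Z'$ is again absolute Joyal cofibrant; by construction it is absolute Joyal $\cI$-fibrant, and hence in particular positive Joyal $\cI$-fibrant. As $Z$ and $Z'$ are both positive Joyal $\cI$-fibrant, the Joyal $\cI$-equivalence $i$ is automatically a positive Joyal level equivalence. Consider then the square of left Quillen functors
\[
\xymatrix@-1pc{
\ssetI/Z \ar[r] \ar[d] & \ssetI/Z' \ar[d] \\
\sset/Z_{\cI} \ar[r] & \sset/Z'_{\cI}
}
\]
in which the horizontals are induced by $i$, the verticals by the chain~\eqref{eq:colimI-constI-chain}, the upper row carries the positive contravariant $\cI$-model structure, and the lower row the contravariant model structure.

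Three of the four adjunctions in this square are Quillen equivalences. The top one is by Lemma~\ref{lem:hty-invariance-in-level-equiv}, since $i$ is a positive Joyal level equivalence. The bottom one is by Lemma~\ref{lem:general-Q-adjunction-on-contravariant}, because $\colim_{\cI}\colon \ssetI_{\mathrm{abs}}\to\sset_{\mathrm{Joyal}}$ is left Quillen (see~\eqref{eq:pos-abs-Joyal-chain}) and Ken Brown's lemma applied to the absolute $\cI$-equivalence $i$ of absolute cofibrant objects shows that $Z_{\cI}\to Z'_{\cI}$ is a Joyal equivalence in $\sset$. The right-hand vertical one is obtained by combining Lemma~\ref{lem:colimI-constI-abs-Q-equiv}, applied to $Z'$ which is absolute cofibrant and absolute fibrant, with Proposition~\ref{prop:pos-abs-Q-equiv}, applied to the absolute Joyal $\cI$-fibrant $Z'$, to pass from the absolute to the positive contravariant $\cI$-model structure on $\ssetI/Z'$. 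The two-out-of-three property for Quillen equivalences in a commutative square then forces the left vertical adjunction to be a Quillen equivalence as well, which is the statement.

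The main obstacle I anticipate is the bookkeeping around the replacement $Z \to Z'$: one must simultaneously arrange that $Z'$ inherits absolute cofibrancy from $Z$ (so that Lemma~\ref{lem:colimI-constI-abs-Q-equiv} and Ken Brown's lemma apply) and that $Z \to Z'$ is read as a positive Joyal level equivalence (so that Lemma~\ref{lem:hty-invariance-in-level-equiv} applies). Both become available from a single factorization in the absolute Joyal $\cI$-model structure once one exploits that absolute $\cI$-fibrancy implies positive $\cI$-fibrancy. After the square is in place, the conclusion is a formal two-out-of-three argument.
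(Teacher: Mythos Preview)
Your proof is correct and follows the same overall strategy as the paper: replace $Z$ by an object $Z'$ that is both absolute Joyal cofibrant and absolute Joyal $\cI$-fibrant, invoke Lemma~\ref{lem:colimI-constI-abs-Q-equiv} and Proposition~\ref{prop:pos-abs-Q-equiv} at $Z'$, and transfer back to $Z$ via Lemma~\ref{lem:hty-invariance-in-level-equiv} and a two-out-of-three argument.

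The only difference is in the choice of $Z'$. The paper takes $Z' = \const_{\cI}\bigl((Z_{\cI})^{\mathrm{Joyal\text{-}fib}}\bigr)$, using the derived unit of the $(\colim_{\cI},\const_{\cI})$ adjunction to see that $Z\to Z'$ is a positive Joyal level equivalence; you instead take an absolute Joyal $\cI$-fibrant replacement of $Z$ via an acyclic cofibration. Your choice makes the absolute cofibrancy of $Z'$ immediate and forces you to check separately that $Z_{\cI}\to Z'_{\cI}$ is a Joyal equivalence (which you do via Ken Brown, though since $i$ is already an acyclic cofibration this is even simpler). The paper's choice makes $Z'_{\cI}=(Z_{\cI})^{\mathrm{Joyal\text{-}fib}}$ tautologically Joyal equivalent to $Z_{\cI}$, while absolute cofibrancy of $Z'$ follows from $\const_{\cI}\cong F_{\bld{0}}$. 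Either route works; your explicit commutative square makes the two-out-of-three step more transparent than the paper's somewhat compressed formulation.
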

\begin{proof}
  Since the derived adjunction unit $Z \to
  \const_{\cI}((Z_{\cI})^{\mathrm{Joyal-fib}}) = Z'$ is a positive level
  equivalence, the adjunction $\ssetI/Z \rightleftarrows
  \ssetI/Z' $ is a Quillen equivalence with
  respect to the positive contravariant $\cI$-model structure by
  Lemma~\ref{lem:hty-invariance-in-level-equiv}. Because
  $Z'$ is cofibrant and
  fibrant in the absolute Joyal $\cI$-model structure,
  Proposition~\ref{prop:pos-abs-Q-equiv} and
  Lemma~\ref{lem:colimI-constI-abs-Q-equiv} show the claim.
\end{proof}

Let $N$ be a commutative monoid object in $(\ssetI,\boxtimes)$. Then
the overcategory $\ssetI/N$ inherits a symmetric monoidal product 
\[(X \to N) \boxtimes (Y \to N) = (X\boxtimes Y \to N\boxtimes N \to
N) \] from the symmetric monoidal structure of $N$ and the
multiplication of $N$. 

The following result is a key step in the proof of our main result. 
\begin{theorem}\label{thm_covariant}
  Let $\cE$ be an $E_{\infty}$ operad in $\sset_{\mathrm{Joyal}}$ and
  let $M$ be an $\cE$-algebra.  Then there is a chain of
  Quillen equivalences of simplicial, combinatorial and left proper
  model categories
\[
  \ssetI/M^{\mathrm{rig}} \leftrightarrows \ssetI/M^{c} \rightleftarrows  \ssetI/\const_{\cI}M \rightleftarrows \sset/M
\]
relating $\sset/M$ with the contravariant model structure and the
symmetric monoidal model category $\ssetI/M^{\mathrm{rig}}$ with the
positive contravariant $\cI$-model structure. The chain is natural
with respect to $M$.
\end{theorem}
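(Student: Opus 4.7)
The plan is to establish the three Quillen equivalences one by one and then treat the symmetric monoidal structure on $\ssetI/M^{\mathrm{rig}}$ separately. The two leftmost Quillen equivalences arise directly from the chain $\Phi^*(M^{\mathrm{rig}}) \ot M^c \to \const_{\cI}M$ supplied by Corollary~\ref{cor:Einfty-rigidification}: forgetting the $\cE$-algebra structure, both maps are positive Joyal level equivalences in $\ssetI$, and Lemma~\ref{lem:hty-invariance-in-level-equiv} then yields that the induced pullback/pushforward adjunctions on overcategories are Quillen equivalences with respect to the positive contravariant $\cI$-model structures. For the rightmost Quillen equivalence $\ssetI/\const_{\cI}M \rightleftarrows \sset/M$, I would apply Corollary~\ref{cor:colimI-constI-pos-Q-equiv} to $Z = \const_{\cI}M$, using that $Z_\cI = M$; the positive Joyal $\cI$-fibrancy of $\const_{\cI}M$ reduces to Joyal fibrancy of $M$ (morphisms in $\cI_+$ act as $\id_M$), which is guaranteed by Corollary~\ref{cor:Einfty-rigidification} where $\const_{\cI}M$ is asserted to be positive fibrant, while the absolute Joyal cofibrancy of $\const_{\cI}M$ is the standard fact about constant $\cI$-diagrams also used implicitly in the proof of Corollary~\ref{cor:colimI-constI-pos-Q-equiv}. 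The simplicial, combinatorial, and left proper properties come from Proposition~\ref{prop:contravariant-I} together with the corresponding known properties of $\sset/M$.

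The substantial remaining task is to verify that the positive contravariant $\cI$-model structure on $\ssetI/M^{\mathrm{rig}}$ is symmetric monoidal with respect to the product $\boxtimes_{M^{\mathrm{rig}}}$ inherited from the commutative monoid structure of $M^{\mathrm{rig}}$. I would verify the pushout-product axiom in two stages. First, for the positive contravariant level model structure, the pushout-product for $\boxtimes$ reduces via the explicit sum-over-injections formula for Day convolution to the corresponding axiom for the contravariant model structure on slice categories of $\sset$, whose acyclic cofibration part is essentially Lemma~\ref{lem:Ktimes-preserves-acyclic-cof} combined with Corollary~\ref{cor:Ktimes-preserves-contravariant-we}. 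Second, for the passage to the localized $\cI$-model structure, by tractability it suffices to verify that $\boxtimes_{M^{\mathrm{rig}}}$ with the maps generating $S^{M^{\mathrm{rig}}}_{+}$ yields $S^{M^{\mathrm{rig}}}_{+}$-equivalences, which is enabled by the compatibility $F_{\bld{m}}(K) \boxtimes F_{\bld{n}}(L) \iso F_{\bld{m}\sqcup\bld{n}}(K \times L)$ between Day convolution and the free functors $F_{\bld{m}}$.

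The principal obstacle I expect is this last verification, since one has to track how the structure maps to $M^{\mathrm{rig}}$ interact with Day convolution and the pushout-product construction on the generators of $S^{M^{\mathrm{rig}}}_{+}$. Naturality in $M$ follows from the functoriality of the rigidification (up to the standard choices of cofibrant and fibrant replacements) and of the pullback/pushforward adjunctions along morphisms of bases.
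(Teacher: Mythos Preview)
Your treatment of the chain of Quillen equivalences matches the paper's proof exactly: the two leftmost adjunctions come from Lemma~\ref{lem:hty-invariance-in-level-equiv} applied to the positive level equivalences of Corollary~\ref{cor:Einfty-rigidification}, and the rightmost one from Corollary~\ref{cor:colimI-constI-pos-Q-equiv} using that $\const_{\cI}M \cong F_{\bld{0}}M$ is absolute Joyal $\cI$-cofibrant.

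For the pushout-product axiom your route diverges from the paper's. The paper does not argue in two stages (level model structure, then localization on the generators $S^{M^{\mathrm{rig}}}_+$). Instead it proves a single flatness statement, Proposition~\ref{prop:X-boxtimes-preserves}: for any absolute contravariant cofibrant $X$, the functor $X\boxtimes -$ preserves \emph{all} positive contravariant $\cI$-equivalences. The level part of that proof uses exactly the decomposition and Corollary~\ref{cor:Ktimes-preserves-contravariant-we} you invoke; the passage to $\cI$-equivalences is then handled not by analyzing $S^Z_+$ directly, but by transporting along the Quillen equivalence $\ssetI/N^c \rightleftarrows \sset/N^c_{\cI}$ of Corollary~\ref{cor:colimI-constI-pos-Q-equiv} and using that $X_{\cI}\times -$ preserves contravariant equivalences there. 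Corollary~\ref{cor:contravariant-I-monoidal} then reads off the pushout-product and monoid axioms.

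Your proposed second stage is plausible but not as immediate as you suggest: tensoring a map in $S^{M^{\mathrm{rig}}}_+$ with $F_{\bld{k}}(K)$ does yield a map of the shape $F_{\bld{k}\sqcup\bld{n}}((\id\sqcup\alpha)_!(K\times A)) \to F_{\bld{k}\sqcup\bld{m}}(K\times A)$, but $K\times A$ need not lie in $W_{M^{\mathrm{rig}}}(\bld{k}\sqcup\bld{m})$, so you still owe an argument that such maps are $S^{M^{\mathrm{rig}}}_+$-local equivalences for arbitrary sources. This can be done, but the paper's flatness argument sidesteps the issue and, as a bonus, the stronger Proposition~\ref{prop:X-boxtimes-preserves} is reused later for the monoid axiom and the symmetric $h$-monoidality needed in Proposition~\ref{prop:operad-lift}.
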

\begin{proof}
  Using the chain of positive level equivalences $M^{\mathrm{rig}} \ot
  M^c \to \const_{\cI}M$ from
  Corollary~\ref{cor:Einfty-rigidification} and the fact that
  $\const_{\cI}M \iso F_{\bld{0}}M$ is absolute Joyal $\cI$-cofibrant,
  the chain of Quillen equivalences is a consequence of
  Lemma~\ref{lem:hty-invariance-in-level-equiv} and
  Corollary~\ref{cor:colimI-constI-pos-Q-equiv}. It is shown in
  Corollary~\ref{cor:contravariant-I-monoidal} that $
  \ssetI/M^{\mathrm{rig}}$ satisfies the pushout product axiom.
\end{proof}

We need one more observation about the tensor product on $\ssetI/M^{\mathrm{rig}}$. We call an object in $\Ho(\ssetI/M^{\mathrm{rig}})$ representable if it corresponds 
to an object of the form $\Delta^0 \to M$ under the equivalence $\Ho(\ssetI/M^{\mathrm{rig}}) \simeq \Ho(\sset/M)$ induced by the chain of Quillen equivalences from  Theorem \ref{thm_covariant}. Note that these are 
precisely the objects which correspond to representable presheaves under the equivalence to presheaves on the $\infty$-category $M$.
\begin{lemma}\label{lemma_rep}
The tensor product of two representables in $\Ho(\ssetI/M^{\mathrm{rig}})$ is again representable.
\end{lemma}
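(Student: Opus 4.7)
The approach is to pick explicit cofibrant representatives of representables in $\ssetI/M^{\mathrm{rig}}$, compute the derived $\boxtimes$-product on them, and use the localizing set $S^{M^{\mathrm{rig}}}_{+}$ of Proposition~\ref{prop:contravariant-I} together with the positive Joyal fibrancy of $M^{\mathrm{rig}}$ to identify the result as representable.

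First I would argue that for $x \in M_0$, the corresponding representable in $\ssetI/M^{\mathrm{rig}}$ has the cofibrant model $F_{\bld{1}}(\Delta^0) \to M^{\mathrm{rig}}$ classified by any preimage $\tilde x \in M^{\mathrm{rig}}(\bld{1})_0$ of $x$ along the positive Joyal level equivalences $M^{\mathrm{rig}}(\bld{1}) \ot M^c(\bld{1}) \to M$. To verify this, lift $\tilde x$ to $\tilde x^c \in M^c(\bld{1})_0$ and apply the left-derived chain of left adjoints to $F_{\bld{1}}(\Delta^0) \to M^c$ classified by $\tilde x^c$: post-compose with $M^c \to \const_{\cI}M$ and apply $\colim_{\cI}$. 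Since $\bld{1}/\cI$ has an initial object, $\colim_{\cI} F_{\bld{1}}(\Delta^0) \iso \Delta^0$, yielding the representable $\Delta^0 \to M$ at $x$ in $\sset/M$.

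Next, the Day-type formula $F_{\bld{m}}(K) \boxtimes F_{\bld{n}}(L) \iso F_{\bld{m}\sqcup\bld{n}}(K\times L)$ together with the commutative monoid multiplication $\mu\colon M^{\mathrm{rig}}\boxtimes M^{\mathrm{rig}}\to M^{\mathrm{rig}}$ shows that the $\boxtimes$-product of the two representables at $x$ and $y$ is the object $F_{\bld{2}}(\Delta^0) \to M^{\mathrm{rig}}$ classified by $\mu(\tilde x,\tilde y) \in M^{\mathrm{rig}}(\bld{2})_0$. Both factors being cofibrant in $\ssetI/M^{\mathrm{rig}}$, this realizes the derived tensor product.

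It remains to identify this with a representable. Fix an injection $\alpha\colon \bld{1} \to \bld{2}$; since $M^{\mathrm{rig}}$ is positive Joyal level fibrant, $\alpha_{*}\colon M^{\mathrm{rig}}(\bld{1}) \to M^{\mathrm{rig}}(\bld{2})$ is a Joyal equivalence of quasicategories. Hence there exist $z \in M^{\mathrm{rig}}(\bld{1})_0$ and an equivalence-edge $e\colon \Delta^1 \to M^{\mathrm{rig}}(\bld{2})$ from $\alpha_{*}(z)$ to $\mu(\tilde x,\tilde y)$. Applying $F_{\bld{2}}$ to the endpoint inclusions $\{0\},\{1\}\hookrightarrow \Delta^1$ gives a zigzag connecting the objects $F_{\bld{2}}(\Delta^0)\to M^{\mathrm{rig}}$ classified by $\alpha_{*}(z)$ and by $\mu(\tilde x,\tilde y)$ through $F_{\bld{2}}(\Delta^1) \to M^{\mathrm{rig}}$ classified by $e$. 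Both legs are positive contravariant $\cI$-equivalences, because for each $\beta\colon\bld{2}\to\bld{n}$ in $\cI_+$ the map $\beta_{*}\colon M^{\mathrm{rig}}(\bld{2}) \to M^{\mathrm{rig}}(\bld{n})$ is a functor of quasicategories and so sends the equivalence $e$ to an equivalence. The localizing map $F_{\bld{2}}(\alpha_{!}\Delta^0) \to F_{\bld{1}}(\Delta^0)$ from $S^{M^{\mathrm{rig}}}_{+}$ then identifies $F_{\bld{2}}(\Delta^0) \to M^{\mathrm{rig}}$ classified by $\alpha_{*}(z)$ with the representable $F_{\bld{1}}(\Delta^0) \to M^{\mathrm{rig}}$ classified by $z$. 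The main obstacle is the input that an equivalence-edge $e$ in a quasicategory $T$ produces contravariant weak equivalences in $\sset/T$ on \emph{both} endpoint inclusions; this ultimately rests on the description of the contravariant model structure via representable right fibrations $T_{/t}$ and the fact that equivalent vertices of $T$ yield equivalent representables.
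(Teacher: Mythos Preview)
Your argument is correct, but it takes a longer route than necessary. The paper's proof rests on the single observation that the representables in $\Ho(\ssetI/M^{\mathrm{rig}})$ are modeled by the cofibrant objects $F_{\bld{k}}^{\cI}(\Delta^0)\to M^{\mathrm{rig}}$ for \emph{all} positive $\bld{k}$, not just $\bld{k}=\bld{1}$. Your own step~1 in fact proves this for general $\bld{k}$ with no change: the comma category $\bld{k}/\cI$ has an initial object, so $\colim_{\cI}F_{\bld{k}}^{\cI}(\Delta^0)\iso\Delta^0$, and the derived image in $\sset/M$ is again a vertex. Once that is known, closure under the monoidal product is immediate from the formula $F_{\bld{k}}^{\cI}(\Delta^0)\boxtimes F_{\bld{l}}^{\cI}(\Delta^0)\iso F_{\bld{k}\sqcup\bld{l}}^{\cI}(\Delta^0)$, and your step~3 becomes unnecessary.

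What your approach buys is an explicit unwinding of \emph{why} $F_{\bld{2}}(\Delta^0)$ is again representable, via the localizing maps in $S^{M^{\mathrm{rig}}}_{+}$ and the positive fibrancy of $M^{\mathrm{rig}}$: you exhibit by hand the zigzag identifying it with an $F_{\bld{1}}(\Delta^0)$. This is instructive, and the delicate point you flag (that both endpoint inclusions into an equivalence-edge are contravariant weak equivalences over a quasicategory) is genuine. But the paper sidesteps all of this by enlarging the class of cofibrant models for representables so that it is visibly closed under $\boxtimes$; the comparison with $\sset/M$ is then carried out once, uniformly in $\bld{k}$, rather than reduced back to $\bld{k}=\bld{1}$ after the fact.
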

\begin{proof}
It follows from the construction of $M^{\mathrm{rig}}$ and the chain of Quillen equivalences that the representables in $\Ho(\ssetI/M^{\mathrm{rig}})$ are represented by the cofibrant objects of the form 
$F_{\bld{k}}^{\cI}(\Delta^0) \to M$ with $\bld{k}$ an positive object of $\cI$. Since $F_{\bld{k}}^{\cI}(K)\boxtimes F_{\bld{l}}^{\cI}(L) \iso F_{\bld{k}\sqcup\bld{l}}^{\cI}(K\times L)$, 
this set of objects is closed under the monoidal product. 
\end{proof}

\subsection{Monoidal properties of the contravariant
  \texorpdfstring{$\cI$}{I}-model structure}
 The following proposition is the key tool for the homotopical analysis of the $\boxtimes$-product on $\ssetI/N$ for a commutative $N$. Both its statement and proof are analogous to~\cite[Proposition 8.2]{Sagave-S_diagram} and~\cite[Proposition 2.6]{Kodjabachev-S_Joyal-I}:

\begin{proposition}\label{prop:X-boxtimes-preserves}
  Let $N$ be a commutative monoid object in $\ssetI$. If $X \to N$ is
  absolute contravariant cofibrant, then $X\boxtimes - \colon \ssetI/N
  \to \ssetI/N$ preserves positive contravariant $\cI$-equivalences
  between arbitrary objects.
\end{proposition}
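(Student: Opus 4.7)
The plan is to adapt the strategy of \cite[Proposition 8.2]{Sagave-S_diagram} and \cite[Proposition 2.6]{Kodjabachev-S_Joyal-I} from the Kan and Joyal level settings to the contravariant setting, using the homotopical building blocks $K\times -$ and $\alpha_{!}$ established earlier. First, I would reduce to a single generator: because $X\boxtimes -$ preserves colimits and the positive contravariant $\cI$-model structure is combinatorial and left proper, the class of absolute contravariant cofibrant $X\to N$ for which $X\boxtimes -$ preserves positive contravariant $\cI$-equivalences is closed under transfinite compositions of pushouts along cofibrations and under retracts. It therefore suffices to verify the claim when $X = F_{\bld{m}}(A\to N(\bld{m}))$ for some $\bld{m}\in \cI$ and $A = \partial\Delta^n$ or $A = \Delta^n$.

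Next, I would unwind the tensor explicitly. Combining the adjunction $F_{\bld{m}}\dashv \mathrm{Ev}_{\bld{m}}$ with the definition of Day convolution yields a natural isomorphism
\[
\bigl(F_{\bld{m}}(A)\boxtimes Y\bigr)(\bld{n}) \iso \coprod_{\alpha\colon \bld{m}\sqcup \bld{l}\hookrightarrow \bld{n}} A\times Y(\bld{l})
\]
in $\sset/N(\bld{n})$, with the structure map on each summand induced by $A\to N(\bld{m})$, $Y(\bld{l})\to N(\bld{l})$, the multiplication of $N$, and $\alpha_{*}$. This exhibits $F_{\bld{m}}(A)\boxtimes -$, summand-wise, as a composition of the functors $A\times -$ from Corollary~\ref{cor:Ktimes-preserves-contravariant-we} with the base-change functors $\alpha_{!}$ from Lemma~\ref{lem:general-Q-adjunction-on-contravariant}, both of which preserve contravariant weak equivalences of simplicial sets over their respective bases.

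Finally, I would conclude via the description of positive contravariant $\cI$-equivalences in terms of $\hocolim_{\cI_+}$ that is implicit in the proof of Proposition~\ref{prop:pos-abs-Q-equiv}: by \cite[Theorem 5.1]{Dugger_replacing} applied after restriction along the homotopy cofinal inclusion $\cI_+\hookrightarrow \cI$, a map in $\ssetI/N$ is a positive contravariant $\cI$-equivalence if and only if it induces a contravariant weak equivalence after applying $\hocolim_{\cI_+}$. A Fubini-type identification
\[
\hocolim_{\cI_+}\bigl(F_{\bld{m}}(A)\boxtimes Y\bigr)\simeq A\times \hocolim_{\cI_+}Y
\]
in $\sset/\hocolim_{\cI_+}N$, which rests on the contractibility of the nerve of $\bld{m}\downarrow \cI_+$ and the Fubini theorem for homotopy colimits, then reduces the statement to a single application of Corollary~\ref{cor:Ktimes-preserves-contravariant-we}. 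The main obstacle will be carrying out this Fubini-type simplification in the over-$N$ setting: one has to check the compatibility of the structure maps to $N$ with the hocolim-identification and, crucially, that the $\bld{l}=\bld{0}$ summand (which depends on $Y(\bld{0})$, where a positive $\cI$-equivalence need not be a weak equivalence) contributes negligibly after restriction along the cofinal inclusion $\cI_+\hookrightarrow \cI$. This is exactly the role that positivity plays in the analogous statements in the cited references.
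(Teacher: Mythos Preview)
Your plan shares the key ingredients with the paper's proof but organizes them differently, and this reorganization creates two gaps you have not closed.

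First, the cell induction. To pass from $X=F_{\bld m}(A)$ to a general absolute cofibrant $X$ you need a gluing lemma for positive contravariant $\cI$-equivalences along the maps $F_{\bld m}(K)\boxtimes Y\to F_{\bld m}(L)\boxtimes Y$. These are only \emph{levelwise} cofibrations, not cofibrations in the positive $\cI$-model structure, so left properness does not directly apply. (That levelwise cofibrations are $h$-cofibrations is true and is argued later, at the start of the proof of Proposition~\ref{prop:operad-lift}, but you have not supplied such an argument here.) The paper sidesteps this entirely by running the cell induction only for the auxiliary claim that $X\boxtimes -$ preserves \emph{absolute contravariant level} equivalences: there the gluing lemma can be applied in each $\sset/N(\bld n)$ separately, and no $h$-cofibration statement for the $\cI$-model structure is needed.

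Second, the $\hocolim_{\cI_+}$ characterization you invoke via Dugger's theorem rests on identifying $\ssetI/N$ with $(\sset/T)^{\cI}$, which only holds when $N=\const_{\cI}T$; you flag the over-$N$ compatibility as the main obstacle but leave it open. The paper handles the passage from level to $\cI$-equivalences by a different route that avoids any Fubini argument. Having used the level-equivalence step to replace the $Y_i$ by absolute cofibrant objects, it factors $Y_2\to N$ through an absolute Joyal cofibrant $N^c$, transports the problem to $\ssetI/N^c$ and to $\ssetI/(X\boxtimes N^c)$ via Lemma~\ref{lem:hty-invariance-in-level-equiv}, and then exploits that the \emph{strict} colimit $\colim_{\cI}$ is monoidal, so that $(X\boxtimes Y)_{\cI}\cong X_{\cI}\times Y_{\cI}$. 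Combined with Corollaries~\ref{cor:Ktimes-preserves-contravariant-we} and~\ref{cor:colimI-constI-pos-Q-equiv}, this reduces the $\cI$-equivalence statement for arbitrary absolute cofibrant $X$ to a single application of $K\times -$ in $\sset$; no further reduction to $X=F_{\bld m}(A)$ is needed at this stage. Your worry about the $\bld l=\bld 0$ summand is legitimate for your route but dissolves in the paper's: the preliminary level-equivalence step is exactly what permits replacing the $Y_i$ by cofibrant objects whose strict $\cI$-colimits are homotopically correct.
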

\begin{proof}
  We begin by showing that if $Y_1 \to Y_2$ is an absolute
  contravariant level equivalence in $\ssetI/N$, then so is
  $X\boxtimes Y_1 \to X\boxtimes Y_2$. For this, we use a cell
  induction argument and first consider the case $X = F_{\bld{m}}(K)$.

  By~\cite[Lemma 5.6]{Sagave-S_diagram}, the map
  $(F_{\bld{m}}(K)\boxtimes (Y_1 \to Y_2))(\bld{n})$ is isomorphic to
  \begin{equation}\label{eq:FmK-boxtimes-decomposition}
    K \times (\colim_{\bld{m}\sqcup \bld{k}\to \bld{n}} 
    (Y_1(\bld{k}) \to   Y_2(\bld{k}))) 
  \end{equation} 
  where the colimit is taken over the comma category $(\bld{m}\sqcup -
  \downarrow \bld{n})$. Since each connected component of this comma
  category has a terminal object, we can choose a set $A$ of 
  morphisms $\alpha \colon \bld{m}\sqcup \bld{k}\to \bld{n}$ such
  that~\eqref{eq:FmK-boxtimes-decomposition} is isomorphic to
  \[
  \textstyle\coprod_{(\alpha \colon \bld{m}\sqcup \bld{k}\to \bld{n}) \in A} K\times (Y_1(\bld{k}) \to   Y_2(\bld{k})).
  \]
  Using Corollary~\ref{cor:Ktimes-preserves-contravariant-we}, it
  follows that each summand is a contravariant weak equivalence in
  $\sset/(K\times N(\bld{k}))$. Composing with the map
  \[K\times N(\bld{k}) \to N(\bld{m}) \times N(\bld{k}) \to N(\bld{n})\]
  induced by the morphism $\alpha \colon
  \bld{k}\sqcup\bld{m}\to\bld{n}$ indexing the summand, it follows
  that each summand is a contravariant weak equivalence in
  $\sset/N(\bld{n})$. Hence~\eqref{eq:FmK-boxtimes-decomposition} is a
  contravariant weak equivalence in $\sset/N(\bld{n})$.

  Next we assume that $F_{\bld{m}}(K)\to F_{\bld{m}}(L)$ is a
  generating cofibration in $\ssetI/N$, that $X_{\alpha+1}$ is the
  pushout of $F_{\bld{m}}(L)\ot F_{\bld{m}}(K)\to X_{\alpha}$ in
  $\ssetI/N$ and that $X_{\alpha}\boxtimes -$ preserves absolute
  contravariant level equivalences. By the above decomposition,
  $F_{\bld{m}}(K \to L)\boxtimes Y_i$ is a cofibration when evaluated
  at $\bld{n}$, and the gluing lemma in the left proper model category
  $\sset/N(\bld{n})$ shows that $X_{\alpha +1}\boxtimes (Y_1 \to Y_2)$
  is an absolute contravariant level equivalence in $\sset/N$. Since a
  general absolute contravariant cofibrant object $X$ is a retract of
  a colimit of a sequence of maps of this form, it follows that
  $X\boxtimes -$ preserves absolute contravariant level equivalences.

  We now turn to the statement of the proposition and assume that $Y_1
  \to Y_2$ is a positive contravariant $\cI$-equivalence. By applying
  the previous argument to cofibrant replacements of the $Y_i$, we may
  assume that the $Y_i$ are absolute contravariant cofibrant. Let
  $\xymatrix@1{Y_2 \ar@{ >->}[r]& N^c \ar@{->>}[r]^{\sim}& N}$ be a
  factorization in the absolute Joyal model structure. By
  Lemma~\ref{lem:hty-invariance-in-level-equiv}, $Y_1\to Y_2$ is a
  positive contravariant $\cI$-equivalence in $\ssetI/N^c$. Since the
  induced map of colimits is a contravariant equivalence in
  $\sset/(X_{\cI}\times N^c_{\cI})$ by
  Corollaries~\ref{cor:Ktimes-preserves-contravariant-we}
  and~\ref{cor:colimI-constI-pos-Q-equiv}, another application of
  Corollary~\ref{cor:colimI-constI-pos-Q-equiv} shows that the induced
  map $X \boxtimes Y_1 \to X\boxtimes Y_2$ is a positive contravariant
  $\cI$-equivalence in $\ssetI/(X\boxtimes N^c)$. Composing with
  $X\boxtimes N^c \to N \boxtimes N \to N$ shows that $X \boxtimes Y_1
  \to X\boxtimes Y_2$ is a positive contravariant $\cI$-equivalence in
  $\ssetI/N$.
\end{proof}

\begin{corollary}\label{cor:contravariant-I-monoidal}
Let $N$ be a commutative monoid object in $\ssetI$. The positive contravariant $\cI$-model structure on $\ssetI/N$ satisfies the pushout product axiom and the monoid axiom as defined in~\cite{Schwede_S-algebras}.  
\end{corollary}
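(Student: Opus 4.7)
The plan is to deduce both axioms from Proposition~\ref{prop:X-boxtimes-preserves} together with the tractability and left properness of the positive contravariant $\cI$-model structure established in Proposition~\ref{prop:contravariant-I}. Two standing observations will be used throughout: every positive contravariant cofibrant object is absolute contravariant cofibrant, so that Proposition~\ref{prop:X-boxtimes-preserves} applies to positive cofibrant objects; and the isomorphism $F_{\bld{m}}(K) \boxtimes F_{\bld{m}'}(L) \iso F_{\bld{m}\sqcup\bld{m}'}(K \times L)$ for free $\cI$-diagrams, combined with the closure of $\cI_+$ under $\sqcup$, reduces statements about $\boxtimes$-products of free generators to statements inside $\sset/N(\bld{n})$ for various $\bld{n}$.

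For the pushout product axiom, the first step is to verify on generators that the pushout product of two positive cofibrations is a positive cofibration, which reduces to the corresponding statement in $\sset$ via the free diagram isomorphism. For the acyclic part, tractability reduces to a generating acyclic cofibration $i\colon A \to B$ and a generating cofibration $j\colon C \to D$ with $A$, $B$, $C$, $D$ all cofibrant. Proposition~\ref{prop:X-boxtimes-preserves} then makes $i \boxtimes C$ and $i \boxtimes D$ positive contravariant $\cI$-equivalences; the cofibration part of the pushout product axiom (applied to $i$ and $\emptyset \to C$, respectively to $\emptyset \to A$ and $j$) makes $i \boxtimes C$ and $A \boxtimes j$ positive cofibrations. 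Consequently $i \boxtimes C$ is an acyclic positive cofibration, so its pushout $A \boxtimes D \to P$ along $A \boxtimes j$ is one as well. Two-out-of-three applied to $A \boxtimes D \to P \to B \boxtimes D$ concludes that the pushout product map $P \to B \boxtimes D$ is a weak equivalence.

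For the monoid axiom, following~\cite{Schwede_S-algebras} it suffices to show that for every acyclic positive contravariant cofibration $f\colon A \to B$ and every object $Y \to N$, the map $f \boxtimes Y$ is again an acyclic positive cofibration; acyclic cofibrations are then closed under pushouts and transfinite compositions. The equivalence part follows by choosing a cofibrant replacement $Y^c \to Y$ and applying two-out-of-three to the naturality square: Proposition~\ref{prop:X-boxtimes-preserves} (with $A$, hence $B$, cofibrant by tractability) makes the horizontal maps $A \boxtimes (Y^c \to Y)$ and $B \boxtimes (Y^c \to Y)$ positive contravariant $\cI$-equivalences, while the already-established pushout product axiom makes the top map $f \boxtimes Y^c$ a weak equivalence. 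For the cofibration part, the colimit decomposition from the proof of Proposition~\ref{prop:X-boxtimes-preserves} shows that $F_{\bld{m}}(K \hookrightarrow L) \boxtimes Y$ for $\bld{m} \in \cI_+$ is a levelwise coproduct of copies of $K \hookrightarrow L$ at positive levels and an isomorphism at $\bld{0}$, compatible with the positive cellular structure, and the property propagates to arbitrary positive cofibrations by cellular induction. The main obstacle is expected to be this last cellular analysis, since the equivalence half of the monoid axiom is a clean cofibrant replacement argument while the cofibration half requires tracking how $\boxtimes Y$ transforms positive cellular presentations when $Y$ itself is not cofibrant.
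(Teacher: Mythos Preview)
Your treatment of the pushout product axiom is correct and matches the paper's argument (which the paper abbreviates by citing \cite[Proposition~8.4]{Sagave-S_diagram}).

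The monoid axiom argument, however, has a genuine gap. You reduce to showing that $f \boxtimes Y$ is an acyclic \emph{positive cofibration} for every generating acyclic cofibration $f$ and arbitrary $Y$, and you correctly establish the weak equivalence part via cofibrant replacement and Proposition~\ref{prop:X-boxtimes-preserves}. The problem is the cofibration part. Your levelwise analysis of $F_{\bld{m}}(K\hookrightarrow L)\boxtimes Y$ only shows that this map is a monomorphism in each level and an isomorphism at level~$\bld{0}$; that is, it is an \emph{injective level cofibration}. But the positive contravariant cofibrations form a projective-type class generated by the $F_{\bld{m}}(i)$ with $\bld{m}\in\cI_+$, and ``levelwise monomorphism which is an isomorphism at $\bld{0}$'' is strictly weaker than being a positive cofibration. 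For non-cofibrant $Y$ there is no reason for $F_{\bld{m}}(K\hookrightarrow L)\boxtimes Y$ to admit a cellular presentation by maps of the form $F_{\bld{m'}}(i')$, and your phrase ``compatible with the positive cellular structure'' does not supply one. Consequently the class of maps you produce is not known to be closed under cobase change in the way acyclic cofibrations automatically are.

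The paper's proof avoids this by working with the larger class of maps that are simultaneously positive contravariant $\cI$-equivalences \emph{and} injective level cofibrations. It then argues directly that this class is closed under cobase change and transfinite composition: closure under cobase change uses that the contravariant model structure on each $\sset/N(\bld{n})$ is left proper with cofibrations the monomorphisms, together with a cofibrant replacement in the absolute contravariant level model structure; closure under transfinite composition is immediate since both properties are. You could repair your argument along the same lines, replacing the unproven claim ``$f\boxtimes Y$ is a positive cofibration'' with ``$f\boxtimes Y$ is an injective level cofibration'' and then supplying the closure argument for that weaker class.
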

\begin{proof}
  The cofibration part of the pushout product axiom follows
  from~\cite[Proposition 8.4]{Sagave-S_diagram}. As explained
  in~\cite[Proposition 8.4]{Sagave-S_diagram},
  Proposition~\ref{prop:X-boxtimes-preserves} implies the statement
  about the generating acyclic cofibrations.

  For the monoid axiom, we have to show that transfinite composition
  of cobase changes of maps of the form $X\boxtimes (Y_1 \to Y_2)$
  with $Y_1\to Y_2$ an acyclic cofibration are contravariant
  $\cI$-equivalences.  Since $\ssetI/N$ is tractable, we may assume
  that also the generating acyclic cofibrations of the positive
  contravariant $\cI$-model structure have cofibrant domains and
  codomains~\cite[Corollary~2.8]{barwick_left-right}. Using
  Proposition~\ref{prop:X-boxtimes-preserves} and a cofibrant
  replacement of $X$, it follows that $X\boxtimes (Y_1 \to Y_2)$ is a
  contravariant $\cI$-equivalence. It is also an injective level
  cofibration, i.e., a cofibration when evaluated at any object
  $\bld{n}$ of $\cI$. Using a cofibrant replacement in the absolute
  contravariant level model structure, it follows that cobase changes
  and transfinite compositions preserve morphisms that are both
  contravariant $\cI$-equivalences and injective level cofibrations.
\end{proof}
The next proposition states that (any monoidal left Bousfield
localization of) the positive contravariant $\cI$-model structure on $
\ssetI/N$ lifts to operad algebras in the best possible way.
\begin{proposition}\label{prop:operad-lift}
Let $N$ be a commutative monoid object in $\ssetI$, let $\cM$ be
a left Bousfield localization of the positive contravariant $\cI$-model
structure on $ \ssetI/N$, and assume that $\cM$ satisfies the pushout
product axiom with respect to $\boxtimes$. 
\begin{enumerate}[(i)]
\item For any operad $\mathcal O$ in $\sset$, the forgetful functor
$\cM[\mathcal O] \to \cM$ from the category of $\cO$-algebras in $\cM$
creates a model structure on $\cM[\mathcal O]$. 
\item If $\cP \to \cO$ is a weak equivalence of operads,  
then the induced adjunction $\cM[\mathcal P]\rightleftarrows \cM[\mathcal O]$ 
is a Quillen equivalence.
\end{enumerate}
\end{proposition}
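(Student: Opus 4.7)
The plan is to verify the list of hypotheses required by a general operad admissibility and rectification theorem, such as the one developed by Pavlov--Scholbach~\cite{Pavlov-Scholbach-admissibility}. Concretely, I would show that $\cM$ is a combinatorial symmetric monoidal model category satisfying (a) the pushout product axiom, (b) the monoid axiom, and (c) a symmetric flatness condition controlling the interaction of $n$-fold tensor powers with the $\Sigma_n$-action. Once these are in place, (i) follows from the admissibility part of the cited machinery, and (ii) from its rectification part.

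\textbf{Step 1 (Standing properties).} Since $\cM$ is obtained from the positive contravariant $\cI$-model structure on $\ssetI/N$ by a monoidal left Bousfield localization, it is simplicial, combinatorial, tractable and left proper. The pushout product axiom is assumed. The monoid axiom for the un-localized positive contravariant $\cI$-model structure is Corollary~\ref{cor:contravariant-I-monoidal}; using tractability and Proposition~\ref{prop:X-boxtimes-preserves}, the same argument as in the proof of that corollary transfers the monoid axiom to any monoidal left Bousfield localization.

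\textbf{Step 2 (Symmetric flatness).} This is the crucial input. The generating (acyclic) cofibrations of $\cM$ can be taken of the form $F_{\bld{m}}(i)$ with $\bld{m}$ a positive object of $\cI$ and $i$ a monomorphism in $\sset/N(\bld{m})$ with cofibrant domain. The Day convolution formula yields an isomorphism
\[F_{\bld{m}}(K)^{\boxtimes n} \iso F_{\bld{m}^{\sqcup n}}(K^{\times n}),\]
and because $\bld{m}$ is positive, the symmetric group $\Sigma_n$ acts freely on $\bld{m}^{\sqcup n}$ by permutation of the $n$ summands. Consequently the iterated pushout product of such a generating cofibration is $\Sigma_n$-free on cells, so $(-)^{\boxtimes n}/\Sigma_n$ sends it to a cofibration in $\cM$. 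Combining this free $\Sigma_n$-action with Proposition~\ref{prop:X-boxtimes-preserves} shows that $(-)^{\boxtimes n}/\Sigma_n$ preserves $\cI$-equivalences between cofibrant objects, which is the symmetric flatness hypothesis. The argument is the direct contravariant analogue of the proof of~\cite[Theorem~3.1]{Kodjabachev-S_Joyal-I}, and also of the corresponding argument in~\cite{Sagave-S_diagram} for the Kan model structure.

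\textbf{Step 3 (Conclusion).} With the pushout product axiom, the monoid axiom, and symmetric flatness verified, part (i) is an application of the admissibility theorem of~\cite{Pavlov-Scholbach-admissibility}, which produces transferred model structures on $\cM[\cO]$ for arbitrary operads $\cO$ in $\sset$. Part (ii) is an application of the rectification theorem in the same reference, which promotes a weak equivalence $\cP \to \cO$ of operads in $\sset$ to a Quillen equivalence $\cM[\cP] \rightleftarrows \cM[\cO]$ under precisely these hypotheses.

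The principal obstacle is Step 2, the verification of symmetric flatness, and this is exactly where the \emph{positive} variant of the contravariant $\cI$-model structure is indispensable. In the absolute variant one would have to control $F_{\bld{0}}(K)^{\boxtimes n} \iso F_{\bld{0}}(K^{\times n})$, on which $\Sigma_n$ acts only through $K^{\times n}$ and hence not freely in general; restricting cells to positive $\bld{m}$ replaces this problematic action by the free permutation of $\bld{m}^{\sqcup n}$ and thereby makes the entire operadic lifting machinery applicable.
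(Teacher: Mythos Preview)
Your strategy is the paper's: invoke the Pavlov--Scholbach criteria and verify them via the free $\Sigma_n$-action afforded by positivity together with Proposition~\ref{prop:X-boxtimes-preserves}. Two points in your sketch need sharpening. First, the relevant Pavlov--Scholbach theorems require \emph{symmetric $h$-monoidality} (in the sense of~\cite[Definition~4.2.4]{Pavlov-Scholbach-symmetric-powers}), which is strictly stronger than the monoid axiom; the paper accordingly begins by showing that levelwise cofibrations are $h$-cofibrations (via left properness of the contravariant model structure) and then checks the symmetric $h$-monoidality condition on generating cofibrations and generating acyclic cofibrations separately. Second, your claim that the generating \emph{acyclic} cofibrations of the localization $\cM$ are still of the form $F_{\bld m}(i)$ is not correct: a left Bousfield localization introduces new acyclic cofibrations with no such explicit description. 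The paper handles this by using tractability to assume cofibrant domains and codomains, replacing arbitrary objects by projectively $\Sigma_{(n_i)}$-cofibrant ones, and then exploiting that the $\Sigma_{(n_i)}$-action on $(Y\boxtimes W)(\bld m)$ is free in positive levels (ultimately from~\cite[Lemma~2.9]{Kodjabachev-S_Joyal-I}) so that passing to orbits preserves the relevant weak equivalences. Your Step~2 isolates the right mechanism, but both symmetric $h$-monoidality and symmetric flatness in the localized setting require this more careful cofibrant-replacement-plus-freeness argument rather than a direct appeal to the shape of generators.
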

\begin{proof}
The criteria given in~\cite[Theorems 5.10 and
7.5]{Pavlov-Scholbach-admissibility} reduce this to showing that $\cM$ is \emph{symmetric $h$-monoidal} and \emph{symmetric flat} in the sense of~\cite[Definitions 4.2.4 and 4.2.7]{Pavlov-Scholbach-symmetric-powers}. 

As a first step, we show that the levelwise cofibrations in $\cM$ are $h$-cofibrations in the sense of~\cite[Definition 2.0.4]{Pavlov-Scholbach-symmetric-powers}, i.e., that cobase change along levelwise cofibrations preserves weak equivalences. For this it is sufficient that pushouts along levelwise cofibrations are are homotopy pushouts in $\cM$. Let $V \ot U \to X$ be a diagram in $\cM$ with $U\to V$ a levelwise cofibration. Let $U \to V'\to V$ be a factorization of $U \to V$ into a positive $\cI$-cofibration $U \to V'$ and a positive level equivalence $V' \to V$. Then the induced map of pushouts $V'\coprod_{U}X \to V\coprod_{U}X$ is a positive level equivalence by a levelwise application of the left properness of the contravariant model structure. Hence $V\coprod_{U}X $ is a homotopy pushout.

By~\cite[Theorem 4.3.9(iii)]{Pavlov-Scholbach-symmetric-powers}, it is
sufficient to verify symmetric $h$-monoidality on the generating
(acyclic) cofibrations. For this we let
\begin{equation}\label{eq:fam-gen-cof}
v_i = F_{\bld{k_i}}^{\cI}(\partial \Delta^{m_i} \to \Delta^{m_i}), \quad 1\leq i\leq e
\end{equation}
be a family of generating cofibrations of $\cM$. (We drop the augmentation to $N$ from the notation.) Let $(n_i)_{1\leq i\leq e}$ be a family of natural numbers. Then the iterated pushout product map 
\begin{equation}\label{eq:iterated-pp}
v = v_1^{\Box n_1} \Box \dots \Box v_{e}^{\Box n_e}
\end{equation}
is a $\Sigma_{(n_i)} = \Sigma_{n_1} \times \dots \times \Sigma_{n_e}$-equivariant map. For every $\Sigma_{(n_i)}$-object $Y$ in $\cM$, there is an isomorphism
\[
Y \boxtimes v \iso (Y \boxtimes F_{\bld{k}}^{\cI}(*)) \times \iota
\]
where $\bld{k} = \bld{k_1}^{\sqcup n_1}\sqcup \dots \sqcup \bld{k_e}^{\sqcup n_e}$ and 
\[
\iota = (\partial \Delta^{m_1} \to \Delta^{m_1})^{\Box n_1}\Box \dots \Box (\partial \Delta^{m_e} \to \Delta^{m_e})^{\Box n_e}
\]
is the iterated pushout product map in spaces. Hence $Y\Box v$ is a levelwise cofibration of simplicial sets, and so is its quotient by the $\Sigma_{(n_i)}$-action. This verifies the cofibration part of the symmetric $h$-monoidality. 

Next let $(v_i\colon V_i \to W_i)_{1\leq i\leq e}$ be a family of
generating acyclic cofibrations for $\cM$. We may assume that the
$V_i$ and $W_i$ are positive cofibrant since $\sset^{\cI}/N$ and hence
$\cM$ is tractable. Let $v\colon V \to W$ be defined as
in~\eqref{eq:iterated-pp} and let $Y$ be a $\Sigma_{(n_i)}$-object in
$\cM$. For the acyclic cofibration part of the symmetric
$h$-monoidality, we have to show that $(Y \boxtimes
v)_{\Sigma_{(n_i)}}$ is a weak equivalence in $\cM$. Let $f \colon X
\to Y$ be a cofibrant replacement in $\cM$ and consider the diagram
\[
\xymatrix@-1pc{X\boxtimes V \ar[rr]^{f \boxtimes V} \ar[d]_{X\boxtimes v} && Y\boxtimes V \ar[d]^{Y\boxtimes v} && (Y\boxtimes V)^{\mathrm{cof}} \ar[d]^{g} \ar@{->>}[ll]^{\sim}_{p_V}\\
X\boxtimes W \ar[rr]^{f \boxtimes W} && Y \boxtimes W&&  (Y\boxtimes W)^{\mathrm{cof}}\ar@{->>}[ll]^{\sim}_{p_W}}
\]
where $g$ is a replacement of $Y\boxtimes v$ by a map of cofibrant
objects in the projective model structure on $\cM^{\Sigma_{(n_i)}}$.
The map $X\boxtimes v$ is a weak equivalence in $\cM$ by the pushout
product axiom in $\cM$, and the maps $f\boxtimes V$ and $f\boxtimes W$
are positive $\cI$-equivalences by
Proposition~\ref{prop:X-boxtimes-preserves}.  Hence $Y \boxtimes v$
and $g$ are weak equivalences in $\cM$. To see that $Y \boxtimes v$
becomes a weak equivalence after taking $\Sigma_{(n_i)}$-orbits, we
first note that $g$ induces a weak equivalence of
$\Sigma_{(n_i)}$-orbits because it is a map of cofibrant
objects. Hence it is sufficient to show that $p_V$ and $p_W$ induce a
weak equivalence of $\Sigma_{(n_i)}$-orbits. Since these are actually
positive contravariant level equivalences, it is sufficient to show
that the $\Sigma_{(n_i)} $-action on ${Y\boxtimes W}$ is free in
positive levels. The group  $\Sigma_{n_i}$-acts freely on $W_i^{\boxtimes n_i}(\bld{m})$ because  $W_i$ is positive cofibrant~\cite[Lemma~2.9]{Kodjabachev-S_Joyal-I}. The fact that there is a morphism  of
$\Sigma_{(n_i)}$-spaces
\[
(Y\boxtimes W)(\bld{m}) \to W(\bld{m}) \to (W_1^{\boxtimes n_1}\boxtimes \dots \boxtimes W_e^{\boxtimes n_e})(\bld{m}) \to W_1^{\boxtimes n_1}(\bld{m})\times \dots \times W_e^{\boxtimes n_e}(\bld{m})
\]
thus implies that  $\Sigma_{(n_i)} $ act freely on ${Y\boxtimes W}(\bld{m})$. This completes the acyclic cofibration part of the  symmetric $h$-monoidality. 

For symmetric flatness, it is by~\cite[Theorem 4.3.9(ii)]{Pavlov-Scholbach-symmetric-powers} sufficient to show that for a weak equivalence $y \colon Y \to Z$ in the projective model structure on $\cM^{\Sigma_{(n_i)}}$ and for $v$ as in~\eqref{eq:fam-gen-cof} and~\eqref{eq:iterated-pp}, the map $(y \Box v)_{\Sigma_{(n_i)}}$ is a weak equivalence in $\cM$. Here $y \Box v$ is the pushout product map in the square
\[
\xymatrix@-1pc{Y\boxtimes V \ar[rr]^{y \boxtimes V} \ar[d]_{Y\boxtimes v} && Z\boxtimes V \ar[d]^{Z\boxtimes v} \\
Y\boxtimes W \ar[rr]^{y \boxtimes W} && Z \boxtimes W.}  
\]
Replacing $y$ by a weak equivalence of cofibrant objects in $\cM^{\Sigma_{(n_i)}}$ and using Proposition~\ref{prop:X-boxtimes-preserves} and the pushout product axiom in $\cM$ shows that the vertical maps are weak equivalences in $\cM$. Since $X \boxtimes v$ is a levelwise cofibration by~\cite[Proposition 7.1(vi)]{Sagave-S_diagram}, it is an $h$-cofibration by the argument at the beginning of the proof. Hence $y\Box v$ is a weak equivalence in $\cM$ by 2-out-of-3. Arguing as in the previous step of the proof, the fact that  $\Sigma_{(n_i)}$ acts freely on the positive levels of $Y \boxtimes W$ implies that  $(y\Box v)_{\Sigma_{(n_i)}}$ is a weak equivalence in $\cM$. 
\end{proof}

\begin{remark}
The argument given in the previous proof actually shows the stronger statement that the two assertions in the proposition hold for colored operads and for operads internal to $\cC$. 
\end{remark}

%%%%%%%%%%%%%%%%%%%%%%%%%%%%%%%%%%%%%%%%%%%%%%%%%%%%%%%%%%%%%%%%%%%%%%%%%%%%%%%%%%%%%%%%%
\section{\texorpdfstring{$E_{\infty}$}{E-infinity} objects and
  symmetric monoidal \texorpdfstring{$\infty$}{infinity}-categories}\label{sec:E-infty}
%%%%%%%%%%%%%%%%%%%%%%%%%%%%%%%%%%%%%%%%%%%%%%%%%%%%%%%%%%%%%%%%%%%%%%%%%%%%%%
The goal of this section is to prove Proposition~\ref{prop:monoidal-equivalence} and its functorial refinement Proposition~\ref{prop_symmon}.

The $\infty$-category $\SMC$ of small symmetric monoidal
$\infty$-categories is equivalent to the $\infty$-category
$\mathrm{CAlg}(\mathrm{Cat_{\infty}})$ of commutative algebra objects
in $\infty$-categories~\cite[Remark 2.4.2.6]{Lurie_HA}. Now let $\cE$
be an $E_{\infty}$ operad in $\sset_{\mathrm{Joyal}}$ in the above
sense (for example, the Barratt--Eccles operad). We will use the following
result about the rectification of commutative algebras in the
$\infty$-categorical sense to operad algebras in the model category.
\begin{proposition}\label{prop:SMC-as-localization}
There is an equivalence of $\infty$-categories
\begin{equation}\label{eq:SMC-as-localization}
\left(\sset_{\mathrm{Joyal}}[\cE]\right)_{\infty} \simeq \mathrm{CAlg}(\mathrm{Cat_{\infty}})
\end{equation}
relating the $\infty$-category associated with the model category of $\cE$-algebras in $\sset_{\mathrm{Joyal}} $ and $\mathrm{CAlg}(\mathrm{Cat_{\infty}})$. 
For an object $M$ in $\sset_{\mathrm{Joyal}}[\cE] $, the $\infty$-category represented by~$M$ is naturally equivalent to the underlying $\infty$-category of the associated commutative algebra in~$\mathrm{Cat_{\infty}}$. 
\end{proposition}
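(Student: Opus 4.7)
The plan is to apply a general rectification theorem for operad algebras in symmetric monoidal model categories to $\sset$ equipped with the Joyal model structure and the cartesian monoidal product.

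First I would check that $\sset_{\mathrm{Joyal}}$ with cartesian product is a combinatorial, tractable, left proper symmetric monoidal model category. The pushout-product axiom for cartesian products of Joyal cofibrations is standard (see the discussion around~\cite[\S 2.2.5]{Lurie_HTT}), and the remaining properties are immediate. This ensures that for any $E_{\infty}$-operad $\cE$ in $\sset_{\mathrm{Joyal}}$ the lifted model structure on $\cE$-algebras exists, so that the left-hand side of~\eqref{eq:SMC-as-localization} makes sense.

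Next I would identify the underlying symmetric monoidal $\infty$-category of $\sset_{\mathrm{Joyal}}$ as $\mathrm{Cat_{\infty}}$ with its cartesian symmetric monoidal structure. That the underlying $\infty$-category is $\mathrm{Cat_{\infty}}$ is Joyal's theorem that quasi-categories model $\infty$-categories, and the identification of the monoidal structure follows from the general fact that the underlying symmetric monoidal $\infty$-category of a cartesian model category is cartesian.

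With these preliminaries, the equivalence~\eqref{eq:SMC-as-localization} follows by applying Pavlov--Scholbach's rectification theorem~\cite[Theorem~7.10]{Pavlov-Scholbach-admissibility}: under suitable flatness hypotheses on a symmetric monoidal model category $\cN$, one has $(\cN[\cE])_{\infty} \simeq \mathrm{CAlg}(\cN_{\infty})$ for any $E_{\infty}$-operad $\cE$. Combined with the identification $\mathrm{CAlg}(\mathrm{Cat_{\infty}}) \simeq \SMC$ of~\cite[Remark~2.4.2.6]{Lurie_HA}, this yields the desired equivalence. The naturality statement asserting that the $\infty$-category represented by $M$ coincides with the underlying $\infty$-category of the corresponding commutative algebra follows from the compatibility of the rectification with the forgetful functor to the base.

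I expect the main obstacle to be the verification of the symmetric flatness and related hypotheses of the rectification theorem for $\sset_{\mathrm{Joyal}}$ itself. While a parallel verification is carried out in the proof of Proposition~\ref{prop:operad-lift} for the contravariant $\cI$-model structure, the case of the Joyal model structure on $\sset$ is a distinct technical calculation controlling the $\Sigma_{n}$-equivariant homotopy type of cartesian powers of Joyal (acyclic) cofibrations.
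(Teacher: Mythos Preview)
Your outline matches the paper's in spirit: both invoke Pavlov--Scholbach's rectification theorem~\cite[Theorem~7.10]{Pavlov-Scholbach-admissibility}. But the obstacle you flag is not quite the one the paper identifies, and the way you propose to resolve it is unlikely to succeed.

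The paper's point is that~\cite[Theorem~7.10]{Pavlov-Scholbach-admissibility} is stated for \emph{simplicial} model categories and \emph{simplicial} operads, i.e., operads enriched in $\sset_{\mathrm{Kan}}$, whereas here $\cE$ is an operad in $\sset_{\mathrm{Joyal}}$. As noted in~\cite[Remark~7.12]{Pavlov-Scholbach-admissibility}, in this setting the proof strategy still goes through provided one can separately identify the free $\cE$-algebra $\cE(X)$ on a cofibrant $X$ with the free commutative algebra in $\mathrm{Cat}_{\infty}$. The paper does \emph{not} do this by verifying symmetric flatness for $\sset_{\mathrm{Joyal}}$ directly; instead it passes through the Quillen equivalences of Theorem~\ref{thm:Joyal-I-com-E-infty-equiv} to replace $\cE(X)$ by the free commutative monoid in $\ssetI$ on a positive cofibrant replacement of $\const_{\cI}(X)$, and then appeals to~\cite[Lemma~2.9]{Kodjabachev-S_Joyal-I} (freeness of the $\Sigma_n$-action on $\boxtimes$-powers in positive levels) to run the argument from part~(e) of~\cite[Theorem~4.5.3.7]{Lurie_HA}.

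Your proposal to instead verify symmetric flatness for $\sset_{\mathrm{Joyal}}$ head-on is problematic: the relevant $\Sigma_n$-action on an $n$-fold \emph{cartesian} power $X^{\times n}$ is never free (the diagonal is fixed), so the freeness argument used in Proposition~\ref{prop:operad-lift} does not transplant to $\sset_{\mathrm{Joyal}}$. The whole purpose of the detour through $\ssetI$ with the positive model structure is to manufacture a setting where that freeness holds. So while your high-level plan is correct, the missing idea is precisely this passage through $\ssetI[\cC]$ to handle the free-algebra comparison.
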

\begin{proof}
  This is essentially a consequence
  of~\cite[Theorem~7.10]{Pavlov-Scholbach-admissibility} (which is in
  turn based on~\cite[Theorem 4.5.3.7]{Lurie_HA}). However,
  \cite[Theorem~7.10]{Pavlov-Scholbach-admissibility} is not directly
  applicable since it is formulated in terms of simplicial model
  categories and simplicial operads, while $\cE$ is an operad in
  $\sset_{\mathrm{Joyal}}$. As explained
  in~\cite[Remark~7.12]{Pavlov-Scholbach-admissibility}, this context
  requires a different argument for identifying the free $\cE$-algebra
  $\cE(X)$ on a cofibrant object $X$ with its derived counterpart in
  $\mathrm{CAlg}(\mathrm{Cat_{\infty}})$. To circumvent this problem, we note that under
  the chain of Quillen equivalences in
  Theorem~\ref{thm:Joyal-I-com-E-infty-equiv}, $\cE(X)$ corresponds to
  the free commutative algebra on a positive cofibrant replacement of
  $\const_{\cI}(X)$. Using~\cite[Lemma~2.9]{Kodjabachev-S_Joyal-I} in
  place of~\cite[Lemma 4.5.4.11(3)]{Lurie_HA}, the claim about
  $\cE(X)$ follows as in part (e) of the proof of~\cite[Theorem
  4.5.3.7]{Lurie_HA}.
\end{proof}
We are now ready to give the proof of the key proposition from Section~\ref{sec_two}:
\begin{proof}[Proof of Proposition~\ref{prop:monoidal-equivalence}]
  Using the above discussion, we choose an $\cE$-algebra $M$ in
  $\sset$ representing the given small symmetric monoidal
  $\infty$-category $\cD$ and consider the model category
  $\ssetI/M^{\mathrm{rig}} $ arising from
  Theorem~\ref{thm_covariant}. By
  Proposition~\ref{prop:contravariant-I} and
  Corollary~\ref{cor:contravariant-I-monoidal}, this is a simplicial,
  combinatorial, tractable and left proper symmetric monoidal model
  category.
  Let $\cC = \left(\ssetI/M^{\mathrm{rig}}\right)_{\infty}$ be the
  presentably symmetric monoidal $\infty$-category associated with
  $\ssetI/M^{\mathrm{rig}} $. We will show that $\cC$ and
  $\cP(\cD)$ are equivalent as symmetric monoidal $\infty$-categories.

It is immediate from Theorem \ref{thm_covariant} that after forgetting the monoidal structure, $\cC$ is equivalent to the underlying $\infty$-category of the contravariant model structure on $\sset/M$. 
The underlying $\infty$-category of $\sset/M$ is equivalent to the $\infty$-category $\cP(\cD)$ by means of the $\infty$-categorical Grothendieck construction \cite[Theorem 2.2.1.2]{Lurie_HTT} and the fact that the underlying $\infty$-category of $M$ is equivalent to the underlying
$\infty$-category of~$\cD$. Note that all the involved equivalences, i.e., the equivalences coming from Theorem \ref{thm_covariant} as well as the Grothendieck construction, are pseudonatural in $M$, that is, natural in a 2-categorical sense. 
Thus invoking \cite[Appendix A]{HGN} we conclude that the induced equivalence of $\infty$-categories
\begin{equation*}
\Phi\colon \cP(\cD) \to \cC 
\end{equation*}
is natural in $\cD$ in the $\infty$-categorical sense. Note however that this equivalence does not necessarily need to respect the symmetric monoidal structures. 

We need to show that $\Phi$ is compatible with the symmetric monoidal
structures on $\cP(\cD)$ and $\cC$. By the universal property of the
Day convolution symmetric monoidal structure on $\cD$ reviewed in
Section \ref{sec_two}, it suffices to equip the functor
\begin{equation*}
\Psi = \Phi \circ j\colon \cD \to \cC 
\end{equation*}
given by composition with the Yoneda embedding $j: \cD \to \cP(\cD)$
with a symmetric monoidal structure. The functor $\Psi$ is also
natural in $\cD$ in the $\infty$-categorical sense.  We denote the
essential image of $\Psi$ by $\Psi(\cD)$. By construction $\Psi(\cD)$
is a full subcategory of~$\cC$. It follows from Lemma \ref{lemma_rep}
that $\Psi(\cD)$ is closed under tensor products in $\cC$.  Thus it
inherits a symmetric monoidal structure from $\cC$ such that the
inclusion functor $\Psi(\cD) \to \cC$ is a symmetric monoidal functor.

To complete the proof, it is sufficient to show that the corestriction $\cD \to \Psi(\cD)$ of $\Psi$ is a symmetric monoidal functor. For this we use the equivalence~\eqref{eq:SMC-as-localization} and the functoriality of the involved constructions to view the construction $\cD \mapsto \Psi(\cD)$ as a functor 
\[
G\colon \SMC\to \SMC
\]
This functor $G$ comes with a natural equivalence $UG \simeq U$ given by $\Psi$ 
where $U\colon \SMC \to \Cat_\infty$ is the canonical forgetful functor. The next lemma implies that $G$ is canonically equivalent to the identity functor on $\SMC$ and that the equivalence refines $\Psi$. 
We conclude that for each~$\cD$, the functor $\Psi$ refines to an equivalence $\cD \simeq \Psi(\cD)$ of symmetric monoidal $\infty$-categories. 
\end{proof}

\begin{lemma}\label{lastlemmahere}
Let $G\colon \SMC \to \SMC$ be a functor together with an equivalence $UG \simeq U$. Then the equivalence admits a canonical refinement to an equivalence $G \simeq \id$.
\end{lemma}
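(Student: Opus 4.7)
The plan is to use the equivalence $\SMC \simeq \mathrm{CAlg}(\Cat_\infty^{\times})$ of~\cite[Remark~2.4.2.6]{Lurie_HA}, under which an object $\cD \in \SMC$ corresponds to a functor $\cD^\otimes\colon N(\mathcal{F}\mathrm{in}_*)\to \Cat_\infty$ satisfying the Segal condition, and $U$ corresponds to evaluation at $\langle 1\rangle$. The task then becomes to promote the natural equivalence $\xi\colon UG\simeq U$ of evaluations at $\langle 1\rangle$ to a natural equivalence of the full Segal diagrams $G(\cD)^\otimes \simeq \cD^\otimes$.

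The first step is essentially automatic: the Segal condition gives, for every $n$, canonical equivalences $\cD^\otimes(\langle n\rangle)\simeq \cD^\otimes(\langle 1\rangle)^n$ and $G(\cD)^\otimes(\langle n\rangle)\simeq G(\cD)^\otimes(\langle 1\rangle)^n$ indexed by the inert maps $\langle n\rangle \to \langle 1\rangle$. Combined with the $n$-fold Cartesian power of $\xi_\cD$, this produces a natural equivalence $\xi^{(n)}_\cD\colon G(\cD)^\otimes(\langle n\rangle)\simeq \cD^\otimes(\langle n\rangle)$ that is by construction compatible with all inert morphisms of $N(\mathcal{F}\mathrm{in}_*)$.

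The crucial step is to check compatibility of $\xi^{(n)}$ with the active morphisms, in particular the multiplication $\langle 2\rangle \to \langle 1\rangle$ that encodes the tensor of $\cD$. The plan here is to exploit the functoriality of $G$ on symmetric monoidal functors: for any map $f\colon \cE \to \cD$ in $\SMC$, $G(f)\colon G(\cE)\to G(\cD)$ is again symmetric monoidal, and its image under $U$ is identified with $U(f)$ via the naturality square of $\xi$. Testing this against universal objects, namely the free symmetric monoidal $\infty$-categories $F(\Delta^0)$ and $F(\Delta^0\sqcup\Delta^0)$ whose symmetric monoidal functors into $\cD$ classify objects and pairs of objects of $U\cD$ respectively, should force the action of the multiplication on $G(\cD)^\otimes$ to coincide with that on $\cD^\otimes$ under the identifications from the previous step, and similarly for all other active morphisms after taking smash powers.

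The principal obstacle is coherence: each of the above verifications has to be assembled into a single equivalence of functors $G(-)^\otimes \simeq (-)^\otimes\colon \SMC \to \Fun(N(\mathcal{F}\mathrm{in}_*),\Cat_\infty)$, not merely a family of equivalences indexed pointwise by $\cD$. The cleanest way to finish is to phrase the construction directly at this level of functor categories, so that the output is a natural transformation in $\Fun(\SMC,\SMC)$ whose value at each $\cD$ is a Segal-compatible equivalence; under the equivalence $\SMC\simeq\mathrm{CAlg}(\Cat_\infty^{\times})$ this descends to the desired equivalence $G\simeq \id$, and by construction its post-composition with $U$ recovers the original $\xi$.
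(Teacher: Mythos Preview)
Your proposal has a genuine gap at the ``crucial step.'' You want to deduce compatibility of $\xi^{(n)}$ with the active map $\langle 2\rangle \to \langle 1\rangle$ by probing with symmetric monoidal functors out of $\Fr(\Delta^0)$ and $\Fr(\Delta^0\sqcup\Delta^0)$. But applying $G$ to such a functor $f\colon \Fr(\Delta^0\sqcup\Delta^0)\to\cD$ produces $G(f)\colon G(\Fr(\Delta^0\sqcup\Delta^0))\to G(\cD)$, whose \emph{domain} is $G(\Fr(\Delta^0\sqcup\Delta^0))$, not $\Fr(\Delta^0\sqcup\Delta^0)$. To extract information about the tensor on $G(\cD)$ from this you would first need to identify $G(\Fr(\Delta^0\sqcup\Delta^0))$ with $\Fr(\Delta^0\sqcup\Delta^0)$ \emph{as a symmetric monoidal $\infty$-category}, and that is exactly an instance of the statement you are proving. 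The hypothesis $UG\simeq U$ controls only underlying categories and underlying functors; naturality of $\xi$ alone does not pin down the monoidal structure on $G(\cD)$. The coherence obstacle you flag at the end is therefore not just bookkeeping: it is a symptom of this missing identification, and your final paragraph does not indicate how to supply it.

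The paper avoids this circularity by a completely different mechanism. Since $U$ creates limits and filtered colimits, $UG\simeq U$ forces $G$ to preserve them, and the adjoint functor theorem gives $G$ a left adjoint $F$. Passing to left adjoints, the equivalence $UG\simeq U$ becomes $F\circ\Fr\simeq\Fr$, where $\Fr\colon\Cat_\infty\to\SMC$ is left adjoint to $U$. One then invokes \cite[Theorem~4.6]{GGN}: $\Fr$ exhibits $\SMC$ as the free presentable pre-additive $\infty$-category on $\Cat_\infty$, so a colimit-preserving endofunctor of $\SMC$ is determined by its composite with $\Fr$. Hence $F\simeq\id$, and therefore $G\simeq\id$, with the equivalence canonically refining $\xi$. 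No direct comparison of monoidal structures is ever needed; the universal property absorbs all of the coherence.
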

\begin{proof}
We first observe that $G$ preserves limits and filtered colimits, since these are generated by the functor $U$. Together with the fact that $\SMC$ is presentable and the adjoint functor theorem, this shows that $G$ is right adjoint. 
Denote the left adjoint of $G$ by $F$. The equivalence $UG \simeq U$ implies that the diagram
\[
\xymatrix@-1pc{
&\Cat_\infty \ar[rd]^{\Fr}\ar[ld]_{\Fr} & \\ 
\SMC \ar[rr]^F & & \SMC
}
\]
commutes, where $\Fr$ is the free symmetric monoidal category functor. Now we use that the functor $\Fr$ exhibits $\SMC$ as the free prsesentable, pre-additive category on $\mathrm{Cat}_\infty$~\cite[Theorem 4.6.]{GGN}. 
Since $F$ is left adjoint this implies that it has to be canonically equivalent to the identity. Thus also the right adjoint $G$ is canonically equivalent to the identity.
\end{proof}
The proof of Proposition~\ref{prop:monoidal-equivalence} in fact provides the following stronger statement:
\begin{proposition}\label{prop_symmon}
For every symmetric monoidal functor $f\colon \cD \to \cD'$ between small $\infty$-categories there exists a symmetric monoidal, left Quillen functor between model categories $F\colon \cM \to \cM'$ such that 
$f_!\colon \cP(\cD) \to \cP(\cD')$ is symmetric monoidally equivalent to the underlying functor of $F$.\qed
\end{proposition}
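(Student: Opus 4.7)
The plan is to apply every step of the proof of Proposition \ref{prop:monoidal-equivalence} to the morphism $f$ rather than just to an object. First, I would use Proposition \ref{prop:SMC-as-localization} to rectify $f$ to a morphism $\phi\colon M \to M'$ of $\cE$-algebras in $\sset_{\mathrm{Joyal}}$. The functoriality of the rigidification construction in Corollary \ref{cor:Einfty-rigidification} then yields a morphism $\phi^{\mathrm{rig}}\colon M^{\mathrm{rig}} \to (M')^{\mathrm{rig}}$ of commutative monoids in $(\ssetI,\boxtimes)$. Set $\cM = \ssetI/M^{\mathrm{rig}}$ and $\cM' = \ssetI/(M')^{\mathrm{rig}}$ with their positive contravariant $\cI$-model structures, and take $F\colon \cM \to \cM'$ to be postcomposition with $\phi^{\mathrm{rig}}$.

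I would then verify that $F$ is a simplicial, strong symmetric monoidal, left Quillen functor. Its right adjoint is pullback along $\phi^{\mathrm{rig}}$, and the argument of Lemma \ref{lem:hty-invariance-in-level-equiv} applied with $Z = M^{\mathrm{rig}}$ and $Z' = (M')^{\mathrm{rig}}$ shows that this is a Quillen adjunction with respect to the positive contravariant $\cI$-model structures. The fact that $\phi^{\mathrm{rig}}$ is multiplicative provides a natural isomorphism $F(X)\boxtimes F(Y)\iso F(X\boxtimes Y)$ over $(M')^{\mathrm{rig}}$ that makes $F$ strong symmetric monoidal with respect to the structures from Corollary \ref{cor:contravariant-I-monoidal}.

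It remains to identify the underlying $\infty$-functor $F_\infty$ with $f_!\colon \cP(\cD)\to\cP(\cD')$ as a symmetric monoidal functor. Since every step in Theorem \ref{thm_covariant} is natural in $M$, and so is the $\infty$-categorical Grothendieck construction \cite[Theorem 2.2.1.2]{Lurie_HTT}, $F_\infty$ and $f_!$ are equivalent after forgetting the symmetric monoidal structure. To upgrade this to a symmetric monoidal equivalence I would invoke the universal property of the Day convolution: both $F_\infty\circ\Phi$ and $\Phi'\circ f_!$ are symmetric monoidal left adjoints $\cP(\cD)\to\cC'$, and they agree as such provided their restrictions along the Yoneda embedding $j\colon \cD\to\cP(\cD)$ agree symmetric monoidally, that is $F_\infty\circ\Psi\simeq \Psi'\circ f$ where $\Psi$ and $\Psi'$ are the symmetric monoidal inclusions established in the proof of Proposition \ref{prop:monoidal-equivalence}. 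The main obstacle is precisely this last symmetric monoidal compatibility; the equivalence of underlying $\infty$-functors is supplied by pseudonaturality of the whole chain in $M$ (imported to the $\infty$-categorical level via \cite[Appendix A]{HGN} as in the earlier proof), and I would upgrade it by adapting the argument of Lemma \ref{lastlemmahere} to the arrow $\infty$-category $\Fun(\Delta^1,\SMC)$, using that $\Fr$ still exhibits this as a free presentably preadditive $\infty$-category over $\Fun(\Delta^1,\Cat_\infty)$ so that the same adjoint functor theorem argument forces the relevant functor to be canonically equivalent to the identity.
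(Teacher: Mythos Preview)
Your proposal follows the same route as the paper: represent $f$ by a map of $\cE$-algebras via Proposition~\ref{prop:SMC-as-localization}, pass to the induced functor between the model categories $\ssetI/M^{\mathrm{rig}}$ and $\ssetI/(M')^{\mathrm{rig}}$, and then argue that the identification with $f_!$ is inherited from the proof of Proposition~\ref{prop:monoidal-equivalence}. The paper's own proof is literally this, in two sentences.

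The only divergence is in your final step, where you propose adapting Lemma~\ref{lastlemmahere} to the arrow $\infty$-category $\Fun(\Delta^1,\SMC)$. This detour is unnecessary: Lemma~\ref{lastlemmahere} as stated already produces an equivalence $G\simeq\id$ of \emph{endofunctors} of $\SMC$, not merely an objectwise one. The naturality square for this equivalence at the morphism $f\colon\cD\to\cD'$ is precisely the symmetric monoidal compatibility $\Psi'\circ f\simeq G(f)\circ\Psi$ you are after, where $G(f)$ is by construction $F_\infty$ restricted to representables. So the functorial statement falls directly out of the existing proof of Proposition~\ref{prop:monoidal-equivalence} with no further argument, which is exactly what the paper is asserting when it says that proof ``shows that this models the $\infty$-functor $f_!$''. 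Your arrow-category variant would also work (the relevant universal property is stable under passing to functor categories), but it is not needed.
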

\begin{proof}
We use Proposition \ref{prop:SMC-as-localization} to represent $f$ by a map of $\cE$-algebras. Then we get the induced functor between model categories and our proof of Proposition~\ref{prop:monoidal-equivalence} shows that this  models the $\infty$-functor $f_!$. 
\end{proof}

%\bibliography{pres-sym-mon}

% \bib, bibdiv, biblist are defined by the amsrefs package.
\begin{bibdiv}
\begin{biblist}

\bib{barwick_left-right}{article}{
      author={Barwick, Clark},
       title={On left and right model categories and left and right {B}ousfield
  localizations},
        date={2010},
        ISSN={1532-0073},
     journal={Homology Homotopy Appl.},
      volume={12},
      number={2},
       pages={245\ndash 320},
         url={http://projecteuclid.org/euclid.hha/1296223884},
}

\bib{Dugger_combinatorial}{article}{
      author={Dugger, Daniel},
       title={Combinatorial model categories have presentations},
        date={2001},
        ISSN={0001-8708},
     journal={Adv. Math.},
      volume={164},
      number={1},
       pages={177\ndash 201},
         url={http://dx.doi.org/10.1006/aima.2001.2015},
}

\bib{Dugger_replacing}{article}{
      author={Dugger, Daniel},
       title={Replacing model categories with simplicial ones},
        date={2001},
        ISSN={0002-9947},
     journal={Trans. Amer. Math. Soc.},
      volume={353},
      number={12},
       pages={5003\ndash 5027 (electronic)},
         url={http://dx.doi.org/10.1090/S0002-9947-01-02661-7},
}

\bib{GGN}{article}{
      author={Gepner, David},
      author={Groth, Moritz},
      author={Nikolaus, Thomas},
       title={Universality of multiplicative infinite loop space machines},
        date={2015},
        ISSN={1472-2747},
     journal={Algebr. Geom. Topol.},
      volume={15},
      number={6},
       pages={3107\ndash 3153},
         url={http://dx.doi.org/10.2140/agt.2015.15.3107},
}

\bib{HGN}{misc}{
      author={Haugseng, R.},
      author={Gepner, D.},
      author={Nikolaus, T.},
       title={Lax colimits and free fibrations in $\infty$-categories},
        date={2015},
        note={\arxivlink{1501.02161}},
}

\bib{Joyal-quasi-categories}{misc}{
      author={Joyal, A.},
       title={The theory of quasi-categories and its applications},
        date={2008},
        note={Available at
  \url{http://mat.uab.cat/~kock/crm/hocat/advanced-course/Quadern45-2.pdf}},
}

\bib{Kodjabachev-S_Joyal-I}{article}{
      author={Kodjabachev, Dimitar},
      author={Sagave, Steffen},
       title={Strictly commutative models for {$E_{\infty}$} quasi-categories},
        date={2015},
     journal={Homology Homotopy Appl.},
      volume={17},
      number={1},
       pages={121\ndash 128},
}

\bib{Lurie_HTT}{book}{
      author={Lurie, Jacob},
       title={Higher topos theory},
      series={Annals of Mathematics Studies},
   publisher={Princeton University Press, Princeton, NJ},
        date={2009},
      volume={170},
        ISBN={978-0-691-14049-0; 0-691-14049-9},
}

\bib{Lurie_HA}{misc}{
      author={Lurie, Jacob},
       title={Higher algebra},
        date={2016},
        note={Preprint, available at
  \url{http://www.math.harvard.edu/~lurie/}},
}

\bib{Pavlov-Scholbach-admissibility}{misc}{
      author={Pavlov, D.},
      author={Scholbach, J.},
       title={Admissibility and rectification of colored symmetric operads},
        date={2015},
        note={\arxivlink{1410.5675v2}},
}

\bib{Pavlov-Scholbach-symmetric-powers}{misc}{
      author={Pavlov, D.},
      author={Scholbach, J.},
       title={Homotopy theory of symmetric powers},
        date={2015},
        note={\arxivlink{1510.04969v2}},
}

\bib{Schwede_S-algebras}{article}{
      author={Schwede, Stefan},
      author={Shipley, Brooke~E.},
       title={Algebras and modules in monoidal model categories},
        date={2000},
        ISSN={0024-6115},
     journal={Proc. London Math. Soc. (3)},
      volume={80},
      number={2},
       pages={491\ndash 511},
         url={http://dx.doi.org/10.1112/S002461150001220X},
}

\bib{Sagave-S_diagram}{article}{
      author={Sagave, Steffen},
      author={Schlichtkrull, Christian},
       title={Diagram spaces and symmetric spectra},
        date={2012},
        ISSN={0001-8708},
     journal={Adv. Math.},
      volume={231},
      number={3-4},
       pages={2116\ndash 2193},
         url={http://dx.doi.org/10.1016/j.aim.2012.07.013},
}

\end{biblist}
\end{bibdiv}

\end{document}